\newtheorem{theorem}{Theorem}[section]
\newtheorem{lemma}[theorem]{Lemma}
\newtheorem{corollary}[theorem]{Corollary}
\newtheorem{question}[theorem]{Question}
\theoremstyle{definition}
\newtheorem{definition}[theorem]{Definition}
\newtheorem{example}[theorem]{Example}
\newtheorem{remark}[theorem]{Remark}
\numberwithin{equation}{section}
\newcommand{\C}{{\mathbb{C}}}
\newcommand{\CHI}{\hbox{\raise .4ex \hbox{$\chi$}}}
\newcommand{\R}{{\mathbb{R}}}
\newcommand{\Sb}{\mathbb{S}}
\newcommand{\E}{{\mathbb{E}}}
\begin{document}

\title{Randomized subspace actions and fusion frames}

\author{Xuemei Chen}
\address{University of Missouri, Department of Mathematics, Columbia, MO 65211}
\email{chenxuem@missouri.edu}


\author{Alexander M. Powell}
\address{Vanderbilt University, Department of Mathematics,
Nashville, TN 37240}
\email{alexander.m.powell@vanderbilt.edu}
\thanks{The authors were supported in part by NSF DMS Grant 1211687.}

\subjclass[2010]{42C15, 60J45, 65F10}

\date{\today}

\keywords{extremal probability measures, frame potential, fusion frames, Kaczmarz algorithm, potential function, randomization, subspace action methods}

\begin{abstract}
A randomized subspace action algorithm is investigated for fusion frame signal recovery problems and for the problem
of recovering a signal from projections onto random subspaces.
It is noted that Kaczmarz bounds provide upper bounds on the algorithm's error moments.  The main question
of which probability distributions on a random subspace lead to provably fast convergence is addressed.   In particular, it is proven which distributions give minimal Kaczmarz bounds, and hence give best control on error moment upper bounds arising from Kaczmarz bounds.
Uniqueness of the optimal distributions is also addressed.
\end{abstract}

\maketitle

\section{Introduction}

Fusion frames are a mathematical tool for distributed signal  processing and data fusion.
Complexity and computational constraints in high dimensional problems can limit the amount of global
processing that is possible and often require approaches that are built up from local processing.
For example, in wireless sensor networks, physical constraints on sensors mean that
global processing is typically organized through processors on local subnetworks.
Mathematically, a fusion frame provides global signal representations by fusing together projections
on local subspaces.  

Fusion frames were introduced in \cite{CK04} as a generalization of the classical notion of frames.
Let $H$ be a Hilbert space and let $I$ be an at most countable index set.
A collection $\{\varphi_n\}_{n\in I} \subset H$ is a {\em frame} for $H$ 
with {\em frame bounds} $0<A \leq B < \infty$ if 
\begin{equation} \label{frame-def}
\forall x \in H, \ \ \ A \|x\|^2 \leq \sum_{n \in I} | \langle x, \varphi_n \rangle |^2 \leq B \|x\|^2.
\end{equation}
The frame inequality \eqref{frame-def} ensures that the frame coefficients $\langle x, \varphi_n \rangle$
stably encode $x\in H$, and that there exists a (possibly nonunique)
dual frame $\{\psi_n\}_{n\in I} \subset H$ such that the following unconditionally convergent 
frame expansions hold
\begin{equation} \label{frame-exp}
\forall x \in H, \ \ \ x = \sum_{n\in I} \langle x, \varphi_n \rangle \psi_n 
= \sum_{n\in I} \langle x, \psi_n \rangle \varphi_n. 
\end{equation}
An important aspect of frame theory is that frames can be redundant or overcomplete.  Redundancy endows the frame expansions \eqref{frame-exp} with robustness properties that are useful in applications such as multiple description coding \cite{GKV98}, transmission of data over erasure channels \cite{GKK01, GKV99}, and quantization \cite{BPY}.  
See \cite{Cas, CK10} for an introduction to frame theory and its applications.

Fusion frames take the idea \eqref{frame-def} one step further and replace
the scalar frame coefficients  $\langle x, \varphi_n \rangle$ 
by projections onto a redundant collection of subspaces.
Let $\{W_n\}_{n\in I}$ be a collection of closed subspaces of $H$
and let $\{v_n\}_{n\in I} \subset (0,\infty)$ be a collection of positive weights.
The collection $\{(W_n, v_n)\}_{n\in I}$ is said to be a {\em fusion frame} for $H$ 
with {\em fusion frame bounds} $0<A \leq B < \infty$  if
\begin{equation} \label{ff-def}
\forall x \in H, \ \ \ A \|x\|^2 \leq \sum_{n\in I}  v_n^2 \| P_{W_n} (x)\|^2 \leq B \|x\|^2,
\end{equation}
where $P_W$ denotes the orthogonal projection onto a subspace $W \subset H$.  If $A=B$ then the fusion frame is said to be {\em tight}.
Note that in the special case when each $W_n$ is a one-dimensional subspace with $W_n = {\rm span} ( \varphi_n )$ and $v_n = \|\varphi_n\|$, the fusion frame inequality \eqref{ff-def} reduces to the statement \eqref{frame-def} that $\{\varphi_n\}_{n\in I}$ is a frame for $H$.  
For further background on fusion frames see \cite{CF09, CFMWZ, B13, CCL}.

If $\{(W_n, v_n)\}_{n\in I}$ is a fusion frame for $H$ then the associated {\em fusion frame operator} $S:H \to H$ is defined by
$$S(x) = \sum_{n\in I} v_n^2 P_{W_n}(x).$$
It is known, e.g., \cite{CK04, CKL08}, that $S$ is a positive invertible operator  and that each
$x\in H$ can be recovered from its fusion frame projections $y_n = P_{W_n}(x) \in H,$ $n \in I,$ by  
\begin{equation*}
x = (S^{-1}\circ S)(x)=\sum_{n\in I} v_n^2 (S^{-1} \circ P_{W_n})(x) =  \sum_{n\in I} v_n^2 S^{-1} (y_n).
\end{equation*}
Practical inversion of the fusion frame operator $S$ and, more generally, reconstructing $x\in \R^d$ from the projections $y_n = P_{W_n}(x)$, can be computationally intensive.

The following extension of the classical frame algorithm gives an iterative way to recover
$x\in H$ from its fusion frame projections $y_n = P_{W_n}(x)$, $n\in I$.
Given an arbitrary initial estimate $x_0\in H$, the algorithm
produces estimates $x_n \in H$ of $x$ from the projections $\{y_n\}_{n\in I}$ by iterating
for $n \geq 1$
\begin{equation} \label{ffalg}
x_n = x_{n-1} + \frac{2}{A+B} \left[ \left( \sum_{j\in I} v_j^2 y_j \right) - S(x_{n-1}) \right]
=x_{n-1} + \frac{2}{A+B} S(x-x_{n-1}).
\end{equation}
Similar to the situation for frames, it was shown in \cite{CKL08} that the fusion frame algorithm \eqref{ffalg} satisfies
$\lim_{n\to \infty} x_n =x$ with 
\begin{equation*}
\| x - x_n\| \leq \left( \frac{B-A}{B+A} \right)^n \|x\|.
\end{equation*}
For other iterative approaches to fusion frame reconstruction
and a discussion of local versus global aspects of reconstruction see \cite{CKL08, KA07, O}.

\subsection{Recovery by iterative subspace actions}

This article will analyze an algorithm, motivated by row-action methods, for recovering a signal from projections onto subspaces. 
This algorithm can be directly used as a fusion frame recovery algorithm.
We restrict our attention to the finite dimensional setting and 
let $\mathbb{H}^d$ be the $d$-dimensional Hilbert space with ${\mathbb{H}} = \R$ or $\C$.  
Suppose that $\{W_n\}_{n=1}^{\infty}$ is
a given collection of subspaces of $\mathbb{H}^d$. The goal is to recover
$x \in \mathbb{H}^d$ from the set of projections $y_n = P_{W_n}(x)$, with $n \geq1$.

We focus on the following iterative algorithm.  Let $x_0 \in \mathbb{H}^d$ be an arbitrary initial
estimate.  Produce updated estimates $x_n \in \mathbb{H}^d$ for $x\in \mathbb{H}^d$ by running the following iteration for $n\geq 1$
\begin{equation} \label{ff-kacz}
x_n=x_{n-1}+y_n - P_{W_n}(x_{n-1}).
\end{equation}
The algorithm \eqref{ff-kacz} dates at least back to \cite{E80} in the context of block-iterative methods for linear equations.
Note that in contrast to \eqref{ffalg},  each iteration of \eqref{ff-kacz} only acts on a single subspace 
$W_n$ and a single measurement $y_n$, and for this reason, we shall refer to \eqref{ff-kacz} as a {\em subspace action method}. 

Algorithms of the type \eqref{ff-kacz} have a long history.  
Geometrically, \eqref{ff-kacz} is an example of a projection onto convex sets (POCS) algorithm,
and in particular $x_n$ is simply the orthogonal projection of $x_{n-1}$ onto the convex set 
$\{ u \in \mathbb{H}^d : P_{W_n}(u) = y_n \}$.  The algorithm \eqref{ff-kacz} also falls into the class of block iterative methods related to the Kaczmarz algorithm, e.g., \cite{E80, NT}.  For example, if each $W_n = {\rm span}(\varphi_n)$ is one dimensional, then $P_{W_n}(x) = \varphi_n \langle x, \varphi_n \rangle / \|\varphi_n\|^2$ and \eqref{ff-kacz}
reduces to the familiar Kaczmarz algorithm for recovering $x\in \R^d$ from the linear measurements $y_n = \langle x, \varphi_n \rangle, n \geq 1$,
\begin{equation} \label{kacz-algorithm}
x_n = x_{n-1} + \frac{y_n - \langle x_{n-1}, \varphi_n \rangle}{\|\varphi_n\|^2} \varphi_n,
\end{equation}
for example, see \cite{K, SV09}.  Randomized versions of this algorithm were recently studied for two-dimensional subspaces in \cite{NW} and for general subspaces in \cite{NT}.

The following examples illustrate different scenarios for implementing and analyzing the subspace action method \eqref{ff-kacz}.
The underlying algorithm is the same in each example, but there are illustrative differences in perspective regarding how the algorithm is applied.  

\begin{example} [Cyclic subspace actions for fusion frames]  \label{example-cyclic-ff}
Let $\{(U_n, v_n)\}_{n=1}^N$ be a  fusion frame for $\mathbb{H}^d$
and suppose that one wants to recover $x\in \mathbb{H}^d$ from the measurements $y_n = P_{U_n}(x)$, $1\leq n \leq N$.
Define the infinite collection of subspaces $\{W_n\}_{n=1}^{\infty}$ in $\mathbb{H}^d$ by 
$W_n = U_{p(n)}$, where  $p(n) \in \{1, 2, \cdots, N\}$ satisfies $p(n) \equiv n$ \ (modulo $N$).
When the algorithm \eqref{ff-kacz} acts on $\{W_n\}_{n=1}^\infty$, this corresponds to cyclicly iterating on the finite collection of deterministic fusion frame subspaces
$\{U_n\}_{n=1}^N$.  This is comparable to the classical Kacmarz algorithm with cyclic control in \cite{K}.
\end{example}

\begin{example} [Randomized subspace actions for fusion frames] \label{example-randomized-ff}
Let $\{(U_n, v_n)\}_{n=1}^N$  be a fusion frame for $\mathbb{H}^d$
and suppose that one wants to recover $x\in \mathbb{H}^d$ from the measurements $y_n = P_{U_n}(x)$, $1\leq n \leq N$.
Define the infinite collection of subspaces $\{W_n\}_{n=1}^{\infty} \subset \mathbb{H}^d$ by $W_n = U_{r(n)},$ where the random numbers $\{r(n)\}_{n=1}^{\infty}$ are drawn independently according to a given probability distribution on $\{1, 2, \cdots, N\}$.  This corresponds to iterating \eqref{ff-kacz} with a random
selection from $\{U_n\}_{n=1}^N$ at each step, cf. Example \ref{rand-det-exam} and Remark \ref{remark-rand-alg-specialized}.
This is comparable to the randomized Kaczmarz algorithm recently studied in \cite{SV09}.
\end{example}

\begin{example} [Subspace actions for i.i.d. random subspaces] \label{example-random-subspace}
Let $W$ be a random subspace of $\mathbb{H}^d$.  Suppose $\{W_n\}_{n=1}^{\infty}$ are independent identically distributed (i.i.d.) versions of $W$, and suppose that
one wants to recover $x\in \mathbb{H}^d$ from the measurements $y_n = P_{W_n}(x)$, $n \geq 1$.  
Here, the subspace actions \eqref{ff-kacz} could be viewed as an online algorithm when the measurements $y_n$ are received in a streaming manner. 
Moreover, this general setting contains, as a special case, Example \ref{example-randomized-ff} on randomized subspace actions for fusion frames.  In particular, 
Example \ref{example-randomized-ff} is the special case when $W$ is a discrete random subspace taking values in a given fusion frame $\{U_n\}_{n=1}^N$.  
Lastly, the generality of this setting is convenient for analysis and contributes to the general understanding of the algorithm.
This is comparable to the framework in \cite{CP}.
\end{example}

It is useful to note differences between the fusion frame algorithm \eqref{ffalg}
and the subspace action method \eqref{ff-kacz} in the fusion frame setting.  The algorithm \eqref{ffalg} requires access to the entire fusion frame system $\{(W_n, v_n)\}_{n\in I}$ and the full set of projections $\{y_n\}_{n\in I}$ at each iteration and also requires some knowledge of the fusion frame bounds $A,B$.  
For high dimensional problems memory issues might pose challenges to storing or using the entire fusion frame system at once.
Moreover, in practice, one might receive access to the fusion frame measurements $y_n = P_{W_n}(x)$ in a streaming manner for which an online algorithm such as \eqref{ff-kacz} might be suitable.

Unfortunately, unlike \eqref{ffalg}, the algorithm \eqref{ff-kacz} is sensitive to the order in which it receives the inputs $y_n = P_{W_n}(x)$, and an inappropriate
ordering of the subspaces can lead to poor performance.  For example, there are instances where the cyclic ordering in Example \ref{example-cyclic-ff} is highly suboptimal.
Recent work on the randomized Kaczmarz algorithm in \cite{SV09} indicates that
a randomized selection of the $y_n=P_{W_n}(x)$, as in Example \ref{example-randomized-ff}, can circumvent such ordering issues and leads to fast convergence.

This paper is motivated by the randomized subspace action method \eqref{ff-kacz} 
for (deterministic) fusion frames $\{(U_n, v_n)\}_{n=1}^N$ in Example \ref{example-randomized-ff}, 
but our analysis focuses on the more general setting of i.i.d. random subspaces $\{W_n\}_{n=1}^{\infty}$ as in Example \ref{example-random-subspace}.
For example, the error bounds in Section \ref{bounds-sec} are stated for the setting of i.i.d. random subspaces, 
but can be specialized to apply to the randomized subspace action method for fusion frames, e.g., see Remark \ref{remark-rand-alg-specialized}.

\subsection{Overview and main results}

The main goal of this work is to provide error bounds for the subspace action algorithm \eqref{ff-kacz} when it is driven by random subspaces $\{W_n\}_{n=1}^{\infty}$,
and to determine which choices of randomization lead to fast convergence in \eqref{ff-kacz}.  The main contributions of this work can be summarized as follows:
\begin{enumerate}
\item As necessary background lemmas, the error bounds of \cite{CP} are extended to the setting of fusion frames and, in particular, they provide bounds on error moments and almost sure convergence rates for the algorithm \eqref{ff-kacz}.
\item We address the following main question.  For which choices of i.i.d. random $k$-dimensional subspaces $\{ W_n\}_{n=1}^{\infty}$ of $\mathbb{H}^d$ does the algorithm \eqref{ff-kacz} converge quickly?  Specifically, we describe random subspaces with minimal Kaczmarz bounds (the Kaczmarz bounds in turn provide upper bounds for the algorithm's error moments).  Uniqueness of minimizers is discussed in special cases.
\end{enumerate}

The paper is organized as follows.  In Section \ref{rand-subspace-sec} we provide necessary background on random subspaces and Kaczmarz bounds.
Section \ref{bounds-sec} provides basic error bounds for the subspace action algorithm \eqref{ff-kacz} when the subspaces $\{ W_n\}_{n=1}^{\infty}$ are i.i.d. versions
of a random subspace $W$; these results are immediate generalizations of the error bounds in \cite{CP} and give upper bounds on error moments of \eqref{ff-kacz}
in terms of Kaczmarz bounds of $W$.  In contrast to later sections, the results in Sections \ref{rand-subspace-sec} and \ref{bounds-sec} do not assume that the $\{W_n\}_{n=1}^{\infty}$ have the same dimensions.

In Section \ref{opt-dist-sec} we address which distributions on a $k$-dimensional random subspace $W$ have minimal Kaczmarz bounds.
These distributions in turn lead to the smallest upper bounds on the error moments obtained in Section \ref{bounds-sec}.
Our first main results, Theorem \ref{thm_argmin} and Corollary \ref{cor_argmin}, show that a distribution achieves the minimal Kaczmarz bound precisely when its Kaczmarz
bound satisfies a probabilistic tightness condition that is analogous to the notion of tight fusion frames. 
In particular, the invariant measure on the Grassmannian $G(k,d)$ is a minimizer for both the Kaczmarz bound of order $s$ and the logarithmic Kaczmarz bound.
In Section \ref{unique-sec} we address uniqueness of the minimizing distributions from Section \ref{opt-dist-sec}; Theorem \ref{thm:unique} shows that the invariant
measure on $G(1,d)$ uniquely minimizes both the logarithmic Kaczmarz bound and the Kaczmarz bound of order $s$ when $0<s<1$ 
(but it is not generally unique when $s=1$).
Section \ref{Kacz-section-at-end} briefly discusses how the main results relate to the Kaczmarz algorithm.
Section \ref{numerics-sec} provides numerical experiments and examples to illustrate our results.

\section{Random subspaces and Kaczmarz bounds} \label{rand-subspace-sec}

We shall primarily be interested in the performance of the algorithm \eqref{ff-kacz} when the 
spaces $\{W_n\}_{n=1}^{\infty} \subset \mathbb{H}^d$ are randomly chosen. In this section,
we provide some necessary background and notation related to random subspaces.

The Grassmannian $G(k,d)=G(k,\mathbb{H}^d)$ is the set of all $k$-dimensional subspaces of $\mathbb{H}^d$.  
Let $\mathcal{G} = \cup_{k=0}^d G(k,d)$ denote the collection of all subspaces of $\mathbb{H}^d$,
let $\mathcal{A}$ be a $\sigma$-algebra of subsets of $\mathcal{G}$, and let $\mathcal{P}$
be a probability measure on $\mathcal{A}$.  Let $\mathbb{S}^{d-1}=\mathbb{S}^{d-1}_{\mathbb{H}}=\{x \in \mathbb{H}^d: \|x\|=1\}$ denote the unit-sphere
in $\mathbb{H}^d$.
To begin, we simply assume
$W$ is a random subspace defined on the probability space $(\mathcal{G}, \mathcal{A}, \mathcal{P})$.
The error bounds in Section \ref{bounds-sec} require the following quantitative notion of how nondegenerate a random subspace is.

\begin{definition} \label{kacz-bnd-def}
Given $s>0$,  the {\em Kaczmarz bound $0 \leq \alpha_s \leq 1$ of order $s$} for a random subspace $W \subset \mathbb{H}^d$ is defined as
$$\alpha_s=\sup_{x \in \mathbb{S}^{d-1}}\left( \mathbb{E} \left[ \left( 1 - \|P_W(x)\|^2 \right)^s \right] \right)^{1/s}.$$
The {\em logarithmic Kaczmarz bound} $0 \leq \alpha_{\log} \leq 1$ for a random subspace $W \subset \mathbb{H}^d$ is defined as
$$\alpha_{\log}=\sup_{x \in \mathbb{S}^{d-1}}{\rm exp} \left( \mathbb{E} \left[ \log \left( 1 - \|P_W(x)\|^2 \right) \right]  \right).$$
In particular, 
\begin{equation} \label{kacz-def}
\forall x \in \mathbb{S}^{d-1}, \ \ \  
\left( \mathbb{E} \left[ \left( 1 - \|P_W(x)\|^2 \right)^s \right] \right)^{1/s} \leq \alpha_s,
\end{equation}
and
\begin{equation} \label{log-kacz-def}
\forall x \in \mathbb{S}^{d-1}, \ \ \  
{\rm exp} \left( \mathbb{E} \left[ \log \left( 1 - \|P_W(x)\|^2 \right) \right]  \right)\leq \alpha_{\log}.
\end{equation}
We shall say that the Kaczmarz bound or logarithmic Kaczmarz bound is {\em tight} if equality
holds in \eqref{kacz-def} or \eqref{log-kacz-def} respectively.
\end{definition}

See \cite{CP} for further discussion of Kaczmarz bounds.
The motivation for the logarithmic Kaczmarz bound \eqref{log-kacz-def} comes from considering the limit as $s\to0$ in \eqref{kacz-def}, see Lemma \ref{log-lim-lemma}.
\begin{lemma}  \label{log-lim-lemma}
Let $X$ be a random variable.  If $s_1 \geq s_2>0$ then
\begin{equation} \label{lyap-ineq}
\left( \mathbb{E} |X|^{s_2} \right)^{1/s_2} \leq \left( \mathbb{E} |X|^{s_1} \right)^{1/s_1}.
\end{equation}
Moreover, if $\mathbb{E}|X|^s<\infty$ for some $s>0$ then
\begin{equation} \label{log-lim}
\inf_{s>0} \left( \mathbb{E}|X|^s \right)^{1/s} =\lim_{s\to 0} \left( \mathbb{E}|X|^s \right)^{1/s}
= {\rm exp} \left( \mathbb{E} \log |X| \right).
\end{equation}
\end{lemma}
The inequality \eqref{lyap-ineq} is known as {\em Lyapunov's inequality}, see page 193 in \cite{S}, and the limit \eqref{log-lim} can, for example, be found on page 71 in \cite{R}.

\begin{example}[Randomized subspace action method for fusion frames] \label{rand-det-exam}

Let $\{(U_n, v_n)\}_{n=1}^N$ be a fusion frame for $\mathbb{H}^d$ with fusion frame bounds $0<A\leq B < \infty$,
and let $W$ be the random subspace defined by
\begin{equation*}\label{W-rand-det}
\forall 1 \leq k \leq N, \ \ \ \Pr [ W = U_k ] = \frac{v_k^2}{\sum_{n=1}^N v_n^2}.
\end{equation*}
This randomization of $\{(U_n,v_n)\}_{n=1}^N$ can be viewed as an instance of Example \ref{example-randomized-ff}.
Then $W$ has a Kaczmarz bound $\alpha_1$ of order $s=1$ which satisfies
\begin{equation*} \label{rand-det-alpha-one}
\alpha_1 \leq 1 - \frac{A}{\sum_{n=1}^N v_n^2}<1.
\end{equation*}
In the special case when $\{(U_n, v_n)\}_{n=1}^N$ is a tight fusion frame then the fusion frame bounds are given by
$A = B= \frac{1}{d} \sum_{n=1}^N v_n^2 \thinspace {\rm dim}(W_n)$, e.g., see Chapter 13 in \cite{CK10},
and in this case the random subspace $W$ has a tight Kaczmarz bound of order $s=1$ given by
$$\alpha_1 = 1 - \frac{\sum_{n=1}^N v_n^2 \thinspace {\rm dim}(W_n)}{d \sum_{n=1}^N v_n^2}.$$
\end{example}

\begin{example} \label{onb-kacz-bnd-example}
Let $\{e_n\}_{n=1}^d$ be an orthonormal basis for $\mathbb{H}^d$, which is also a frame. 
It is easy to verify that a cyclic implementation of the Kaczmarz algorithm \eqref{kacz-algorithm}, as in Example \ref{example-cyclic-ff}, converges to the solution $x$ in at most $d$ iterations. But for the sake of illustration, we shall compute the Kaczmarz bound when we choose the randomized Kaczmarz algorithm with the random subspace $W$ defined by
\begin{equation} \label{onb-kacz-bnd-example-defeq}
\forall 1 \leq n \leq d, \ \ \ {\rm Pr}[ W = {\rm span}(e_n)] = 1/d.
\end{equation}
Then
\begin{equation} \label{onb-exam-eq1}
\left( \mathbb{E} \left( 1 - \|P_W(x)\|^{2}\right)^{s} \right)^{1/s}= \left( \frac{1}{d} \sum_{n=1}^d \left( 1 - |\langle x, e_n \rangle|^{2}\right)^s \right)^{1/s}.
\end{equation}

When $0<s<1$, it can be verified that that supremum of \eqref{onb-exam-eq1} over all $x\in \mathbb{S}^{d-1}$ equals $(1-1/d)$ and is, for example, attained by
$x =d^{-1/2} \sum_{n=1}^d e_n$.  So, when $0<s<1$, $W$ has the Kaczmarz bound of order $s$ given by $\alpha_s = (1 - 1/d)$.

When $1 \leq s < \infty$, it can be verified that that supremum of \eqref{onb-exam-eq1} over all $x\in \mathbb{S}^{d-1}$ equals $(1 - 1/d)^{1/s}$ and is, for example, attained by $x =e_1$.  So, when $1 \leq s < \infty$, $W$ has the Kaczmarz bound of order $s$ given by $\alpha_s =(1 - 1/d)^{1/s}$.

\end{example}

\section{Error bounds} \label{bounds-sec}

This section states error bounds for the subspace action algorithm \eqref{ff-kacz}.  The results of this
section are directly motivated by analogous results in \cite{CP} for the standard Kaczmarz algorithm.
We omit most proofs in this section since they are almost identical to their Kaczmarz 
counterparts in \cite{CP}.

The following basic error bound is proven in the same manner as Proposition 3.1 in \cite{CP}. Notice that \eqref{ff-kacz} implies $x-x_n=P_{W_n^{\perp}}(x-x_{n-1})$, hence \eqref{error-nonorm}.
\begin{lemma}  The error in the algorithm \eqref{ff-kacz} satisfies
\begin{equation} \label{error-nonorm}
(x-x_n) = P_{W_n^{\perp}} \cdots  P_{W_2^{\perp}}  P_{W_1^{\perp}}(x - x_0)
\end{equation}
and
\begin{equation*} 
\|x - x_n\|^2 = \|x- x_{n-1}\|^2 - \| P_{W_n} ( x - x_{n-1})\|^2,
\end{equation*}
and
\begin{equation*} 
\|x - x_n\|^2 = \|x - x_0\|^2 \prod_{k=1}^n 
\left( 1 - \left\| P_{W_k} \left( \frac{ x-x_{k-1}}{\|x-x_{k-1}\|} \right) \right\|^2 \right).
\end{equation*}
\end{lemma}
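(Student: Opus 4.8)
The plan is to treat the three displayed identities in sequence, each following quickly from the one before. The starting point is the single-step error recursion already recorded before the statement: substituting $y_n = P_{W_n}(x)$ into the update \eqref{ff-kacz} gives $x_n = x_{n-1} + P_{W_n}(x - x_{n-1})$, so that
\begin{equation*}
x - x_n = (x - x_{n-1}) - P_{W_n}(x - x_{n-1}) = P_{W_n^{\perp}}(x - x_{n-1}),
\end{equation*}
where $P_{W_n^{\perp}} = I - P_{W_n}$. First I would establish \eqref{error-nonorm} by a straightforward induction on $n$: the base case $n=1$ is exactly the recursion above, and the inductive step applies $P_{W_n^{\perp}}$ to both sides of the identity for $n-1$, prepending the new factor to the existing composition.

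Next, for the second identity I would exploit the orthogonality built into the recursion. Since $W_n$ and $W_n^{\perp}$ are orthogonal complements, the vectors $P_{W_n}(x - x_{n-1})$ and $P_{W_n^{\perp}}(x - x_{n-1})$ are orthogonal and sum to $x - x_{n-1}$, so the Pythagorean theorem yields $\|x - x_{n-1}\|^2 = \|P_{W_n}(x - x_{n-1})\|^2 + \|P_{W_n^{\perp}}(x - x_{n-1})\|^2$. Combining this with $\|x - x_n\|^2 = \|P_{W_n^{\perp}}(x - x_{n-1})\|^2$, which is just the norm-squared of the single-step recursion, gives $\|x - x_n\|^2 = \|x - x_{n-1}\|^2 - \|P_{W_n}(x - x_{n-1})\|^2$ immediately.

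Finally, for the product formula I would rewrite the second identity in multiplicative form and iterate. Assuming $\|x - x_{n-1}\| \neq 0$, dividing out $\|x-x_{n-1}\|^2$ and using the linearity of $P_{W_n}$ gives
\begin{equation*}
\|x - x_n\|^2 = \|x - x_{n-1}\|^2 \left( 1 - \left\| P_{W_n}\left( \frac{x - x_{n-1}}{\|x - x_{n-1}\|} \right) \right\|^2 \right),
\end{equation*}
and telescoping this relation from $k = n$ down to $k = 1$ produces the claimed product against $\|x - x_0\|^2$. The only point requiring care — and the nearest thing to an obstacle in an otherwise elementary argument — is the normalization step, since the factored form is undefined whenever $\|x - x_{k-1}\| = 0$. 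I would handle this by noting that if $x - x_{k-1} = 0$ for some $k$, then $x - x_j = 0$ for all $j \geq k-1$ by the single-step recursion, so both sides of the product identity vanish and the formula holds trivially; otherwise every denominator is nonzero and the telescoping is valid.
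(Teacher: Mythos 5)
Your proposal is correct and follows exactly the route the paper indicates: the paper only records the single-step recursion $x-x_n=P_{W_n^{\perp}}(x-x_{n-1})$ and defers the rest to Proposition 3.1 of the cited reference, and your induction, Pythagorean step, and telescoping are the standard completion of that sketch. Your handling of the degenerate case $\|x-x_{k-1}\|=0$ is a welcome extra bit of care that the paper glosses over.
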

\vspace{.1in}

The next theorem is proven in the same manner as Theorem 4.1 in \cite{CP}.

\begin{theorem} \label{moment-err-thm}
Let $\{W_n\}_{n=1}^{\infty}$ be independent random subspaces of $\mathbb{H}^d$.  Assume that each $W_n$ has the common Kaczmarz bound $0\leq\alpha_s\leq1$ of order $s$.  The error of the subspace action algorithm \eqref{ff-kacz} satisfies
\begin{equation}\label{moment-err-bnd}
\left( \E\|x-x_n\|^{2s} \right)^{1/s} \leq \alpha_s^{n}\|x-x_0\|^{2}.
\end{equation}
Moreover, if the common Kaczmarz bound is tight, then equality holds in \eqref{moment-err-bnd}.
\end{theorem}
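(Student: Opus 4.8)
The plan is to start from the multiplicative error identity in the preceding lemma, namely
\begin{equation*}
\|x - x_n\|^2 = \|x - x_0\|^2 \prod_{k=1}^n \left( 1 - \left\| P_{W_k} (u_{k-1}) \right\|^2 \right), \qquad u_{k-1} := \frac{x - x_{k-1}}{\|x-x_{k-1}\|} \in \mathbb{S}^{d-1},
\end{equation*}
and to raise it to the power $s$ so that the random product
\begin{equation*}
Z_n := \prod_{k=1}^n \left( 1 - \|P_{W_k}(u_{k-1})\|^2 \right)^s
\end{equation*}
satisfies $\|x-x_n\|^{2s} = \|x-x_0\|^{2s} \, Z_n$. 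The whole theorem then reduces to proving $\E[Z_n] \le \alpha_s^{ns}$, with equality in the tight case, after which the conclusion follows by taking $1/s$ powers.

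To estimate $\E[Z_n]$ I would condition on the past. Let $\mathcal{F}_{k} = \sigma(W_1, \dots, W_{k})$ be the filtration generated by the subspaces. The key structural observation is that the unit vector $u_{k-1}$ is determined by $x$, $x_0$ and $W_1, \dots, W_{k-1}$ through the iteration \eqref{ff-kacz}, hence $u_{k-1}$ is $\mathcal{F}_{k-1}$-measurable, whereas $W_k$ is independent of $\mathcal{F}_{k-1}$. Consequently, conditioning on $\mathcal{F}_{n-1}$ freezes both $Z_{n-1}$ and $u_{n-1}$ and, by independence, the conditional expectation of the last factor is obtained by evaluating the deterministic-argument Kaczmarz functional at the frozen unit vector:
\begin{equation*}
\E\!\left[ \left(1 - \|P_{W_n}(u_{n-1})\|^2\right)^s \,\middle|\, \mathcal{F}_{n-1} \right] = \left. \E\!\left[\left(1 - \|P_{W_n}(v)\|^2\right)^s\right]\right|_{v = u_{n-1}} \le \alpha_s^s,
\end{equation*}
where the inequality is exactly \eqref{kacz-def} applied with the unit vector $v$. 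Since $Z_{n-1}$ is $\mathcal{F}_{n-1}$-measurable, the tower property yields $\E[Z_n] = \E\big[Z_{n-1} \, \E[(\cdots)\mid \mathcal{F}_{n-1}]\big] \le \alpha_s^s \, \E[Z_{n-1}]$, and iterating this down to the empty product $Z_0 = 1$ gives $\E[Z_n] \le \alpha_s^{ns}$, which is the desired moment bound.

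For the equality statement, I would observe that tightness of the Kaczmarz bound means equality holds in \eqref{kacz-def} for every $x \in \mathbb{S}^{d-1}$, so the displayed conditional expectation above is identically equal to $\alpha_s^s$ regardless of the frozen value of $u_{n-1}$. Hence every inequality in the iteration becomes an equality and $\E[Z_n] = \alpha_s^{ns}$ exactly. The main technical obstacle is the conditioning step: one must justify that the \emph{random} unit vectors $u_{k-1}$ may be treated as deterministic inputs to the Kaczmarz bound, which rests on the $\mathcal{F}_{k-1}$-measurability of $u_{k-1}$ together with the independence of $W_k$ from $\mathcal{F}_{k-1}$ (this is where the independence hypothesis is used). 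A minor point requiring care is the degenerate event $\{x_{k-1} = x\}$ on which $u_{k-1}$ is undefined; there the product $Z_n$ has already vanished through an earlier factor, so one sets the undefined factors to any value in $[0,1]$ without affecting the identity, and the upper bound $\E[Z_n] \le \alpha_s^{ns}$ is unaffected.
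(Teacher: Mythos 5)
Your proof is correct and follows essentially the same route as the paper's (which defers to the proof of Theorem 4.1 in \cite{CP}): start from the multiplicative error identity, condition on $\mathcal{F}_{n-1}=\sigma(W_1,\dots,W_{n-1})$ so that the $\mathcal{F}_{n-1}$-measurable unit vector $u_{n-1}$ can be frozen and the independence of $W_n$ invoked to bound the last factor by $\alpha_s^s$ via \eqref{kacz-def}, then iterate with the tower property. Your handling of the tight case and of the degenerate event $\{x_{k-1}=x\}$ is also sound, since the product has already vanished there and the choice of the undefined factors is immaterial.
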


The next result shows that the subspace action algorithm \eqref{ff-kacz} remains robust when it only has access to noisy measurements $y^*_n = P_{W_n}(x) + \epsilon_n$.
The proof is similar to the work in \cite{N10} for the randomized Kaczmarz algorithm, but for the sake of completeness we include a proof in the Appendix.
The assumption $\epsilon_n\in W_n$ is reasonable when one has precise knowledge of $W_n$, since the noisy measurements $y_n^*$ can be projected onto $W_n$ during reconstruction and hence can be replaced, without loss of generality, by $P_{W_n}(y_n^*)=P_{W_n}(x)+P_{W_n}(\epsilon_n)$.

\begin{theorem}\label{thm:noise}Let $\{W_n\}_{n=1}^{\infty}$ be independent random subspaces of $\mathbb{H}^d$.  Assume that each $W_n$ has the common Kaczmarz bound $0<\alpha_s<1$ of order $s>0$.  Let $x_n^*$ be the iterations generated by 
the subspace action method $x_n^* = x^*_{n-1} +y_n^* -P_{W_n}(x_{n-1}^*)$
using the noisy measurements $y_n^*=P_{W_n}(x)+\epsilon_n$ with $\epsilon_n\in W_n$ and $\|\epsilon_n\| \leq \epsilon$. Then
$$\forall \thinspace 0<s \leq 1, \ \ \ \E\|x_n^*-x\|^{2s}\leq \alpha_s^{ns}\|x^*_0-x\|^{2s}+ \left( \frac{1}{1-\alpha_s^s} \right) \epsilon^{2s},$$
and
$$\forall s\geq 1, \ \ \ \left( \E\|x_n^*-x\|^{2s} \right)^{1/s} \leq \alpha_s^{n}\|x^*_0-x\|^{2}+ \left( \frac{1}{1-\alpha_s} \right) \epsilon^{2}.$$
\end{theorem}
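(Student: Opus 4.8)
The plan is to derive a single one-step error recursion and then iterate it, handling the two ranges of $s$ with different convexity tools. Substituting $y_n^*=P_{W_n}(x)+\epsilon_n$ into the noisy iteration gives $x-x_n^*=P_{W_n^\perp}(x-x_{n-1}^*)-\epsilon_n$. Because $\epsilon_n\in W_n$ is orthogonal to $P_{W_n^\perp}(x-x_{n-1}^*)\in W_n^\perp$, the Pythagorean theorem yields
$$\|x-x_n^*\|^2=\|P_{W_n^\perp}(x-x_{n-1}^*)\|^2+\|\epsilon_n\|^2=\beta_n\,\|x-x_{n-1}^*\|^2+\|\epsilon_n\|^2,$$
where $\beta_n=1-\|P_{W_n}(u_{n-1})\|^2\in[0,1]$ and $u_{n-1}=(x-x_{n-1}^*)/\|x-x_{n-1}^*\|$ is a unit vector. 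The crucial structural observation is that $u_{n-1}$ is determined by the first $n-1$ iterations and hence is independent of $W_n$; letting $\mathcal{F}_{n-1}$ denote the $\sigma$-algebra generated by these iterations, conditioning on $\mathcal{F}_{n-1}$ and applying the Kaczmarz bound of order $s$ from Definition \ref{kacz-bnd-def} at the $\mathcal{F}_{n-1}$-measurable unit vector $u_{n-1}$ gives $\E[\beta_n^s\mid\mathcal{F}_{n-1}]\leq\alpha_s^s$.

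For $0<s\leq1$ I would use the subadditivity inequality $(a+b)^s\leq a^s+b^s$ on the recursion to obtain $\|x-x_n^*\|^{2s}\leq\beta_n^s\,\|x-x_{n-1}^*\|^{2s}+\|\epsilon_n\|^{2s}$. Taking the conditional expectation on $\mathcal{F}_{n-1}$, using $\E[\beta_n^s\mid\mathcal{F}_{n-1}]\leq\alpha_s^s$ together with $\|\epsilon_n\|^{2s}\leq\epsilon^{2s}$, and then applying the tower property produces the scalar recursion $a_n\leq\alpha_s^s\,a_{n-1}+\epsilon^{2s}$ for $a_n=\E\|x-x_n^*\|^{2s}$. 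Iterating this geometric recursion from the deterministic initial value $a_0=\|x_0^*-x\|^{2s}$ and bounding the partial sum $\sum_{k=0}^{n-1}\alpha_s^{ks}$ by $(1-\alpha_s^s)^{-1}$ gives the first claimed bound.

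For $s\geq1$ subadditivity fails, so instead I would apply Minkowski's inequality in $L^s$ directly to the recursion, writing $(\E\|x-x_n^*\|^{2s})^{1/s}\leq(\E[\beta_n^s\,\|x-x_{n-1}^*\|^{2s}])^{1/s}+(\E\|\epsilon_n\|^{2s})^{1/s}$. Since $\|x-x_{n-1}^*\|^{2s}$ is $\mathcal{F}_{n-1}$-measurable, the same conditioning step bounds the first term by $\alpha_s(\E\|x-x_{n-1}^*\|^{2s})^{1/s}$, while the second term is at most $\epsilon^2$; writing $A_n=(\E\|x-x_n^*\|^{2s})^{1/s}$ thus yields $A_n\leq\alpha_s A_{n-1}+\epsilon^2$, which iterates to the second bound exactly as before. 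The main subtlety throughout is the conditioning argument—verifying that $u_{n-1}$ is independent of $W_n$ so that the supremum defining $\alpha_s$ legitimately applies at the random unit vector $u_{n-1}$—together with correctly splitting the two regimes of $s$ between the subadditive bound and Minkowski's inequality.
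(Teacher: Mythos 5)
Your proposal is correct and follows essentially the same route as the paper's proof: the Pythagorean identity $\|x_n^*-x\|^2=\|P_{W_n^\perp}(x-x_{n-1}^*)\|^2+\|\epsilon_n\|^2$ from $\epsilon_n\in W_n$, subadditivity of $t\mapsto t^s$ for $0<s\leq 1$ versus Minkowski's inequality for $s\geq 1$, and iteration of the resulting geometric recursion. The only cosmetic difference is that you make the conditioning on $\mathcal{F}_{n-1}$ explicit where the paper phrases the same step as an iterated expectation $\E_{W_1,\dots,W_{n-1}}\E_{W_n}$ via an appeal to the one-step case of Theorem \ref{moment-err-thm}.
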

\begin{proof}
See Appendix.
\end{proof}

The next result follows from Theorem \ref{moment-err-thm}, Lemma \ref{log-lim-lemma} and 
Definition \ref{kacz-bnd-def}. The proof is similar to the proof of Theorem 6.3 in \cite{CP}, but we include
details in the Appendix for the sake of completeness.
\begin{theorem} \label{log-moment-err-thm}
Suppose that the random subspaces $\{ W_n\}_{n=1}^{\infty}$ of $\mathbb{H}^d$ are independent and identically distributed versions of a random subspace $W$ that has the logarithmic Kaczmarz bound 
$0\leq\alpha_{\log}\leq1$.  The error of the subspace action algorithm \eqref{ff-kacz} satisfies
\begin{equation}\label{log-moment-err-bnd}
{\rm exp} \left( \mathbb{E} [\log \|x - x_n\|^2] \right) \leq \alpha_{\log}^n \|x-x_0\|^{2}.
\end{equation}
\end{theorem}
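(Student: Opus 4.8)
The plan is to pass from the multiplicative error identity to an additive one by taking logarithms, and then to control each resulting summand using the defining inequality \eqref{log-kacz-def} of the logarithmic Kaczmarz bound together with a conditioning argument on the algorithm's history. This is the logarithmic analogue of the proof of Theorem \ref{moment-err-thm}, with Lemma \ref{log-lim-lemma} supplying the conceptual reason that $\alpha_{\log}$ is the correct $s \to 0$ limit of the order-$s$ bounds.

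First I would invoke the product identity from the lemma opening this section, namely
\[
\|x - x_n\|^2 = \|x - x_0\|^2 \prod_{k=1}^n \left( 1 - \|P_{W_k}(u_{k-1})\|^2 \right), \qquad u_{k-1} := \frac{x - x_{k-1}}{\|x - x_{k-1}\|},
\]
and take logarithms to obtain
\[
\log \|x - x_n\|^2 = \log \|x - x_0\|^2 + \sum_{k=1}^n \log\left( 1 - \|P_{W_k}(u_{k-1})\|^2 \right).
\]

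The key structural observation is that the iterate $x_{k-1}$, and hence the unit vector $u_{k-1}$, is determined by $x$, $x_0$, and $W_1, \ldots, W_{k-1}$; it is therefore measurable with respect to $\mathcal{F}_{k-1} := \sigma(W_1, \ldots, W_{k-1})$, whereas $W_k$ is independent of $\mathcal{F}_{k-1}$ and distributed as $W$. Conditioning on $\mathcal{F}_{k-1}$ freezes $u_{k-1}$ to a deterministic unit vector, so the conditional expectation of the $k$-th summand equals $\E[\log(1 - \|P_W(v)\|^2)]$ evaluated at $v = u_{k-1}$. By \eqref{log-kacz-def} this is at most $\log \alpha_{\log}$ for every unit vector, whence
\[
\E\left[ \log\left( 1 - \|P_{W_k}(u_{k-1})\|^2 \right) \,\big|\, \mathcal{F}_{k-1} \right] \le \log \alpha_{\log} \quad \text{almost surely.}
\]
Taking expectations and using the tower property yields $\E[\log(1 - \|P_{W_k}(u_{k-1})\|^2)] \le \log \alpha_{\log}$ for each $k$.

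Finally I would take expectations in the logged identity, apply this per-term bound to all $n$ summands, and exponentiate:
\[
\E[\log \|x - x_n\|^2] \le \log \|x - x_0\|^2 + n \log \alpha_{\log},
\]
which is exactly \eqref{log-moment-err-bnd}. The main obstacle I anticipate is making the conditioning rigorous in the presence of $[-\infty, 0]$-valued integrands: one must handle the event where $\|P_{W_k}(u_{k-1})\| = 1$ (equivalently $x_k = x$), on which the summand is $-\infty$ and the claimed bound holds trivially, and confirm that the tower-property manipulations remain valid for such integrands. These points are routine once the independence of $W_k$ from $\mathcal{F}_{k-1}$ is in place. An alternative route combines Theorem \ref{moment-err-thm} with Lemma \ref{log-lim-lemma} applied to $\|x - x_n\|^2$, giving $\exp(\E[\log\|x-x_n\|^2]) = \inf_{s>0}(\E\|x-x_n\|^{2s})^{1/s} \le \|x-x_0\|^2 \inf_{s>0}\alpha_s^n$; this is clean but only yields the stated bound after verifying the sup--inf interchange $\inf_{s>0}\alpha_s = \alpha_{\log}$, so I would prefer the direct conditioning argument.
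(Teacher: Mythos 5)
Your proposal is correct, but your preferred route is genuinely different from the paper's. The paper proves this theorem by reducing it to Theorem \ref{moment-err-thm}: given $\epsilon>0$, it uses Lemma \ref{log-lim-lemma} pointwise to find for each $x\in\mathbb{S}^{d-1}$ an exponent $s_x>0$ with $\bigl(\E(1-\|P_W(x)\|^2)^{s_x}\bigr)^{1/s_x}\le\alpha_{\log}+\epsilon$, upgrades this to an open neighborhood of $x$ by dominated convergence, extracts a finite subcover of $\mathbb{S}^{d-1}$ by compactness, and sets $s^*_\epsilon$ equal to the minimum of the finitely many exponents; Lyapunov's inequality \eqref{lyap-ineq} then gives a uniform Kaczmarz bound $\alpha_{s^*_\epsilon}\le\alpha_{\log}+2\epsilon$, so Theorem \ref{moment-err-thm} together with $\exp(\E\log Y)\le(\E Y^{s})^{1/s}$ yields the estimate with $\alpha_{\log}+2\epsilon$ in place of $\alpha_{\log}$, and letting $\epsilon\downarrow 0$ finishes. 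This is precisely your ``alternative route,'' and the sup--inf interchange you correctly flagged as its missing ingredient (one needs $\inf_{s>0}\alpha_s\le\alpha_{\log}+2\epsilon$, not merely the pointwise statement of Lemma \ref{log-lim-lemma}) is exactly what the paper's covering argument supplies. Your direct argument --- taking logarithms of the product identity, conditioning on $\mathcal{F}_{k-1}$ so that $u_{k-1}$ freezes to a deterministic unit vector, and applying \eqref{log-kacz-def} term by term --- sidesteps that uniformity issue entirely; the extended-real bookkeeping you mention is indeed routine because every summand is nonpositive, so Tonelli and the tower property apply without integrability hypotheses, and the event $x_{k-1}=x$ only makes the left side $-\infty$. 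What the paper's route buys is that it never touches $-\infty$-valued integrands or conditional expectations and recycles the already-proven moment bound; what yours buys is a shorter, self-contained argument that makes transparent why $\alpha_{\log}$ is the right constant and that uses only independence of the $W_k$ (each with the common logarithmic Kaczmarz bound) rather than the full i.i.d.\ hypothesis.
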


\begin{proof}
See Appendix.
\end{proof}

While the error bounds \eqref{moment-err-bnd} and \eqref{log-moment-err-bnd} are natural, it is important to note that they are simply upper bounds and need not be sharp.
The following example shows that in certain cases the error moments can be much smaller than the estimates obtained using Kaczmarz bounds
and thereby illustrates some practical limitations of Theorem \ref{moment-err-thm}.

\begin{example}\label{exa:onb}
Let $\{e_k\}_{k=1}^d$ be an orthonormal basis for $\mathbb{H}^d$ and let $V_k = {\rm span} (e_k)$. Let $W$ be the random 1-dimensional subspace defined by 
\begin{equation*}
\forall 1\leq k\leq d, \ \ \  \Pr[W= V_k]=1/d.
\end{equation*}
Suppose that $\{W_n\}_{n=1}^N$ are i.i.d. versions of $W$.  

To begin, recall that when $0<s<1$, $W$ has the Kaczmarz bound $\alpha_s = (1-1/d)$, see Example \ref{onb-kacz-bnd-example}.  So, by Theorem \ref{moment-err-thm}
the error for subspace action algorithm \eqref{ff-kacz} satisfies
\begin{equation} \label{onb-examp-kacz-errbnd}
\mathbb{E}\|x - x_N\|^{2s} \leq (1 - 1/d)^{sN} \| x - x_0\|^{2s}.
\end{equation}

Next, a more detailed analysis will show that the error bound \eqref{onb-examp-kacz-errbnd} can be significantly improved in this example.
Let the random variable $K_j$ denote the number of $\{W_n\}_{n=1}^N$ which equal $V_j$.
Since the $\{e_n\}_{n=1}^d$ are orthonormal, the projections $P_{W_n^{\perp}}$ commute, namely $P_{W_j^{\perp}} P_{W_k^\perp} = P_{W_k^\perp} 
P_{W_j^\perp}$ for all $j,k$.  This together with \eqref{error-nonorm} gives
$$\|x - x_N\|^2 = \| P_{W_N^{\perp}} \cdots P_{W_1^{\perp}} (x_0 - x)\|^2 = \| P_{V_1^\perp}^{K_1}  P_{V_2^\perp}^{K_2} \cdots  P_{V_d^\perp}^{K_d} (x - x_0)\|^2.$$

Since the $\{e_j\}_{j=1}^d$ are orthonormal it can be verified that if $K_j\neq 0$ holds for all $1\leq j \leq d,$ then $P_{V_1^\perp}^{K_1}  P_{V_2^\perp}^{K_2} \cdots  P_{V_d^\perp}^{K_d}=0.$  
Let $A_j$ denote the event that $K_j=0$, and note that $\Pr[A_1\cap A_2 \cap \cdots \cap A_n] = (1 - n/d)^N$.  
Let $A = \bigcup_{j=1}^d A_j$, and let $\chi_A$ denote the indicator function of the event $A$.
Thus
\begin{equation} \label{err-eq-chiA}
\mathbb{E}\|x- x_N\|^{2s} = \mathbb{E} \left( \| P_{V_1^\perp}^{K_1}  P_{V_2^\perp}^{K_2} \cdots  P_{V_d^\perp}^{K_d} (x - x_0)\|^{2s} \chi_A \right).
\end{equation}

To obtain an upper bound on $\mathbb{E}\|x - x_N\|^{2s}$, note that the projections $P_{V_j^{\perp}}$ have norm one, and apply the inclusion-exclusion principle
to \eqref{err-eq-chiA} as follows
\begin{align}
\mathbb{E}\|x - x_N\|^{2s}  & \leq \|x-x_0\|^{2s} \mathbb{E} [ \chi_A]=\|x - x_0\|^{2s} {\Pr} \left( \bigcup_{j=1}^d A_j \right) \notag \\
& = \|x - x_0\|^{2s} \sum_{k=1}^d (-1)^{k+1}  \binom{d}{k} {\Pr} \left( \bigcap_{j=1}^k A_j \right)\notag \\
& = \|x - x_0\|^{2s} \sum_{k=1}^d (-1)^{k+1}  \binom{d}{k}  \left( 1 - \frac{k}{d}\right)^N. \label{gen-upperbnd-onb-examp}
\end{align}
Keeping only the $k=1$ term in the sum \eqref{gen-upperbnd-onb-examp} gives
\begin{equation} \label{onb-example-better-bnd}
\mathbb{E}\|x_N - x\|^{2s}  \leq d(1 - 1/d)^N \|x - x_0\|^{2s}.
\end{equation}
When $0<s<1$ (and especially when $s$ is near 0), the error bound \eqref{onb-example-better-bnd} is smaller than the error bound \eqref{onb-examp-kacz-errbnd} 
for adequately large $N$.

Finally, it is worth noting that the upper bound \eqref{gen-upperbnd-onb-examp} cannot be significantly improved.
To see this, consider the case when $x-x_0$ satisfies $| \langle (x-x_0), e_n \rangle| \geq C>0$ for all $1 \leq n \leq d$.  
Since the $\{ e_n\}_{n=1}^d$ are orthonormal, it can be shown that if $K_j=0$ for some $1\leq j \leq n$ then 
$$\| P_{V_1^\perp}^{K_1}  P_{V_2^\perp}^{K_2} \cdots  P_{V_d^\perp}^{K_d} (x - x_0)\| \geq C.$$
Thus \eqref{err-eq-chiA} and similar steps as for \eqref{gen-upperbnd-onb-examp} give
\begin{equation*} \label{onb-examp-lowerbnd}
\mathbb{E}\|x_n - x\|^{2s}  
\geq C^{2s} \mathbb{E} (\chi_A)= C^{2s} \ {\Pr} \left( \bigcup_{j=1}^d A_j \right) =  C^{2s} \sum_{k=1}^d (-1)^{k+1}  \binom{d}{k}  \left( 1 - \frac{k}{d}\right)^N.
\end{equation*}
\end{example}

Theorems \ref{moment-err-thm} and \ref{log-moment-err-thm} give upper bounds 
on error moments for the algorithm \eqref{ff-kacz} in terms of the Kaczmarz bounds $\alpha_s$ and $\alpha_{\log}$.
Kaczmarz bounds can similarly be used to control rates of almost sure convergence.
The next theorem can be proven in the same manner as either one of the two different proofs of Theorem 6.2 in \cite{CP}.
\begin{theorem}\label{pro_gen}
Let $\{W_k\}_{k=1}^{\infty}$ be independent random subspaces of $\mathbb{H}^d$.  Let $s>0$ be fixed and suppose that each $W_k$ has the common Kaczmarz bound $0<\alpha_s<1$ of order $s$. The error in the subspace action algorithm \eqref{ff-kacz}  satisfies
$$\forall\ 0<r< 1/\alpha_s, \quad \lim_{n\rightarrow\infty} r^n \|x-x_n\|^2=0, \ \ \hbox{ almost surely.}$$ 
\end{theorem}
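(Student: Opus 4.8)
The plan is to convert the moment bound of Theorem \ref{moment-err-thm} into an almost sure statement, mirroring one of the two proofs of Theorem 6.2 in \cite{CP}. The only input I need is the consequence of Theorem \ref{moment-err-thm} that, since each $W_n$ has the common Kaczmarz bound $\alpha_s$ of order $s$,
\[
\E\|x-x_n\|^{2s} \leq \alpha_s^{ns}\|x-x_0\|^{2s}
\]
for every $n\geq 1$. I then fix $r$ with $0<r<1/\alpha_s$ and record the crucial gain that $r\alpha_s<1$, hence $(r\alpha_s)^s<1$; this strict inequality is what ultimately makes a geometric series summable and is the real engine of the argument.

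The cleanest route is to estimate the entire random series in one step via Tonelli's theorem. Since every summand is nonnegative,
\[
\E\left[\sum_{n=1}^\infty \bigl(r^n\|x-x_n\|^2\bigr)^s\right] = \sum_{n=1}^\infty r^{ns}\,\E\|x-x_n\|^{2s} \leq \|x-x_0\|^{2s}\sum_{n=1}^\infty (r\alpha_s)^{ns},
\]
and the right-hand side converges because $(r\alpha_s)^s<1$. Thus the nonnegative random series $\sum_n (r^n\|x-x_n\|^2)^s$ has finite expectation, so it is finite almost surely; its general term must therefore tend to $0$ almost surely, and since $s>0$ this yields exactly the claim $r^n\|x-x_n\|^2\to 0$ almost surely.

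Alternatively I would run the Borel--Cantelli route. For a fixed $\varepsilon>0$, Markov's inequality applied to the nonnegative variable $\|x-x_n\|^{2s}$ gives
\[
\Pr\bigl[r^n\|x-x_n\|^2 \geq \varepsilon\bigr] = \Pr\bigl[\|x-x_n\|^{2s}\geq \varepsilon^s r^{-ns}\bigr] \leq \varepsilon^{-s}\|x-x_0\|^{2s}(r\alpha_s)^{ns}.
\]
These probabilities are summable in $n$ because $(r\alpha_s)^s<1$, so the Borel--Cantelli lemma shows that almost surely $r^n\|x-x_n\|^2<\varepsilon$ for all large $n$. Applying this with $\varepsilon=1/m$ for each $m\in\N$ and discarding the resulting countable union of null sets gives $\limsup_n r^n\|x-x_n\|^2=0$ almost surely.

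I do not expect a genuine obstacle here; the content is essentially bookkeeping once the moment bound is in hand. The one point needing a moment of care is the passage from the ``fixed $\varepsilon$'' conclusion to the full limit in the Borel--Cantelli route, which is dispatched by the countable intersection over $\varepsilon=1/m$ (the Tonelli route bypasses even this). The essential ingredient, and the place where the hypothesis is used, is the strict bound $r<1/\alpha_s$, which is precisely what forces $r\alpha_s<1$ and renders the relevant geometric series convergent.
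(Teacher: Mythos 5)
Your proof is correct and takes essentially the approach the paper intends: the paper omits its own argument and points to Theorem 6.2 of \cite{CP}, whose proofs are exactly the kind of Markov/Borel--Cantelli and summable-series arguments you give, both driven by the moment bound of Theorem \ref{moment-err-thm} and the strict inequality $r\alpha_s<1$. Nothing is missing.
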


The next theorem is proven in the same manner as Theorem 6.3 in \cite{CP}.
\begin{theorem}\label{thm_gen}
Suppose that the random subspaces $\{ W_n\}_{n=1}^{\infty}$ of $\mathbb{H}^d$ are independent and identically distributed versions of a random subspace $W$ that has the logarithmic Kaczmarz bound 
$0<\alpha_{\log}<1$.
The error in the subspace action algorithm \eqref{ff-kacz}  satisfies
$$\forall \ 0 < r < 1/\alpha_{\log}, \ \ \ \lim_{n\to \infty} r^n\|x-x_n\|^2 =0, \ \ \hbox{ almost surely.}$$
\end{theorem}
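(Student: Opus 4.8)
The plan is to mirror the proof of Theorem~6.3 in \cite{CP} by passing from the multiplicative error formula to its logarithm and applying a strong law of large numbers. First I would invoke the product formula from the first (unlabeled) lemma of Section~\ref{bounds-sec}, which gives $\|x-x_n\|^2 = \|x-x_0\|^2 \prod_{k=1}^n Z_k$ where $Z_k = 1 - \|P_{W_k}(u_{k-1})\|^2$ and $u_{k-1} = (x-x_{k-1})/\|x-x_{k-1}\|$ is the normalized error direction. Taking logarithms converts the product into a sum, so that
\[
\frac{1}{n}\log\bigl(r^n\|x-x_n\|^2\bigr) = \log r + \frac{1}{n}\log\|x-x_0\|^2 + \frac{1}{n}\sum_{k=1}^n \log Z_k .
\]
Since $r<1/\alpha_{\log}$ gives $\log(r\alpha_{\log})<0$, it suffices to show $\limsup_n \frac1n\sum_{k=1}^n \log Z_k \le \log\alpha_{\log}$ almost surely; this forces the left-hand side to have a strictly negative $\limsup$, hence $\log(r^n\|x-x_n\|^2)\to-\infty$ and $r^n\|x-x_n\|^2\to 0$ almost surely.

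The key probabilistic observation is the conditional independence built into the iteration. Letting $\mathcal{F}_{k-1} = \sigma(W_1,\dots,W_{k-1})$, the direction $u_{k-1}$ is $\mathcal{F}_{k-1}$-measurable while $W_k$ is independent of $\mathcal{F}_{k-1}$ and distributed as $W$. Hence $\E[\log Z_k \mid \mathcal{F}_{k-1}] = g(u_{k-1})$, where $g(u) = \E[\log(1-\|P_W(u)\|^2)]$, and the logarithmic Kaczmarz bound \eqref{log-kacz-def} gives $g(u)\le\log\alpha_{\log}$ for every $u\in\Sbd$. Writing $\log Z_k = D_k + g(u_{k-1})$ with $D_k = \log Z_k - \E[\log Z_k\mid\mathcal{F}_{k-1}]$ produces a martingale difference sequence and yields $\frac1n\sum_{k=1}^n\log Z_k \le \frac1n\sum_{k=1}^n D_k + \log\alpha_{\log}$. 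The desired bound then reduces to $\frac1n\sum_{k=1}^n D_k\to 0$ almost surely, i.e.\ a strong law of large numbers for $\{D_k\}$.

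I expect the strong law step to be the main obstacle. The terms are neither independent nor identically distributed, since $u_{k-1}$ is itself random and adapted, and $\log Z_k$ has a heavy left tail: it equals $-\infty$ exactly when $P_{W_k}(u_{k-1})$ is a unit vector. The cleanest route is Chow's martingale strong law, which gives $\frac1n\sum D_k\to 0$ once $\sum_k k^{-2}\,\E[D_k^2\mid\mathcal{F}_{k-1}]<\infty$; this holds provided the conditional second moments $\E[(\log(1-\|P_W(u)\|^2))^2]$ are bounded uniformly over $u\in\Sbd$, which is where the integrability afforded by $\alpha_{\log}>0$, together with compactness of the sphere and continuity of $u\mapsto\|P_W(u)\|$, must be used. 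A truncation of $\log Z_k$ at a large level, followed by a Dini-type uniformization in $u$ and a monotone passage to the limit, handles the possibility that $g(u)=-\infty$ for some directions.

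Finally, as an alternative to the direct martingale argument, one can attempt to deduce the result from the order-$s$ statement in Theorem~\ref{pro_gen}: since $\alpha_s\ge\alpha_{\log}$ for every $s>0$ by Lemma~\ref{log-lim-lemma}, if one verifies $\lim_{s\to0^+}\alpha_s=\alpha_{\log}$ (again a Dini-type uniformity on the compact sphere), then any $r<1/\alpha_{\log}$ satisfies $r<1/\alpha_s$ for all sufficiently small $s$, and the order-$s$ almost sure convergence applies verbatim. I would keep the martingale proof as the primary argument, since it parallels \cite{CP} most directly and avoids having to interchange the supremum over directions with the limit $s\to0$.
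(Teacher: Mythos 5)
Your proposal is correct, but note that it is your \emph{fallback}, not your primary route, that coincides with the paper's own proof. The paper proves Theorem \ref{thm_gen} exactly as in your last paragraph: the compactness argument written out in the Appendix for Theorem \ref{log-moment-err-thm} produces, for every $\epsilon>0$, an exponent $s^*_\epsilon>0$ with $\alpha_{s^*_\epsilon}\leq \alpha_{\log}+2\epsilon$ (Lemma \ref{log-lim-lemma} pointwise in $x$, dominated convergence to pass to a neighborhood, a finite subcover of $\mathbb{S}^{d-1}$, and Lyapunov's inequality to take the minimum of the finitely many exponents); choosing $\epsilon$ so that $\alpha_{\log}+2\epsilon<\min(1,1/r)$ and invoking Theorem \ref{pro_gen} with $s=s^*_\epsilon$ finishes the proof. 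The interchange of the supremum over directions with the limit $s\to0$ that you wanted to avoid is precisely the content of that appendix argument, and since it is already needed for Theorem \ref{log-moment-err-thm}, nothing extra is paid for it here.

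Your preferred martingale route is a genuinely different and workable argument, but as written it has one soft spot: the claim that $\alpha_{\log}>0$ yields a uniform bound on $\E\bigl[(\log(1-\|P_W(u)\|^2))^2\bigr]$ over $u\in\mathbb{S}^{d-1}$ is false. The hypothesis only controls $\sup_u\E[\log(1-\|P_W(u)\|^2)]$; for individual directions even the first moment can be $-\infty$. For example, in $\R^2$ with $W$ uniform on two orthogonal lines one has $\alpha_{\log}=1/2$, yet $g(e_1)=-\infty$ and the conditional variances are unbounded near $e_1$, so Chow's strong law cannot be applied directly to $D_k$. Your truncation is the correct repair: replacing $\log Z_k$ by $\max(\log Z_k,-M)$ only increases the averages, the bounded-increment martingale strong law then applies, and one needs $M$ large enough that $\sup_u\E[\max(\log(1-\|P_W(u)\|^2),-M)]\leq\log\alpha_{\log}+\epsilon$ --- which is again the same compactness-plus-dominated-convergence uniformization, now in the truncation level $M$ rather than the exponent $s$. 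In short, both routes work and both hinge on the same uniformization step; the reduction to Theorem \ref{pro_gen} is shorter given what the paper has already established, while the martingale route buys the slightly stronger conclusion $\limsup_{n\to\infty}\frac{1}{n}\log\|x-x_n\|^2\leq\log\alpha_{\log}$ almost surely.
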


\begin{remark}[Randomized subspace actions for fusion frames] \label{remark-rand-alg-specialized}
The error bounds of this section apply to deterministic fusion frames through the randomized approach in Example \ref{example-randomized-ff}.
Let $\{(U_n, v_n)\}_{n=1}^N$ be a fusion frame for $\mathbb{H}^d$ with frame bounds $0<A\leq B$.
Let $W$ be the random subspace defined by $\Pr(W=U_n)=\frac{v_n^2}{\sum_{k=1}^N v_k^2}$
and let $\{W_n\}_{n=1}^{\infty}$ be i.i.d. versions of the random subspace $W$.
To recover $x\in \mathbb{H}^d$ from the projections $y_n = P_{U_n}(x), 1 \leq n \leq N$, proceed as in Example \ref{example-randomized-ff}
by iterating \eqref{ff-kacz} with the random subspaces $\{W_n\}_{n=1}^{\infty}$.
Theorems \ref{moment-err-thm} and \ref{pro_gen}, together with Example \ref{rand-det-exam} for $\alpha_1$, show that the iterates $x_n$ in the subspace action method \eqref{ff-kacz} satisfy
$$\E\|x-x_n\|^2 \leq \left(1 - \frac{A}{\sum_{n=1}^N v_n^2}\right)^{n}\|x-x_0\|^{2}$$
and
$$\forall\ 0<r< \left(1 - \frac{A}{\sum_{n=1}^N v_n^2}\right)^{-1}, \quad \lim_{n\rightarrow\infty} r^n \|x-x_n\|^2=0, \ \ \hbox{ almost surely.}$$
\end{remark}

\section{Minimal Kaczmarz bounds and optimal distributions} \label{opt-dist-sec}

Theorems \ref{moment-err-thm} and \ref{log-moment-err-thm} show that smaller Kaczmarz bounds $\alpha_s$ or $\alpha_{\log}$
yield smaller upper bounds on various error moments in \eqref{moment-err-bnd}
and \eqref{log-moment-err-bnd}.
In view of this, it is natural to ask which distributions on the probabilistic fusion frame  $W$ give the smallest Kaczmarz bound $\alpha_s$ or smallest logarithmic Kaczmarz bound $\alpha_{\log}$. 
For this question to be nontrivial, we shall restrict our attention to random subspaces $W\in G(k,d)$
with fixed dimension $1\leq k<d$.  Indeed, without fixing the dimension $k$, the trivial case 
$W = \mathbb{H}^{d}$ would ensure that $\alpha_s = \alpha_{\log} =0$ and that the algorithm \eqref{ff-kacz} converges exactly
after one step.

So, in this section we shall assume that $W\in G(k,d)$ is a random subspace, and we 
consider the question of which distributions on $W$ 
minimize
$$\sup_{x\in\mathbb{S}^{d-1}}  \left(  \mathbb{E} (1-\|P_{W}(x)\|^2)^s \right)^{1/s}
\ \ \ \hbox{ or } \ \ \ 
\sup_{x\in\mathbb{S}^{d-1}} \exp \left(  \mathbb{E} \left[\log(1-\|P_{W}(x)\|^2) \right] \right).$$
Equivalently, we seek to determine which Borel probability measures $\mu$ on $G(k,d)$
minimize each of the quantities
\begin{align} \label{kacz-s-prob}
F_{\alpha_s}(\mu) = \sup_{x\in\mathbb{S}^{d-1}}  \int_{G(k,d)} (1-\|P_{W}(x)\|^2)^s d \mu (W),
\end{align}
\begin{equation} \label{kacz-log-prob}
F_{\alpha_{\log}}(\mu) = \sup_{x\in\mathbb{S}^{d-1}}  \int_{G(k,d)} \log(1-\|P_{W}(x)\|^2)  d \mu(W).
\end{equation}
For related potential theoretic problems involving frames and fusion frames, see \cite{CF09, EO12, MRS}.
For related work on optimal randomizations in the Kaczmarz algorithm, see \cite{DSP14}.

It will be convenient to address \eqref{kacz-s-prob} and \eqref{kacz-log-prob} as special cases of the
more  general problem of finding 
which Borel probability measures $\mu$ on $X$ minimize
\begin{equation} \label{minprob}
\sup_{x\in X} \int_{Y} K(x,y) d\mu(y),
\end{equation}
where $X,Y$ are suitable homogeneous spaces and $K:X \times Y \to \R \cup \{- \infty\}$
is a suitable Borel measurable kernel.

We will start with a general analysis of the minimizer of \eqref{minprob}, which is of interest in its own right, and then 
specialize it to minimize \eqref{kacz-s-prob} and \eqref{kacz-log-prob} in Section \ref{inv-meas-min}.

\subsection{Kernels and potentials on homogeneous spaces} \label{homog-space-sec}

We shall assume throughout this section that $X$ and $Y$ are homogeneous spaces for a group $G$.
Specifically, we assume that $(X, d_X)$ and $(Y,d_Y)$ are compact metric spaces and that
$G$ is a compact topological group that acts isometrically and transitively on $X$ and $Y$. 
Recall that $G$ acts isometrically on $X$ if
$$\forall g \in G, \forall x_1,x_2 \in X,  \ \ \ d_X(g x_1, g x_2) = d_X(x_1,x_2),$$
and $G$ acts transitively on $X$ if
$$\forall x_1, x_2 \in X,  \exists g \in G, \ \ \hbox{ such that} \ \   x_1 =  g x_2.$$
In a given compact topological space $X$, we let $\mathcal{B}(X)$ denote the Borel $\sigma$-algebra and let $\mathcal{M}(X)$ denote the set of all
Borel probability measures on $(X,\mathcal{B}(X))$.

By the construction of Haar measure, see Part I, Section 1 in \cite{MS}, there exist unique Radon probability measures
$m_X \in \mathcal{M}(X)$ and $m_Y\in \mathcal{M}(Y)$ with the $G$-invariance property that for all 
$B_1 \in \mathcal{B}(X)$ and $B_2 \in \mathcal{B}(Y)$
\begin{equation} \label{inv-haar-meas}
\forall g \in G, 
\ \ \ m_X(g(B_1)) = m_X(B_1) \  \hbox{ and } \ m_Y(g(B_2)) = m_X(B_2),
\end{equation}
where $g(B_i) = \{ g(b) :  b \in B_i\}$.
The invariant measures $m_X$ and $m_Y$ are {\em strictly positive}, i.e., $m_X(\mathcal{O}_1)>0$ and $m_Y(\mathcal{O}_2)>0$ holds for all nonempty open sets $\mathcal{O}_1 \subset X$ and $\mathcal{O}_2 \subset Y$.  For compact spaces, this is a consequence of the transitivity of the group action.

In metric spaces, recall that a function $f:X \to \R\cup \{\pm \infty\}$ is {\em upper semi-continuous} if 
$\limsup_{n \to \infty} f(x_n) \leq f(x)$ whenever $\lim_{n \to \infty} x_n = x$ and $x \in X$, $\{x_n\}_{n=1}^{\infty} \subset X$.
In particular, if $f(x_0) < c$ for some $x_0 \in X, c \in \R$ then for every $\epsilon>0$ there exists an open neighborhood $\mathcal{O} \subset X$ containing $x_0$ such that
if $x\in \mathcal{O}$ then $f(x) < c + \epsilon$.

\begin{definition} \label{admiss-ker-def}
A Borel measurable function $K:X \times Y \to \R \cup \{- \infty\}$ will 
be said to be an {\em admissible kernel} if the following four conditions hold
\begin{equation}  \label{K-admiss-eq1}
\exists B_K \in [0,\infty), \forall x \in X, \forall y \in Y, \ \ \ 
-\infty \leq K(x,y) \leq B_K,
\end{equation}
and
\begin{equation}  \label{K-admiss-eq2}
\forall y \in Y, \ \ \  \int_{X} | K(x,y) | \thinspace dm_X(x) < \infty,
\end{equation}
and
\begin{equation}  \label{K-admiss-eq3}
\forall y \in Y, \hbox{ the function }  K(\thinspace \cdot \thinspace, y) \hbox{  is upper semi-continuous,}
\end{equation}
and
\begin{equation} \label{K-sym-O}
\forall g \in G, \forall x \in X,
\forall y \in Y, \ \ \ \ \ K(g (x), y) = K(x, g^{-1}(y)).
\end{equation}
\end{definition}
\vspace{.1in}

\begin{lemma}\label{lm:ck}
If $K$ is an admissible kernel then there exists a constant $C_K\in \R$ such that
\begin{equation} \label{int-const}
\forall \thinspace y \in Y, \ \ \ \int_{X} K(x,y)dm_X(x) = C_K.
\end{equation}
In view of \eqref{int-const}, we shall say that an admissible kernel $K$ has constant $C_K$.
\end{lemma}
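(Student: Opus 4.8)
The plan is to show that the integral $\int_X K(x,y)\,dm_X(x)$ is independent of $y$, using the transitivity of the $G$-action on $Y$ together with the invariance of $m_X$ and the symmetry relation \eqref{K-sym-O}. First I would fix two arbitrary points $y_1, y_2 \in Y$ and invoke transitivity to produce $g \in G$ with $y_1 = g(y_2)$. The goal is then to show $\int_X K(x,y_1)\,dm_X(x) = \int_X K(x,y_2)\,dm_X(x)$. Before any manipulation, I would note that \eqref{K-admiss-eq2} guarantees $K(\,\cdot\,,y)$ is $m_X$-integrable for each fixed $y$, so all the integrals under discussion are finite and the computations are legitimate (no $-\infty$ pathologies survive integration against $m_X$).

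The core computation proceeds by a change of variables. Starting from $\int_X K(x,y_1)\,dm_X(x) = \int_X K(x, g(y_2))\,dm_X(x)$, I would apply the symmetry \eqref{K-sym-O} in the form $K(x, g(y_2)) = K(g^{-1}(x), y_2)$; note this is exactly \eqref{K-sym-O} read with $g$ replaced by $g^{-1}$ and $x$ replaced by $g^{-1}(x)$, since $K(g^{-1}(x),y_2) = K(x, g(y_2))$. Thus
\begin{equation*}
\int_X K(x,y_1)\,dm_X(x) = \int_X K(g^{-1}(x), y_2)\,dm_X(x).
\end{equation*}
The invariance of $m_X$ under $G$, stated in \eqref{inv-haar-meas}, means precisely that pushing forward $m_X$ by any $g \in G$ returns $m_X$; hence the change of variables $u = g^{-1}(x)$ leaves the measure unchanged, giving
\begin{equation*}
\int_X K(g^{-1}(x), y_2)\,dm_X(x) = \int_X K(u, y_2)\,dm_X(u).
\end{equation*}
Chaining these identities yields $\int_X K(x,y_1)\,dm_X(x) = \int_X K(x,y_2)\,dm_X(x)$, and since $y_1, y_2$ were arbitrary, the integral is a constant $C_K$ independent of $y$. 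That $C_K \in \R$ (rather than $-\infty$) follows from the integrability bound \eqref{K-admiss-eq2}.

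The only point requiring genuine care—and the step I would expect to be the main obstacle—is justifying the change of variables rigorously. The substitution $u = g^{-1}(x)$ rests on the statement that $m_X$ is $g$-invariant, which \eqref{inv-haar-meas} gives at the level of Borel sets; to apply it to the integral of a general Borel measurable function one needs the standard measure-theoretic fact that invariance of a measure on all Borel sets implies $\int f \, dm_X = \int f \circ g \, dm_X$ for every integrable $f$ (proved by the usual passage from indicator functions to simple functions to nonnegative functions via monotone convergence, then to integrable functions via the positive/negative decomposition). I would state this as the measure-theoretic transformation rule for invariant measures and apply it to $f = K(\,\cdot\,, y_2)$, which is integrable by \eqref{K-admiss-eq2}. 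Everything else is a routine assembly of the three ingredients—transitivity, the symmetry \eqref{K-sym-O}, and invariance of $m_X$—and I do not anticipate further difficulty.
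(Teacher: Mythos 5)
Your proposal is correct and follows essentially the same route as the paper: apply the symmetry \eqref{K-sym-O} to rewrite $K(x,g(y))$ as $K(g^{-1}(x),y)$, use the $G$-invariance of $m_X$ to absorb the change of variables, and conclude by transitivity of the action on $Y$ together with the integrability condition \eqref{K-admiss-eq2}. The only difference is that you spell out the standard passage from set-level invariance to the transformation rule for integrals, which the paper leaves implicit.
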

\begin{proof}
The $G$-invariance of $m_X$ and \eqref{K-sym-O} imply that for any $g\in G$
and any $y \in Y$
\begin{align*}
\int_{X} K(x,g(y))dm_X(x)
=\int_{X} K(g^{-1}(x),y)dm_X(x)
=\int_{X} K(x,y)dm_X(x).
\end{align*}
Since the action of $g$ on $Y$ is transitive and by \eqref{K-admiss-eq2}
 this completes the proof.\\
\end{proof}

\begin{definition}
If $K$ is an admissible kernel and $\mu \in \mathcal{M}(Y)$ then the associated
{\em potential function} $U_K^{\mu}: X \to \R \cup \{\pm \infty\}$ is defined by
$$\forall x \in X, \ \ \ U^{\mu}_K(x) = \int_{Y} K(x,y) d\mu(y).$$
\end{definition}

\begin{lemma}\label{lem_const}
Let $K$ be an admissible kernel with constant $C_K$.
Given $\mu \in \mathcal{M}(Y)$ and $c\in \R$, {suppose that $U_K^{\mu}(x) =c$ 
for all $x\in X$.  Then $c=C_K$.}
\end{lemma}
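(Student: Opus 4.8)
The plan is to integrate the hypothesis $U_K^{\mu}(x) = c$ over $X$ against the invariant measure $m_X$, and then interchange the order of integration using Fubini--Tonelli to pull out the constant $C_K$ guaranteed by Lemma \ref{lm:ck}. This is the natural strategy because the conclusion identifies a constant potential value with the kernel constant $C_K$, and $C_K$ was defined precisely as the value of $\int_X K(x,y)\, dm_X(x)$, which is independent of $y$.

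Concretely, I would start from the assumption that $U_K^{\mu}(x) = c$ for every $x \in X$, so that integrating against $m_X$ (a probability measure, hence $m_X(X)=1$) gives
\begin{equation*}
c = \int_X c \, dm_X(x) = \int_X U_K^{\mu}(x)\, dm_X(x) = \int_X \left( \int_Y K(x,y)\, d\mu(y) \right) dm_X(x).
\end{equation*}
Next I would swap the order of integration to obtain
\begin{equation*}
c = \int_Y \left( \int_X K(x,y)\, dm_X(x) \right) d\mu(y) = \int_Y C_K \, d\mu(y) = C_K,
\end{equation*}
where the inner integral equals $C_K$ for every $y$ by Lemma \ref{lm:ck}, and the final equality uses that $\mu$ is a probability measure on $Y$. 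This chain of equalities immediately yields $c = C_K$.

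The only genuine obstacle is justifying the interchange of integrals, since the admissible kernel $K$ is permitted to take the value $-\infty$ and is only bounded above by $B_K$ (see \eqref{K-admiss-eq1}), so it need not be integrable as a signed function without care. I would handle this by writing $K = K_+ - K_-$ with $K_+ = \max(K,0)$ bounded (hence $K_+ \in [0,B_K]$ and integrable on the product space $X \times Y$ against $m_X \times \mu$) and $K_- = \max(-K,0) \geq 0$; applying Tonelli's theorem to each of the two nonnegative parts separately legitimizes the exchange. The integrability condition \eqref{K-admiss-eq2}, which asserts $\int_X |K(x,y)|\, dm_X(x) < \infty$ for each $y$, combined with $C_K \in \R$ being finite, ensures the $K_-$ part does not produce an indeterminate $\infty - \infty$; since $\int_X K(x,y)\, dm_X(x) = C_K$ is finite for every $y$ and $\mu$ is finite, the iterated integral of $K_-$ is finite and the splitting recombines cleanly. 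I would remark that this also tacitly requires $c$ itself to be finite, which is part of the hypothesis ($c \in \R$), so the left-hand side is unambiguous.
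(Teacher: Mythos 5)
Your proof is correct and is essentially the same as the paper's: both integrate the constant potential against $m_X$, interchange the order of integration via Fubini--Tonelli, and invoke Lemma \ref{lm:ck} to identify the result with $C_K$ (the paper runs the chain of equalities in the opposite direction, starting from $C_K$ and ending at $U_K^{\mu}(x_0)$, but this is the same argument). Your explicit splitting $K = K_+ - K_-$ to justify the interchange is a slightly more careful write-up of the step the paper handles by simply citing the upper bound \eqref{K-admiss-eq1}.
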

\begin{proof}
Integrating both sides of \eqref{int-const} with respect to $d \mu$ and using \eqref{K-admiss-eq1} to apply the Fubini-Tonelli theorem gives that for any $x_0 \in \mathbb{S}^{d-1}$
\begin{align*}
C_K&=\int_{Y} \int_{X} K(x,y)dm_X(x)d\mu(y)
=\int_{X} \int _{Y} K(x,y)d\mu(y)dm_X(x)\\
&=\int_{X}  U^{\mu}_K(x)dm_X(x)
=\int_{X}  U^{\mu}_K(x_0)dm_X(x)
=U^{\mu}_K(x_0).
\end{align*}
\end{proof}

\begin{lemma}\label{rem:ck}
If $K$ is an admissible kernel with constant $C_K$, then $\forall x \in X, U_K^{m_Y}(x) =C_K.$
\end{lemma}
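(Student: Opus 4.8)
The plan is to show that the invariant measure $m_Y$ on $Y$ produces a potential function $U_K^{m_Y}$ that is constant on $X$, with the constant necessarily equal to $C_K$ by Lemma \ref{lem_const}. So the core task is to verify that $U_K^{m_Y}(x)$ does not depend on $x$.

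First I would fix two points $x_1, x_2 \in X$ and use the transitivity of the $G$-action on $X$ (guaranteed since $X$ is a homogeneous space for $G$) to select $g \in G$ with $x_1 = g(x_2)$. Then I would write out
\begin{equation*}
U_K^{m_Y}(x_1) = \int_Y K(x_1, y)\, dm_Y(y) = \int_Y K(g(x_2), y)\, dm_Y(y),
\end{equation*}
and apply the symmetry property \eqref{K-sym-O} of the admissible kernel to replace $K(g(x_2), y)$ by $K(x_2, g^{-1}(y))$. This is exactly the same maneuver used in the proof of Lemma \ref{lm:ck}, but now the change of variables happens on the $Y$-side against the invariant measure $m_Y$ rather than on the $X$-side against $m_X$.

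Next I would perform the substitution $y' = g^{-1}(y)$ in the integral. The key tool is the $G$-invariance of $m_Y$ as recorded in \eqref{inv-haar-meas}: pushing $m_Y$ forward under the map $y \mapsto g^{-1}(y)$ leaves $m_Y$ unchanged, so
\begin{equation*}
\int_Y K(x_2, g^{-1}(y))\, dm_Y(y) = \int_Y K(x_2, y')\, dm_Y(y') = U_K^{m_Y}(x_2).
\end{equation*}
Since $x_1, x_2$ were arbitrary, this shows $U_K^{m_Y}$ is constant on $X$. The integrability condition \eqref{K-admiss-eq2}, combined with the fact that $m_Y$ is a probability measure and the upper bound $K \leq B_K$ from \eqref{K-admiss-eq1}, ensures the integrals defining the potential are well-defined in $\R \cup \{-\infty\}$; in fact one should note $U_K^{m_Y}(x)$ is finite by the same Fubini-type argument. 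Once constancy is established, Lemma \ref{lem_const} immediately identifies the constant value as $C_K$, completing the proof.

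The only subtle point, and the step I would be most careful about, is justifying the change of variables on $Y$ for a kernel that may take the value $-\infty$: I need to confirm that the invariance of $m_Y$ applies at the level of integrating the possibly extended-real-valued function $K(x_2, \cdot)$. This is handled by \eqref{K-admiss-eq2}, which gives absolute integrability of $K(x, \cdot)$ against the invariant measure for each fixed $x$ (here using that $m_Y$ and $m_X$ play symmetric roles as invariant measures), so the measure-preserving transformation $y \mapsto g^{-1}(y)$ genuinely preserves the integral rather than merely the measure of sets. Everything else is a direct transcription of the argument already carried out in Lemma \ref{lm:ck}.
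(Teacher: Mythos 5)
Your proof is correct and follows essentially the same route as the paper: the paper's proof of Lemma \ref{rem:ck} likewise establishes constancy of $U_K^{m_Y}$ by the change-of-variables argument from Lemma \ref{lm:ck} (now using the symmetry \eqref{K-sym-O} together with the $G$-invariance of $m_Y$ and transitivity on $X$) and then invokes Lemma \ref{lem_const} to identify the constant as $C_K$. Your filled-in details, including the care about the extended-real-valued kernel, are consistent with what the paper leaves implicit.
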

\begin{proof}
Computing similarly as in the proof of Lemma \ref{lm:ck} shows that the function $U_K^{m_Y}(x)=\int_{Y} K(x,y) dm_Y(y)$ is constant, and so by Lemma \ref{lem_const}, $U_K^{m_Y}(x)=C_K$ holds for all $x\in X$.
\end{proof}

\begin{lemma}
If $K$ is an admissible kernel and $\mu \in \mathcal{M}(Y)$
then the potential function $U^{\mu}_K$ is upper semi-continuous.
\end{lemma}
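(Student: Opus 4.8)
The plan is to reduce upper semi-continuity to Fatou's lemma by exploiting the uniform upper bound $B_K$ from \eqref{K-admiss-eq1}. Since $X$ is a compact metric space, it suffices to show that whenever $x_n \to x$ in $X$ we have $\limsup_{n\to\infty} U^{\mu}_K(x_n) \leq U^{\mu}_K(x)$. First I would record that, because $K(x,y) \leq B_K$ for all $(x,y)$ and $\mu$ is a probability measure, each potential value $U^{\mu}_K(x) = \int_{Y} K(x,y) d\mu(y)$ is well defined in $[-\infty, B_K]$: the positive part of $K(x, \cdot)$ is bounded by $B_K$ and hence $\mu$-integrable, so there is no ambiguity in the integrals below even when $U^{\mu}_K(x) = -\infty$.

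Fix a sequence $x_n \to x$. The key step is to pass to the nonnegative functions $g_n(y) = B_K - K(x_n, y) \geq 0$, which are Borel measurable in $y$ since $K$ is Borel on $X \times Y$. By the upper semi-continuity of $K(\thinspace \cdot \thinspace, y)$ in \eqref{K-admiss-eq3}, for each fixed $y$ we have $\limsup_n K(x_n, y) \leq K(x, y)$, equivalently $\liminf_n g_n(y) \geq B_K - K(x, y)$. Applying Fatou's lemma to the nonnegative $g_n$ then yields
\[
\int_{Y} \bigl( B_K - K(x, y) \bigr) d\mu(y) \leq \int_{Y} \liminf_n g_n(y) \thinspace d\mu(y) \leq \liminf_n \int_{Y} g_n(y) \thinspace d\mu(y).
\]

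Finally I would unwind both sides using $\int_{Y} B_K \thinspace d\mu = B_K$. The left-hand side equals $B_K - U^{\mu}_K(x)$, while the right-hand side equals $B_K - \limsup_n U^{\mu}_K(x_n)$, since $\liminf_n \int_{Y} g_n \thinspace d\mu = B_K - \limsup_n U^{\mu}_K(x_n)$. Cancelling $B_K$ and rearranging gives $\limsup_n U^{\mu}_K(x_n) \leq U^{\mu}_K(x)$, the desired upper semi-continuity. The only point requiring care — and the main, rather mild, obstacle — is keeping the integrals meaningful in the case $U^{\mu}_K(x) = -\infty$; this is handled precisely by the upper bound $B_K$, which both makes the positive part $\mu$-integrable and renders the $g_n$ genuinely nonnegative so that Fatou applies. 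Note that conditions \eqref{K-admiss-eq2} and \eqref{K-sym-O} play no role in this lemma; only the upper bound \eqref{K-admiss-eq1} and the pointwise upper semi-continuity \eqref{K-admiss-eq3} are used.
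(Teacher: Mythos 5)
Your argument is correct and is essentially identical to the paper's own proof: both pass to the nonnegative functions $g_n(y)=B_K-K(x_n,y)$, apply Fatou's lemma, and use the pointwise upper semi-continuity of $K(\thinspace\cdot\thinspace,y)$ from \eqref{K-admiss-eq3} to conclude $\limsup_n U^{\mu}_K(x_n)\leq U^{\mu}_K(x)$. Your added remarks on handling the case $U^{\mu}_K(x)=-\infty$ and on which admissibility conditions are actually used are accurate but do not change the substance of the argument.
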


\begin{proof}
Let $x\in X$ be arbitrary and suppose that $\{x_n\}_{n=1}^{\infty} \subset X$ satisfies $\lim_{n\to \infty} x_n = x$.
Letting $B_K$ be as in \eqref{K-admiss-eq1}, it follows from \eqref{K-admiss-eq3} that
 for all $y \in Y$
 \begin{equation} \label{semicont-lemma-eq1}
B_K-K(x,y) \leq B_K- \limsup_{n\to \infty} K(x_n,y)  = \liminf_{n \to \infty} (B_K-K(x_n,y)).
\end{equation}
Applying Fatou's lemma to the nonnegative functions  $g_n(y)=B_K - K(x_n, y) \geq 0$ 
and using \eqref{semicont-lemma-eq1} gives
\begin{align*}
B_K -  U^{\mu}_K(x)
&= \int_{Y} (B_K - K(x,y)) d\mu(y) \\
&\leq  \int_{Y}  \liminf_{n \to \infty} (B_K -K(x_n,y)) d \mu(y)\\
& = \int_{Y} \liminf_{n \to \infty} g_n(y) d \mu(y) \\
& \leq \liminf_{n \to \infty} \int_{Y}  g_n(y) d \mu(y) \\
&= B_K - \limsup_{n \to \infty} \int_{Y}  K(x_n,y) d \mu(y) \\
&= B_K - \limsup_{n \to \infty}U^{\mu}_K(x_n).
\end{align*}
This shows that $U^{\mu}_K$ is upper semi-continuous.
\end{proof}

The following theorem characterizes minimizers $\mu \in \mathcal{M}(Y)$
of \eqref{minprob} in terms of the potential function $U^{\mu}_K$ being constant.
\begin{theorem}\label{thm_argmin}
Let $K$ be an admissible kernel with constant $C_K$
and let $\mu_0\in \mathcal{M}(Y)$.
The following are equivalent: \vspace{.05in}
\begin{enumerate}
\item[(1)] The  function $U^{\mu_0}_K$ is constant,\vspace{.05in}
\item [(2)] For every $x\in X$, there holds $U^{\mu_0}_K(x)=C_K$, \vspace{.05in}
\item[(3)]  $\mu_0$ minimizes \eqref{minprob}, that is, $\inf_{\mu \in \mathcal{M}(Y)}\sup_{x\in X}U^{\mu}_K(x)=
\sup_{x\in X}U^{\mu_0}_K(x).$
\end{enumerate}
\end{theorem}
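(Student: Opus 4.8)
The plan is to prove the cycle $(1)\Rightarrow(2)\Rightarrow(3)\Rightarrow(1)$, with the invariant measures $m_X$ and $m_Y$ as the main tool. The equivalence of (1) and (2) is immediate: if $U^{\mu_0}_K$ is constant then Lemma \ref{lem_const} identifies the constant value as $C_K$, giving (2), while (2) trivially gives (1).

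The engine behind the remaining implications is a single averaging identity. For an arbitrary $\mu\in\mathcal{M}(Y)$, I would integrate the potential $U^{\mu}_K$ against $m_X$ and interchange the order of integration --- justified by the upper bound \eqref{K-admiss-eq1} exactly as in the proof of Lemma \ref{lem_const} --- and then invoke Lemma \ref{lm:ck} to obtain
\[
\int_X U^{\mu}_K(x)\, dm_X(x) = \int_Y \left( \int_X K(x,y)\, dm_X(x) \right) d\mu(y) = C_K.
\]
Since $m_X$ is a probability measure, the supremum of a function dominates its $m_X$-average, so $\sup_{x\in X} U^{\mu}_K(x) \geq C_K$ for every $\mu\in\mathcal{M}(Y)$. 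On the other hand, Lemma \ref{rem:ck} gives $\sup_{x\in X} U^{m_Y}_K(x) = C_K$, so the infimum in \eqref{minprob} equals $C_K$ and is attained at $m_Y$. This yields $(2)\Rightarrow(3)$ at once: if $U^{\mu_0}_K\equiv C_K$ then $\sup_{x} U^{\mu_0}_K(x) = C_K = \inf_{\mu}\sup_{x} U^{\mu}_K(x)$, so $\mu_0$ is a minimizer.

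For $(3)\Rightarrow(1)$, which I expect to be the main obstacle, suppose $\mu_0$ minimizes \eqref{minprob}. By the previous paragraph $\sup_{x} U^{\mu_0}_K(x) = C_K$, so $U^{\mu_0}_K(x)\leq C_K$ for every $x$, while the averaging identity forces $\int_X \bigl(C_K - U^{\mu_0}_K(x)\bigr)\, dm_X(x) = 0$ with a nonnegative integrand. Hence $U^{\mu_0}_K = C_K$ holds $m_X$-almost everywhere, and the real work is to upgrade this to equality at every point. Here I would combine the upper semi-continuity of $U^{\mu_0}_K$ (from the lemma preceding this theorem) with the strict positivity of $m_X$. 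Arguing by contradiction, if $U^{\mu_0}_K(x_0) < C_K$ at some $x_0$, then upper semi-continuity produces an open neighborhood $\mathcal{O}$ of $x_0$ on which $U^{\mu_0}_K$ is bounded away from $C_K$ by a fixed gap; since $m_X(\mathcal{O})>0$ by strict positivity, this makes $\int_X \bigl(C_K - U^{\mu_0}_K\bigr)\, dm_X$ strictly positive, a contradiction. Thus $U^{\mu_0}_K\equiv C_K$, which is (1).

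The only subtlety in the routine part is the Fubini interchange in the averaging identity, which relies on the uniform upper bound \eqref{K-admiss-eq1}; the decisive step, however, is the passage from almost-everywhere equality to equality everywhere, where upper semi-continuity of the potential and the strict positivity of the invariant measure $m_X$ are both essential and cannot be dropped.
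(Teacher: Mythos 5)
Your proposal is correct and follows essentially the same route as the paper: the equivalence of (1) and (2) via Lemma \ref{lem_const}, the averaging identity $\int_X U^{\mu}_K\,dm_X = C_K$ obtained by Fubini together with Lemma \ref{lm:ck} to show the infimum in \eqref{minprob} equals $C_K$ and is attained at $m_Y$, and the upgrade from $m_X$-a.e.\ equality to everywhere equality using upper semi-continuity of the potential and strict positivity of $m_X$. The only cosmetic difference is that you argue $(2)\Rightarrow(3)$ directly where the paper phrases it as a contradiction; the mathematical content is identical.
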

\begin{proof} 

$(1) \iff (2)$.  Lemma \ref{lem_const} shows the equivalence of $(1)$ and $(2)$.\\

$(2) \Longrightarrow (3)$.  Proceed by contradiction and assume that there exists 
$\nu\in \mathcal{M}(Y)$ such that 
\begin{equation} \label{contra1}
\sup_{x\in X} U^{\mu_0}_K(x)>\sup_{x\in X} U^{\nu}_K(x).
\end{equation}
The definitions of $U_K^{\nu}$ and $C_K$, along with \eqref{contra1}, yield
\begin{align*}
C_K = \sup_{x\in X} U^{\mu_0}_K(x)>\sup_{x\in X} U^{\nu}_K(x) 
&\geq \int_{X} U_K^{\nu}(x) dm_X(x)\\
&=\int_{X} \int_{Y}  K(x,y)d\nu(y)dm_X(x)\\
&= \int_{Y} \int_{X} K(x,y) dm_X(x)d\nu(y)\\
& = \int_{Y}C_K  d\nu(y)= C_K.
\end{align*}
Thus, $C_K>C_K$ gives the desired contradiction.\\

$(3) \Longrightarrow (1)$. 
{Lemma \ref{rem:ck}} states that $U_K^{m_Y}(x) = C_K$ for all $x\in X$.
The previously proven implication $(2) \implies (3)$ shows that
\begin{equation} \label{gamma-minimizer-eq}
\inf_{\mu \in \mathcal{M}(Y)}\sup_{x\in X}U^{\mu}_K(x)=
\sup_{x\in X}U^{m_Y}_K(x)=C_K.
\end{equation}
The assumption (3) together with \eqref{gamma-minimizer-eq} gives
$$\sup_{x\in X}U^{\mu_0}_K(x)=
\inf_{\mu \in \mathcal{M}(Y)}\sup_{x\in X}U^{\mu}_K(x)=
\sup_{x\in X}U^{m_Y}_K(x)=C_K.$$

We proceed by contradiction and assume that $U^{\mu_0}_K$ is not constant, so that 
there exists $x_0\in X$ and $\epsilon>0$ with $U_K^{\mu_0}(x_0) <C_K - 2\epsilon$.
Since $U_K^{\mu_0}(x)$ is upper semi-continuous, there exists an open neighborhood $B\subset X$ containing $x_0$ such that 
$$\forall x \in B, \ \ \ U_K^{\mu_0}(x)<C_K-\epsilon.$$
Since $m_X$ is strictly positive, we have $m_X(B)>0$.
Thus,
 \begin{align}
\int_{X} U_K^{\mu_0}(x)dm_{X}(x)
&=\int_B U_K^{\mu_0}(x)dm_{X}(x)
+\int_{X\backslash B} U_K^{\mu_0}(x)dm_X(x)\notag\\
&\leq(C_K-\epsilon)m_X(B)
+C_Km_X(X \backslash B)\notag\\
&=C_K-\epsilon m_X(B)<C_K. \label{min-thm-contra-eq1}
\end{align}
On the other hand, the definitions of $U^{\mu_0}_K$ and $C_K$ along with the Fubini-Tonelli theorem give
\begin{align}
\int_{X} U_K^{\mu_0}(x)dm_X(x)
&=\int_{X} \int_{Y} K(x,y) d \mu_0(y) dm_X(x)\notag\\
&=\int_{Y} \int_{X} K(x,y)  dm_X(x)d \mu_0(y) \notag\\
&=\int_{Y} C_K d\mu_0(y)=C_K. \label{min-thm-contra-eq2}
\end{align}
Equations \eqref{min-thm-contra-eq1} and \eqref{min-thm-contra-eq2} yield the
desired contradiction $C_K<C_K$.
\end{proof}

Note that the assumption of upper semi-continuity was only needed for the proof of
the implication $(3) \implies (1)$.   Theorem \ref{thm_argmin}, together with Lemma \ref{rem:ck}, gives the following corollary.  

\begin{corollary} \label{inv-meas-min}
The invariant measure $m_Y \in \mathcal{M}(Y)$ satisfies
$$\inf_{\mu \in \mathcal{M}(Y)}\sup_{x\in X}U^{\mu}_K(x)=
\sup_{x\in X}U^{m_Y}_K(x).$$
In other words, the invariant measure $\mu = m_Y$ is a minimizer of \eqref{minprob}.
\end{corollary}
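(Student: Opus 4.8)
The plan is to obtain the corollary as an immediate consequence of the equivalence proved in Theorem \ref{thm_argmin}, using Lemma \ref{rem:ck} to verify the relevant hypothesis for the specific measure $\mu_0 = m_Y$. Since all the analytic work (the upper semi-continuity estimate, the strict positivity of $m_X$, and the Fubini--Tonelli manipulations) has already been carried out in establishing Theorem \ref{thm_argmin}, no new estimates are needed here; the corollary is purely a matter of applying the theorem to a distinguished choice of measure.

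Concretely, I would proceed as follows. First, observe that $m_Y \in \mathcal{M}(Y)$, so it is a legitimate competitor in the infimum defining \eqref{minprob}. Next, invoke Lemma \ref{rem:ck}, which (under the standing assumption that $K$ is an admissible kernel with constant $C_K$) asserts that $U_K^{m_Y}(x) = C_K$ for all $x \in X$. In particular the potential function $U_K^{m_Y}$ is constant on $X$, so condition (1) of Theorem \ref{thm_argmin} holds with $\mu_0 = m_Y$. Finally, apply the implication $(1) \Longrightarrow (3)$ of Theorem \ref{thm_argmin} with this choice of $\mu_0$, which yields exactly
$$
\inf_{\mu \in \mathcal{M}(Y)} \sup_{x \in X} U_K^{\mu}(x) = \sup_{x \in X} U_K^{m_Y}(x),
$$
the asserted statement. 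Equivalently, combining Lemma \ref{rem:ck} with this equation gives that the common value of the two sides is $C_K$.

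There is essentially no obstacle, since the substantive content lives in Theorem \ref{thm_argmin}; the only point requiring a moment's care is to confirm that the admissibility hypotheses on $K$ (conditions \eqref{K-admiss-eq1}--\eqref{K-sym-O}) remain in force so that both Lemma \ref{rem:ck} and Theorem \ref{thm_argmin} are applicable. These are inherited as standing assumptions of the section, and it is precisely the $G$-invariance \eqref{K-sym-O} together with the invariance of $m_X$ that guarantees, via Lemma \ref{lm:ck} and Lemma \ref{rem:ck}, that the invariant measure produces a constant potential and is therefore the natural minimizer.
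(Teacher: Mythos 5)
Your proposal is correct and follows exactly the paper's route: the paper derives the corollary by noting (via Lemma \ref{rem:ck}) that $U_K^{m_Y}$ is the constant $C_K$ and then invoking the implication $(2)\Longrightarrow(3)$ of Theorem \ref{thm_argmin}. No gaps; your remark about checking that the admissibility hypotheses remain in force is a fine, if routine, point of care.
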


\subsection{Optimal distributions for subspace actions} \label{opt-dist-sec-grass}
We now specialize Theorem \ref{thm_argmin} to the problems \eqref{kacz-s-prob} and \eqref{kacz-log-prob} of
determining which distributions give minimal Kaczmarz bounds for the algorithm \eqref{ff-kacz}.  
We consider the case when $X=\mathbb{S}^{d-1}$, $Y=G(k,d)$, and $G$ is either 
the orthogonal group $\mathcal{O}(d)$ or unitary group $\mathcal{U}(d)$ depending on whether ${\mathbb{H}} = \R$ or $\mathbb{C}$.   

If $X=\mathbb{S}^{d-1}$ is endowed with the norm metric $d_X(u,v)=\|u - v\|$ from $\mathbb{H}^d$, then $G$ acts isometrically on $X$, and we 
let $\sigma_{d-1}= m_X$ denote the invariant measure as in \eqref{inv-haar-meas}.  
Next, endow $Y=G(d,k)$ with a metric $d_Y$ as follows.  Given subspaces $V,W\in G(k,d)$, let $P_V, P_W$ denote the matrix representations (with respect to the canonical basis) of the orthogonal projections onto $V$ and $W$ respectively, and define the metric $d_Y(V,W) =\|P_V-P_W\|_{\rm Frob}$ on $G(k,d)$. 
The group $G$ acts isometrically on $Y$ with this metric, and we 
let $\Gamma_{k,d} = m_Y \in \mathcal{M}(G(k,d))$ denote the invariant measure as in \eqref{inv-haar-meas}.
In particular, the invariant measure $\Gamma_{k,d}$ satisfies $$\forall S \in \mathcal{B}(G(k,d)), \ \ \ \forall g \in G \ \ \ \Gamma_{k,d}( g(S)) =  \Gamma_{k,d}(S),$$
where $g(S) = \{ g(W) : W \in S \}$.   

The following examples show that the kernels associated to problems \eqref{kacz-s-prob} and \eqref{kacz-log-prob} are admissible as in Definition \ref{admiss-ker-def}, and hence can be treated using the results of Section \ref{homog-space-sec}.

\begin{example}  \label{kernel-examp1}
Fix $s>0$ and define $K_{\alpha_s}:\mathbb{S}^{d-1} \times G(k,d) \to \R \cup \{- \infty\}$ by 
\begin{equation*}
K_{\alpha_s}(x,W) = \left( 1 - \|P_W(x)\|^2 \right)^s.
\end{equation*}
Then $K_{\alpha_s}$ is an admissible kernel.  Conditions \eqref{K-admiss-eq1}, \eqref{K-admiss-eq3}, and \eqref{K-sym-O} are easy to verify, and \eqref{K-admiss-eq2} holds since $0 \leq K_{\alpha_s}(x,W) \leq 1$ holds for all $x \in \mathbb{S}^{d-1}$ and $W \in G(k,d)$.
\end{example}

\begin{example}  \label{kernel-examp2}
Define $K_{\alpha_{\log}}:\mathbb{S}^{d-1} \times G(k,d) \to \R \cup \{- \infty\}$ by 
\begin{equation} \label{K-alpha-log-kermel-def}
K_{\alpha_{\log}}(x,W) = \log(1-\|P_W(x)\|^2).
\end{equation}
Then $K_{\alpha_{\log}}$ is an admissible kernel.
Conditions \eqref{K-admiss-eq3} and \eqref{K-sym-O}  are easy to verify, and \eqref{K-admiss-eq1} holds since $- \infty \leq K_{\alpha_{\log}}(x,W) \leq 0$
holds for all $x \in \mathbb{S}^{d-1}$ and $W \in G(k,d)$.  The next result, Lemma \ref{log-adm-integral-lemma}, verifies the condition \eqref{K-admiss-eq2} by direct computation.
\end{example}

\begin{lemma} \label{log-adm-integral-lemma}
Fix $1 \leq k<d$ and let $K_{\alpha_{\log}}$ be as in \eqref{K-alpha-log-kermel-def}.  Then $K_{\alpha_{\log}}$ satisfies \eqref{K-admiss-eq2}.  In other words,
\begin{equation} \label{log-adm-integral-lemma-eq}
\forall W \in G(k,d), \ \ \  \int_{\mathbb{S}^{d-1}} \left| \log( 1 - \|P_W(x)\|^2) \right| d \sigma_{d-1}(x)  < \infty.
\end{equation}
\end{lemma}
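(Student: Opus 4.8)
The plan is to use the rotational invariance of $\sigma_{d-1}$ to reduce to a fixed coordinate subspace, rewrite the integrand in terms of the squared length of the component of $x$ orthogonal to $W$, and then pass to a one-dimensional integral in the principal angle, where the logarithmic singularity can be controlled explicitly. First I would observe that the integral in \eqref{log-adm-integral-lemma-eq} does not depend on $W$. Indeed, given $W,W'\in G(k,d)$, transitivity provides $g\in G$ with $g(W)=W'$; since $P_{W'}=g\,P_W\,g^{-1}$ and $g$ is isometric, we have $\|P_{W'}(x)\|=\|P_{W}(g^{-1}x)\|$, and the $G$-invariance of $\sigma_{d-1}$ from \eqref{inv-haar-meas} lets the substitution $x\mapsto gx$ identify the two integrals. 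It therefore suffices to take $W=\mathrm{span}(e_1,\dots,e_k)$, for which $1-\|P_W(x)\|^2=\sum_{j=k+1}^{d}|x_j|^2=\|P_{W^\perp}(x)\|^2$.

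Next I would decompose each $x\in\mathbb{S}^{d-1}$ according to the orthogonal splitting $\mathbb{H}^d=W\oplus W^\perp$: write $x=(\cos\theta)\,\xi+(\sin\theta)\,\eta$ with $\xi$ a unit vector in $W$, $\eta$ a unit vector in $W^\perp$, and $\theta\in[0,\pi/2]$, so that $\|P_{W^\perp}(x)\|^2=\sin^2\theta$. Under this parametrization $\sigma_{d-1}$ disintegrates as the product of the uniform measures on the unit spheres of $W$ and $W^\perp$ with the one-dimensional measure proportional to $(\cos\theta)^{a-1}(\sin\theta)^{b-1}\,d\theta$ on $[0,\pi/2]$, where $(a,b)=(k,d-k)$ in the real case and $(a,b)=(2k,2(d-k))$ in the complex case (equivalently, $\|P_{W^\perp}(x)\|^2$ follows a Beta distribution). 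Hence, up to the positive normalizing constant, the integral in \eqref{log-adm-integral-lemma-eq} equals
$$\int_0^{\pi/2} \bigl|\log(\sin^2\theta)\bigr|\,(\cos\theta)^{a-1}(\sin\theta)^{b-1}\,d\theta.$$

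Finally I would verify that this one-dimensional integral is finite. The integrand is continuous on $(0,\pi/2]$, and near $\theta=\pi/2$ the factor $\bigl|\log(\sin^2\theta)\bigr|$ tends to $0$, so the only possible difficulty is the logarithmic singularity at $\theta=0$. There $\bigl|\log(\sin^2\theta)\bigr|\sim 2|\log\theta|$ while $(\sin\theta)^{b-1}\sim\theta^{b-1}$, and the hypothesis $k<d$ forces $b\ge 1$, hence $b-1\ge 0>-1$; since $\int_0^1\theta^{b-1}|\log\theta|\,d\theta<\infty$ whenever $b-1>-1$, the integral converges. I expect this last step — confirming that the polynomial vanishing of the angular measure factor $(\sin\theta)^{b-1}$ dominates the logarithmic blow-up at $\theta=0$ — to be the only point requiring genuine care, and it is precisely where $k<d$ is essential: if $k=d$ then $W^\perp=\{0\}$, the argument of the logarithm vanishes identically, and \eqref{log-adm-integral-lemma-eq} would fail. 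The reduction and disintegration steps are routine consequences of invariance.
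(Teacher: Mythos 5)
Your proposal is correct and follows essentially the same route as the paper: reduce by invariance to a coordinate subspace, disintegrate the uniform measure on the sphere with respect to the splitting $W\oplus W^{\perp}$, and observe that the resulting one-dimensional integral $\int_0^1 \rho^{\,b-1}|\log\rho|\,d\rho$ (your $\rho=\sin\theta$) converges because $k<d$ forces the exponent to be at least $0>-1$. The paper carries out the disintegration by iterating an explicit spherical change-of-variables formula rather than quoting the Beta-distribution fact, but the computation is the same one in a different variable.
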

\begin{proof}
We begin with the real case $\mathbb{H}^d = \mathbb{R}^d$.  In this case, note that the measure $\sigma_{d-1}$ is defined by
\begin{equation} \label{normalized-hausdorff}
\forall E \in \mathcal{B}(\mathbb{S}^{d-1}), \ \ \ \sigma_{d-1}(E) = \frac{H_{d-1}(E)}{H_{d-1}(\mathbb{S}^{d-1})},
\end{equation}
where $H_{d-1}$ denotes the $(d-1)$-dimensional Hausdorff measure.

Fix $1 \leq k < d$.  Given $x= (x_1, \cdots x_d) \in \mathbb{H}^d$ denote $x^{\prime} = (x_1, \cdots, x_{k}) \in \mathbb{H}^{k}$ and $x^{\prime\prime} = (x_{k+1}, \cdots, x_d) \in \mathbb{H}^{d-k}$.  So that $x = (x^{\prime}, x^{\prime \prime})$ and
$\|x\|^2 = \|x^{\prime}\|^2 + \|x^{\prime \prime}\|^2$.
Using $\mathcal{O}(d)$-invariance of $\sigma_{d-1}$, we may assume without loss of generality that $W$ is the span of the first
$k$ canonical basis vectors, so that $P_W(x) = x^{\prime}$.  When $x \in \mathbb{S}^{d-1}$ we have $1 - \|P_W(x)\|^2 = 1 - \|x^{\prime}\|^2 = \|x^{\prime \prime}\|^2.$

It will be useful to recall the following change of variables formula in spherical coordinates, e.g., see Appendix D.2 in \cite{G},
$$\int_{R \mathbb{S}^{d-1}} f(x) dH_{d-1}(x) 
= \int_{-R}^R \left( \int_{\sqrt{R^2 - x_1^2} \ \mathbb{S}^{d-2}} f(x_1, x_2, \cdots, x_d) dH_{d-2}(x_2, \cdots, x_d) \right) \frac{R \ d x_1}{\sqrt{R^2 - x_1^2}}.$$
A repeated application of this leads to the following
\begin{equation} \label{sphere-coord-eq}
\int_{\mathbb{S}^{d-1}} f(x) dH_{d-1}(x) = 
\int_{\mathbb{B}_{d-k}} \left( \int_{\sqrt{1- \|x^{\prime \prime} \|^2} \thinspace \mathbb{S}^{k-1}} f(x^{\prime}, x^{\prime \prime}) \ dH_{k-1}(x^{\prime}) \right)  
\frac{dH_{d-k} (x^{\prime \prime})}{\sqrt{1- \|x^{\prime \prime} \|^2}},
\end{equation}
where $\mathbb{B}_{d-k} = \{ x^{\prime \prime} \in \mathbb{H}^{d-k} : \| x^{\prime \prime} \| \leq 1\}$ is the unit-ball in $\mathbb{H}^{d-k}$.

Using \eqref{normalized-hausdorff} and \eqref{sphere-coord-eq} yields
\begin{align}
H_{d-1}(\mathbb{S}^{d-1}) \int_{\mathbb{S}^{d-1}} & \log(1 - \|P_W(x) \|^2) d\sigma_{d-1}(x) = 2 \int_{\mathbb{S}^{d-1}} \log \|x^{\prime \prime} \| \ dH_{d-1}(x) \notag \\
&= 2\int_{\mathbb{B}_{d-k}} \left( \int_{ \sqrt{1 - \|x^{\prime \prime}\|^2} \thinspace \mathbb{S}^{k-1}} \log \|x^{\prime \prime}\| \ dH_{k-1}(x^{\prime}) \right)  
 \frac{dH_{d-k}(x^{\prime \prime})}{\sqrt{1- \|x^{\prime \prime} \|^2}} \notag \\
 &=2 \int_{\mathbb{B}_{d-k}} H_{k-1}(\sqrt{1- \|x^{\prime \prime} \|^2} \thinspace \mathbb{S}^{k-1}) \log \|x^{\prime \prime}\|
 \  \frac{dH_{d-k} (x^{\prime \prime})}{\sqrt{1- \|x^{\prime \prime} \|^2}} \notag \\
  &=2 H_{k-1}(\mathbb{S}^{k-1}) \int_{\mathbb{B}_{d-k}} \left(\sqrt{1- \|x^{\prime \prime} \|^2} \right)^{k-{1}} \log \|x^{\prime \prime}\|
 \ \frac{dH_{d-k} (x^{\prime \prime})}{\sqrt{1- \|x^{\prime \prime} \|^2}} \notag \\
  &= 2H_{k-1}(\mathbb{S}^{k-1}) \int_{\mathbb{B}_{d-k}} \left(\sqrt{1- \|x^{\prime \prime} \|^2} \right)^{k-{2}} \log \|x^{\prime \prime}\| \
  dH_{d-k} (x^{\prime \prime}) \notag \\
  &= 2H_{k-1}(\mathbb{S}^{k-1}) H_{d-k-1}(\mathbb{S}^{d-k-1}) \ \int_0^1 (1- \rho^2)^{\frac{k-2}{2}} \ \rho^{d-k-1} \ \log \rho \ d \rho. \label{log-int-adm-lem-eq}
    \end{align}
Since $\int_0^1\rho^a \log \rho \ d\rho$ is finite when $a>-1,$ it can be checked that the integral \eqref{log-int-adm-lem-eq} is finite when $1\leq k < d$.
Thus, \eqref{log-adm-integral-lemma-eq} holds when $\mathbb{H}^{d} = \R^d$.

The complex case $\mathbb{H}^d=\C^d$ follows from the real case by identifying $\mathbb{C}$ with $\mathbb{R}^2$.   
In particular, identify $\mathbb{S}^{d-1}_{\mathbb{C}}$ with $\mathbb{S}^{2d-1}_{\mathbb{R}}$, identify $G(k,\C^d)$ with $G(2k,\R^{2d})$,
identify Borel sets in $\C^d$ with  Borel sets in $\R^{2d}$,
and identify $\sigma_{d-1}(E)$ with $H_{2d-1}(E)/H_{2d-1}(\mathbb{S}^{2d-1})$.  With these identifications, in the complex case there holds
\begin{align*}
\int_{\mathbb{S}^{d-1}} & \log(1 - \|P_W(x) \|^2) d\sigma_{d-1}(x) \\
&= \frac{2H_{2k-1}(\mathbb{S}^{2k-1}) H_{2d-2k-1}(\mathbb{S}^{2d-2k-1})}{H_{2d-1}(\mathbb{S}^{2d-1})} \ \int_0^1 (1- \rho^2)^{\frac{2k-2}{2}} \ \rho^{2d-2k-1} \ \log \rho \ d \rho,
\end{align*}
which is finite when $1 \leq k <d$.
\end{proof}

The next result is a corollary of Theorem \ref{thm_argmin} and Corollary \ref{inv-meas-min} when $X=\mathbb{S}^{d-1}$, $Y=G(k,d)$.
Since a measure $\mu \in \mathcal{M}(G(k,d))$ may be associated to a random subspace $W$, 
we shall say that $\mu$ has Kaczmarz bounds $\alpha_s$ and $\alpha_{\log}$ if the associated random subspace $W$ has these Kaczmarz bounds.

\begin{corollary}\label{cor_argmin}
Let $\mu \in \mathcal{M}(G(k,d))$.
\begin{enumerate}
\item $\mu$ is a minimizer of \eqref{kacz-s-prob} if and only if
the Kaczmarz bound $\alpha_s$ is tight.
\item $\mu$ is a minimizer of \eqref{kacz-log-prob} if and only if
the logarithmic Kaczmarz bound $\alpha_{\log}$ is tight.
\item The invariant measure $\Gamma = \Gamma_{k,d} \in G(k,d)$ minimizes
both quantities \eqref{kacz-s-prob} and \eqref{kacz-log-prob}.
\end{enumerate}
\end{corollary}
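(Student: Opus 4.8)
The plan is to recognize each of the two minimization problems \eqref{kacz-s-prob} and \eqref{kacz-log-prob} as a concrete instance of the abstract problem \eqref{minprob}, and then to invoke Theorem \ref{thm_argmin} and Corollary \ref{inv-meas-min}. I would take $X = \mathbb{S}^{d-1}$, $Y = G(k,d)$, and $G$ the orthogonal or unitary group, so that the invariant measure $m_Y$ is $\Gamma_{k,d}$. For part (1) I would use the kernel $K_{\alpha_s}$ from Example \ref{kernel-examp1}, which is admissible, and observe that its potential function is $U^{\mu}_{K_{\alpha_s}}(x) = \int_{G(k,d)} (1 - \|P_W(x)\|^2)^s \, d\mu(W) = \mathbb{E}[(1-\|P_W(x)\|^2)^s]$; hence $\sup_{x} U^{\mu}_{K_{\alpha_s}}(x)$ is exactly the quantity in \eqref{kacz-s-prob}, so \eqref{kacz-s-prob} is the problem \eqref{minprob} for this kernel. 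For part (2) I would do the same with $K_{\alpha_{\log}}$ from Example \ref{kernel-examp2}, whose admissibility is guaranteed by Lemma \ref{log-adm-integral-lemma}, and whose potential is $U^{\mu}_{K_{\alpha_{\log}}}(x) = \mathbb{E}[\log(1 - \|P_W(x)\|^2)]$, so that \eqref{kacz-log-prob} coincides with \eqref{minprob} for $K_{\alpha_{\log}}$.

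The crux of the argument is to translate the tightness condition of Definition \ref{kacz-bnd-def} into the statement that the potential function is constant, after which Theorem \ref{thm_argmin} applies verbatim. For the order-$s$ bound, since $t \mapsto t^{1/s}$ is increasing on $[0,\infty)$ and $U^{\mu}_{K_{\alpha_s}} \geq 0$, the definition of $\alpha_s$ gives $\alpha_s^s = \sup_{x} U^{\mu}_{K_{\alpha_s}}(x)$. The Kaczmarz bound is tight precisely when equality holds in \eqref{kacz-def} for every $x$, i.e. when $U^{\mu}_{K_{\alpha_s}}(x) = \alpha_s^s$ for all $x \in \mathbb{S}^{d-1}$; since $\alpha_s^s$ is the supremum of $U^{\mu}_{K_{\alpha_s}}$, this is exactly the assertion that $U^{\mu}_{K_{\alpha_s}}$ is constant. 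An identical argument using the monotonicity of $\exp$ shows that $\alpha_{\log}$ is tight if and only if $U^{\mu}_{K_{\alpha_{\log}}}$ is constant.

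Given this translation, parts (1) and (2) are immediate from the equivalence $(1) \iff (3)$ of Theorem \ref{thm_argmin}: a measure $\mu$ minimizes \eqref{minprob} if and only if its potential function is constant, so $\mu$ minimizes \eqref{kacz-s-prob} if and only if $\alpha_s$ is tight, and $\mu$ minimizes \eqref{kacz-log-prob} if and only if $\alpha_{\log}$ is tight. For part (3) I would appeal directly to Corollary \ref{inv-meas-min}, which states that the invariant measure $m_Y = \Gamma_{k,d}$ minimizes \eqref{minprob} for any admissible kernel; applying it to $K_{\alpha_s}$ and to $K_{\alpha_{\log}}$ shows that $\Gamma_{k,d}$ minimizes both \eqref{kacz-s-prob} and \eqref{kacz-log-prob} simultaneously.

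There is no serious obstacle here; the entire statement is a specialization of the abstract theory of Section \ref{homog-space-sec}, and all of the analytic work (admissibility of the kernels, and in particular the integrability estimate of Lemma \ref{log-adm-integral-lemma}) has already been carried out. The only point that requires a moment of care is the elementary but essential observation that a function which equals its own supremum at every point must be constant, which is what converts the ``equality for all $x$'' form of tightness into the ``constant potential'' hypothesis of Theorem \ref{thm_argmin}.
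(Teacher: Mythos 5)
Your proposal is correct and follows exactly the route the paper intends: the paper presents Corollary \ref{cor_argmin} as an immediate specialization of Theorem \ref{thm_argmin} and Corollary \ref{inv-meas-min} to $X=\mathbb{S}^{d-1}$, $Y=G(k,d)$ with the admissible kernels $K_{\alpha_s}$ and $K_{\alpha_{\log}}$ of Examples \ref{kernel-examp1} and \ref{kernel-examp2}. Your explicit translation of tightness in Definition \ref{kacz-bnd-def} into constancy of the potential function (via monotonicity of $t\mapsto t^{1/s}$ and of $\exp$) is precisely the step the paper leaves implicit, and it is carried out correctly.
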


Corollary \ref{cor_argmin} shows that random subspaces chosen according to the invariant distribution on $G(k,d)$ are minimizers for the problems \eqref{kacz-s-prob} and \eqref{kacz-log-prob}.  
The next example shows that 
the minimizer for \eqref{kacz-s-prob} is not necessarily unique; we shall further discuss uniqueness
for \eqref{kacz-log-prob} in Section \ref{unique-sec}.
\begin{example} \label{non-unique-example}
Let $\{u_n\}_{n=1}^{N} \subset \mathbb{H}^d$ be a unit-norm tight frame for $\mathbb{H}^d$ when $d \geq 2$.  Namely, suppose that each $u_n$ satisfies $\|u_n\|=1$
and that
$$\forall x \in \R^d, \ \ \ \sum_{n=1}^N | \langle x, u_n \rangle|^2 = \frac{N}{d} \|x\|^2.$$
For examples of unit-norm tight frames see \cite{CK10}.
Let $W\in G(1,d)$ be the random subspace defined by $P[W = {\rm span} (u_n) ] = 1/N$ for each $1 \leq n \leq N$.  Then,
$$\forall x\in \mathbb{S}^{d-1}, \ \ \ \mathbb{E} ( 1 - \|P_W(x)\|^2) = 1 - \frac{1}{d}>0.$$ 
In particular, $W$ has a tight Kaczmarz bound of order $s=1$, and by Corollary \ref{cor_argmin}, 
the measure associated to $W$ is a minimizer of \eqref{kacz-s-prob} but is not the invariant measure on $G(1,d)$.
\end{example}

\section{Uniqueness of optimal distributions in $G(1,d)$} \label{unique-sec}
This section addresses the uniqueness of the minimizers in \eqref{kacz-s-prob} and \eqref{kacz-log-prob} when the random subspaces $W$ have dimension $k=1$, i.e., $W \in G(1,d)$.  The main goal of this section is to show that the invariant distribution $\Gamma_{1,d}\in \mathcal{M}(G(1,d))$ is the {\em unique} minimizer for the Kaczmarz problem \eqref{kacz-s-prob} when $0<s<1$, and for the logarithmic Kaczmarz problem \eqref{kacz-log-prob}.  
For perspective, Example \ref{non-unique-example} shows that minimizers for the Kaczmarz problem of order $s=1$ in \eqref{kacz-s-prob} need not be unique.

The following theorem is the main result of this section.
\begin{theorem}\label{thm:unique}
Consider the problems of minimizing \eqref{kacz-s-prob} and \eqref{kacz-log-prob} for $G(1,d)$, i.e., when $k=1$.
\begin{enumerate}
\item If $k=1$ and $0<s<1$ then the invariant distribution $\Gamma_{1,d} \in \mathcal{M}(G(1,d))$ is the unique minimizer of \eqref{kacz-s-prob}.
\item  If $k=1$ then the invariant distribution $\Gamma_{1,d} \in \mathcal{M}(G(1,d))$ is the unique minimizer of \eqref{kacz-log-prob}.
\end{enumerate}
\end{theorem}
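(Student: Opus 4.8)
The plan is to combine the characterization of minimizers from Corollary~\ref{cor_argmin} with a Funk--Hecke diagonalization of the underlying convolution operator, thereby reducing uniqueness to the non-vanishing of an explicit family of Gamma-function integrals. By Corollary~\ref{cor_argmin} (equivalently Theorem~\ref{thm_argmin}), a measure $\mu\in\mathcal{M}(G(1,d))$ minimizes \eqref{kacz-s-prob} (resp.\ \eqref{kacz-log-prob}) if and only if its potential $U^\mu_K$ is constant, in which case $U^\mu_K\equiv C_K$. Since $\Gamma_{1,d}$ is itself a minimizer, uniqueness is equivalent to showing that $U^\mu_K\equiv C_K$ forces $\mu=\Gamma_{1,d}$. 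Setting $\nu=\mu-\Gamma_{1,d}$, a signed measure with $\nu(G(1,d))=0$, the identity $U^\mu_K=U^{\Gamma_{1,d}}_K$ becomes $\int_{G(1,d)}K(x,W)\,d\nu(W)=0$ for every $x\in\mathbb{S}^{d-1}$. For $k=1$, writing $W=\mathrm{span}(w)$ gives $\|P_W(x)\|^2=|\langle x,w\rangle|^2$, so in the real case $K(x,W)=g(\langle x,w\rangle)$ with $g(t)=(1-t^2)^s$ (resp.\ $g(t)=\log(1-t^2)$), and $\nu$ lifts to an even signed measure $\widetilde\nu$ on $\mathbb{S}^{d-1}$. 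It thus suffices to prove that the spherical convolution operator $T_g\widetilde\nu(x)=\int_{\mathbb{S}^{d-1}}g(\langle x,w\rangle)\,d\widetilde\nu(w)$ is injective on even, mean-zero measures.

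Next I would diagonalize $T_g$ by the Funk--Hecke theorem: on degree-$n$ spherical harmonics it acts as multiplication by
\[
\lambda_n=c_{n,d}\int_{-1}^{1}g(t)\,C_n^{(\lambda)}(t)\,(1-t^2)^{\lambda-1/2}\,dt,\qquad \lambda=\tfrac{d-2}{2},\ \ c_{n,d}>0.
\]
Since $g$ is even, $\lambda_n=0$ for odd $n$, consistent with $\widetilde\nu$ having no odd harmonic part. Expanding $\widetilde\nu$ against spherical harmonics and applying $T_g$, the hypothesis $T_g\widetilde\nu\equiv0$ gives $\lambda_{2m}\widehat{\widetilde\nu}_{2m}=0$ for all $m$, while $\nu(G(1,d))=0$ gives $\widehat{\widetilde\nu}_0=0$. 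Hence, provided $\lambda_{2m}\neq0$ for every $m\geq1$, all harmonic moments of $\widetilde\nu$ vanish; since finite linear combinations of spherical harmonics are dense in $C(\mathbb{S}^{d-1})$, this forces $\widetilde\nu=0$ and $\mu=\Gamma_{1,d}$.

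The crux, and the step I expect to be the main obstacle, is proving $\lambda_{2m}\neq0$ for all $m\geq1$, i.e.\ that $J_{2m}(s)=\int_{-1}^{1}(1-t^2)^{s+\lambda-1/2}C_{2m}^{(\lambda)}(t)\,dt\neq0$. Using the Rodrigues formula for $C_{2m}^{(\lambda)}$ and integrating by parts $2m$ times evaluates $J_{2m}(s)$ as $J_{2m}(s)=c_m(s)/\Gamma(s+1-m)$, where $c_m$ is smooth with $c_m(0)\neq0$ and collects pole-free, nonzero Gamma factors. The factor $1/\Gamma(s+1-m)$ vanishes exactly when $s+1-m$ is a nonpositive integer, i.e.\ when $s$ is a nonnegative integer with $s\leq m-1$. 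For $0<s<1$ this never occurs, so $J_{2m}(s)\neq0$ for all $m$, proving part~(1). For the logarithmic problem, since $\log(1-t^2)=\frac{d}{ds}(1-t^2)^s\big|_{s=0}$, differentiating under the integral sign (justified by the integrability established in Lemma~\ref{log-adm-integral-lemma}) shows the logarithmic eigenvalue is proportional to $J_{2m}'(0)$; as $1/\Gamma(s+1-m)$ vanishes at $s=0$ for $m\geq1$ but satisfies $\frac{d}{dz}\frac{1}{\Gamma(z)}\big|_{z=1-m}=(-1)^{m-1}(m-1)!\neq0$, we get $J_{2m}'(0)=c_m(0)(-1)^{m-1}(m-1)!\neq0$, proving part~(2). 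The same formula explains why uniqueness fails at $s=1$: there $g(t)=1-t^2$ is a degree-two polynomial, so $J_{2m}=0$ for all $m\geq2$, giving $T_g$ an infinite-dimensional kernel, in accordance with Example~\ref{non-unique-example}.

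Finally, the complex case $\mathbb{H}=\mathbb{C}$ runs in parallel with $G(1,\mathbb{C}^d)=\mathbb{CP}^{d-1}$: the $\mathcal{U}(d)$-invariant convolution operator is diagonalized by the zonal functions $P_p^{(d-2,0)}(2|\langle x,w\rangle|^2-1)$ built from Jacobi polynomials, and the corresponding coefficients admit the same closed-form evaluation with a $1/\Gamma(s+1-p)$ factor, so the non-vanishing for $0<s<1$ and for the logarithmic kernel follows verbatim. The only genuinely delicate point in the whole argument is the explicit evaluation of $J_{2m}(s)$ together with the careful reading of its zero structure in $s$; the reduction and the diagonalization are comparatively routine once Funk--Hecke is in hand.
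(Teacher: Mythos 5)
Your argument is correct in outline, but it takes a genuinely different route from the paper. Both proofs share the same first step: by Theorem \ref{thm_argmin} (or Corollary \ref{cor_argmin}), any minimizer $\mu$ has $U^{\mu}_K\equiv C_K=U^{\Gamma_{1,d}}_K$, so uniqueness reduces to showing that the integral operator annihilating the mean-zero signed measure $\mu-\Gamma_{1,d}$ forces that measure to vanish. From there the paper does not touch spherical harmonics at all: it identifies $W\in G(1,d)$ with the rank-one projection $xx^*\in\mathbb{H}^{d^2}$, observes that $\|xx^*-yy^*\|_{\rm Frob}^2=2-2\|P_W(x)\|^2$, so that the Kaczmarz kernels become (up to constants) the Riesz kernel $\|Y-Z\|^{2s}$ and the logarithmic kernel $\log\|Y-Z\|$ on a compact subset of Euclidean space, and then invokes the classical energy-positivity results of Bj\"orck ($0<s<1$) and Frostman (logarithmic case) for signed measures of total mass zero (Lemma \ref{pot-theory-lemma}). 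That route is shorter, treats $\mathbb{R}$ and $\mathbb{C}$ uniformly via the embedding, and outsources all the analysis to two classical lemmas. Your Funk--Hecke route is more computational but buys strictly more: the eigenvalue $\lambda_{2m}\propto \Gamma(s+1)/\Gamma(s+1-m)$ (I checked your claimed zero structure, e.g.\ for $d=3$, $m=1$ one gets $J_2(s)\propto -s\,\Gamma(s+1)/\Gamma(s+5/2)$, consistent with $c_1(0)\neq 0$) vanishes exactly when $s\in\{0,1,\dots,m-1\}$, so your method in fact yields uniqueness for every \emph{non-integer} $s>1$ and identifies the full kernel at integer $s$ (measures whose even harmonic moments up to degree $2s$ vanish, i.e.\ projective designs), which speaks directly to the open question posed at the end of Section \ref{unique-sec} and explains Example \ref{non-unique-example}. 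Two points deserve more care before this is a complete proof: (i) the closed-form evaluation of $J_{2m}(s)$ and the non-vanishing of $c_m(0)$ are asserted rather than derived --- they are standard (Rodrigues formula plus Beta integrals) but must be written out, including the justification of differentiating under the integral at $s=0$ for part (2); and (ii) the complex case is the weakest link, since $|\langle x,w\rangle|^2$ is not a function of a real inner product on $\mathbb{S}^{2d-1}$, so you genuinely need the zonal decomposition on $\mathbb{CP}^{d-1}$ via Jacobi polynomials $P^{(d-2,0)}_p$ rather than a ``verbatim'' repetition; the paper's embedding sidesteps this entirely by reducing $\mathbb{C}^d$ to $\mathbb{R}^{2d}$ at the level of the ambient matrix space.
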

The proof of Theorem  \ref{thm:unique} will proceed by identifying each subspace $W \in G(1,d)$ with an element of $\mathbb{H}^{d^2}$, and then applying tools from potential theory.  In particular, we shall make use of the following classical potential theoretic lemma.  Note that this lemma involves {\em signed} measures.
Recall that $\int_E \int_E f(x,y) d\eta(x) d\eta(y)$ is absolutely integrable if $$\int_E \int_E |f(x,y)| d |\eta|(x) d|\eta|(y) < \infty.$$

\begin{lemma} \label{pot-theory-lemma}
Suppose that  ${E} \subset \mathbb{H}^d$ is compact and that $\eta$ is a signed Borel measure on $E$ with zero total mass $\int_{{E}} d \eta =0.$   
\begin{enumerate}
\item If $0<s<1$ and $\int_{{E}} \int_{{E}} \| x - y \|^{2s} d \eta(x) d\eta (y) \geq 0,$ then the signed measure $\eta$ is identically zero.
\item If $\int_{{E}} \int_{{E}} \log \| x - y \| d \eta(x) d\eta (y) =0$ and is absolutely integrable, then the signed measure $\eta$ is identically zero.
\end{enumerate}
\end{lemma}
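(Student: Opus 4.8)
The plan is to recognize Lemma \ref{pot-theory-lemma} as the assertion that the Riesz kernel $\|x-y\|^{2s}$ (for $0<s<1$) and the logarithmic kernel $\log\|x-y\|$ are \emph{strictly conditionally negative definite}: on a signed measure $\eta$ of total mass zero the associated energy integral is automatically $\leq 0$, and it vanishes only for $\eta \equiv 0$. The engine throughout is a Fourier (Schoenberg-type) integral representation of the kernel, which converts the double integral into a manifestly signed Fourier-side quantity.

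For part (1) I would start from the representation, valid for $0<\beta<2$,
\[
\|t\|^{\beta} = c_{d,\beta}\int_{\R^d} \frac{1-\cos(\xi\cdot t)}{\|\xi\|^{d+\beta}}\, d\xi, \qquad c_{d,\beta}>0,
\]
and set $\beta=2s$. Substituting $t=x-y$ and interchanging the order of integration (justified below), the constant term $1$ integrates against $d\eta(x)\,d\eta(y)$ to zero because $\int_E d\eta=0$, leaving
\[
\int_E\int_E \|x-y\|^{2s}\,d\eta(x)\,d\eta(y) = -\,c_{d,2s}\int_{\R^d} \frac{|\widehat{\eta}(\xi)|^2}{\|\xi\|^{d+2s}}\, d\xi,
\]
where $\widehat{\eta}(\xi)=\int_E e^{i\xi\cdot x}\,d\eta(x)$ and I have used the identity $\int_E\int_E \cos(\xi\cdot(x-y))\,d\eta(x)\,d\eta(y)=|\widehat{\eta}(\xi)|^2\geq 0$. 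The right-hand side is $\leq 0$; combined with the hypothesis that the left-hand side is $\geq 0$, both sides must vanish, which forces $\widehat{\eta}\equiv 0$ almost everywhere, and hence $\eta\equiv 0$ by Fourier uniqueness.

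The crux is justifying the interchange and the finiteness of $\int_{\R^d} |\widehat{\eta}(\xi)|^2\,\|\xi\|^{-(d+2s)}\,d\xi$, and this is exactly where the hypotheses earn their keep. Near infinity there is no difficulty: $\widehat{\eta}$ is bounded by the total variation $|\eta|(E)$, and $\|\xi\|^{-(d+2s)}$ is integrable on $\{\|\xi\|\geq 1\}$. Near the origin the weight is singular, but since $E$ is compact we have $\int_E \|x\|\,d|\eta|(x)<\infty$, so $\widehat{\eta}$ is $C^1$ with $\widehat{\eta}(0)=\int_E d\eta=0$, whence $\widehat{\eta}(\xi)=O(\|\xi\|)$ and $|\widehat{\eta}(\xi)|^2\|\xi\|^{-(d+2s)}=O(\|\xi\|^{2-d-2s})$; this is integrable near $0$ precisely because $2s<2$. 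With this integrability established, Fubini--Tonelli applied to the nonnegative integrand $(1-\cos(\xi\cdot(x-y)))\|\xi\|^{-(d+2s)}$ on $E\times E\times \R^d$ (against $|\eta|\times|\eta|\times$ Lebesgue measure) legitimizes the interchange.

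For part (2) I would run the same scheme via the companion representation $\int_E\int_E \log\|x-y\|\,d\eta\,d\eta = -\,c_d\int_{\R^d}|\widehat{\eta}(\xi)|^2\|\xi\|^{-d}\,d\xi\leq 0$; alternatively, and perhaps more cleanly, deduce it from part (1) by a limiting argument, writing $\int_E\int_E\|x-y\|^{2s}\,d\eta\,d\eta = \int_E\int_E(\|x-y\|^{2s}-1)\,d\eta\,d\eta$, dividing by $2s$, and letting $s\to 0^+$ with $(\|x-y\|^{2s}-1)/(2s)\to\log\|x-y\|$ and dominated convergence (using the absolute-integrability hypothesis) to obtain $\int_E\int_E\log\|x-y\|\,d\eta\,d\eta\leq 0$ with equality only at $\eta\equiv 0$. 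Either way the hypothesis that this energy equals $0$ forces $\eta\equiv 0$. The main obstacle is the analytic bookkeeping around the singular weight at the origin --- making the estimate $\widehat{\eta}(\xi)=O(\|\xi\|)$ rigorous, controlling the borderline integrability in the logarithmic case, and passing the $s\to 0$ limit under the integral sign --- and these are exactly the points at which compactness of $E$ and the zero-mass normalization of $\eta$ are indispensable; the remainder reduces to a standard appeal to Fourier uniqueness.
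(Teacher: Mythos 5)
The paper does not actually prove this lemma: it is quoted from the literature (part (1) is Lemma 1 of Bj\"orck \cite{B56}, part (2) is Frostman \cite{Frost35}, cf.\ Fuglede \cite{Fug60b, Fug60}), with the complex case reduced to the real case. Your Fourier/Schoenberg argument for part (1) is precisely the classical proof behind that citation, and it is correct: the representation $\|t\|^{2s}=c_{d,2s}\int(1-\cos(\xi\cdot t))\|\xi\|^{-d-2s}d\xi$ is valid exactly for $0<2s<2$, Tonelli on the nonnegative integrand against $|\eta|\times|\eta|\times$Lebesgue is legitimate because the triple integral equals $c_{d,2s}^{-1}\int\int\|x-y\|^{2s}\,d|\eta|\,d|\eta|\leq c_{d,2s}^{-1}(\mathrm{diam}\,E)^{2s}|\eta|(E)^2<\infty$, and the zero-mass hypothesis kills the constant term, leaving $-c_{d,2s}\int|\widehat{\eta}|^2\|\xi\|^{-d-2s}d\xi\leq 0$; equality forces $\widehat{\eta}\equiv 0$ and hence $\eta\equiv 0$. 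Your $\widehat{\eta}(\xi)=O(\|\xi\|)$ estimate is a fine (if redundant) second justification of integrability near the origin.

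Part (2) as sketched has two soft spots. First, the ``companion representation'' with weight $\|\xi\|^{-d}$ cannot be taken at face value: $\int_{\|\xi\|\geq 1}|\widehat{\eta}(\xi)|^2\|\xi\|^{-d}d\xi$ is borderline at infinity (for $\eta=\delta_a-\delta_b$ it diverges), so the identity requires a renormalized kernel representation and a separate argument that the absolute-integrability hypothesis forces convergence of the Fourier-side integral --- this is exactly the delicate point Fuglede's papers address. Second, your limiting route correctly yields $\int\int\log\|x-y\|\,d\eta\,d\eta\leq 0$ by writing the $s$-energy as $\int\int\bigl(\|x-y\|^{2s}-1\bigr)d\eta\,d\eta$, dividing by $2s$, and using dominated convergence, but the assertion ``with equality only at $\eta\equiv 0$'' does not follow from the limit alone: a limit of nonpositive quantities being zero says nothing about any individual term. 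To close this you need one more step, e.g.\ note that $c_{d,2s}/(2s)\to c>0$ as $s\to 0^+$ and apply Fatou to conclude $\int|\widehat{\eta}(\xi)|^2\|\xi\|^{-d}d\xi\leq\liminf_{s\to 0}\int|\widehat{\eta}(\xi)|^2\|\xi\|^{-d-2s}d\xi=0$, whence $\widehat{\eta}\equiv 0$. With that addition (or with a direct appeal to the cited classical results, as the paper does), the argument is complete.
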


For the real case $\mathbb{H}^d = \mathbb{R}^d$, part (1) of Lemma \ref{pot-theory-lemma} appears as Lemma 1 in \cite{B56}, 
and part (2) appears as Lemma $3^{\prime}$ in Section 33 of \cite{Frost35}, cf. \cite{Fug60b, Fug60}.
The complex case $\mathbb{H}^d = \mathbb{C}^d$ follows from the real case by identifying measures on $\mathbb{C}^d$ with measures on $\mathbb{R}^{2d}$.

We are now ready to prove Theorem \ref{thm:unique}.\\

\noindent {\em Proof of Theorem \ref{thm:unique}.}
{\em Step I.} We begin by identifying each subspace $W \in G(1,d)$ with a unit-norm element of $\mathbb{H}^{d^2}$.
Define the following set of $d\times d$ matrices
$$\mathcal{E}=\{M\in \mathbb{H}^{d\times d}: M=xx^* \hbox{ for some } x\in\Sb^{d-1}\}.$$
With slight abuse of notation, we shall interchangeably think of $\mathcal{E}$ as a subset $\mathbb{H}^{d^2}$ and also as a set of $d\times d$ matrices.
Since the rank one projections $xx^*$ have Frobenius norm one, the elements in $\mathcal{E}$ are unit-norm when viewed as elements of $\mathbb{H}^{d^2}$, i.e., $\mathcal{E} \subset \mathbb{S}^{d^2-1}$.  It can also be verified that $\mathcal{E}$ is a compact subset of $\mathbb{H}^{d^2}$.

Define the map $\mathcal{L}: G(1,d) \to \mathcal{E}$ by $\mathcal{L}(W)=xx^*$, where $x\in W\cap \Sb^{d-1}$ is any unit-norm element of $W$.  This map 
does not depend on the choice of $x$, and so is well-defined.  
The map $\mathcal{L}$ is an isometry when $G(1,d)$ is endowed with the 
metric $d_Y$ from the beginning of Section \ref{opt-dist-sec-grass} and $\mathcal{E}$ is endowed with norm metric on $\mathbb{H}^{d^2}$. 
The map $\mathcal{L}$ is also bijective since $xx^*=yy^*$ implies $x=cy$ for some $|c|=1$.

Given a Borel probability measure $\mu \in \mathcal{M}(G(1,d))$, the map $\mathcal{L}$ induces a Borel probability measure 
$\mathcal{L}_{\mu} \in \mathcal{M}(\mathcal{E})$ defined by
$$\forall E \in \mathcal{B}(\mathcal{E}), \ \ \ \mathcal{L}_{\mu} (E) = \mu( \mathcal{L}^{-1}(E)),$$
where $\mathcal{L}^{-1}(E) = \{ W \in G(1,d) : \mathcal{L}(W) \in E \}$ is in $\mathcal{B}(G(1,d))$ since $\mathcal{L}$ is an isometry.
Likewise, if $S\in \mathcal{B}(G(1,d))$ then $\mathcal{L}(S) = \{ \mathcal{L}(s) : s \in S \} \in \mathcal{B}(\mathcal{E})$.
The bijectivity of $\mathcal{L}$ guarantees a one-to-one correspondence between $\mu$ and $\mathcal{L}_{\mu}$ since
$$\forall S\in \mathcal{B}(G(1,d)), \ \ \ \mu(S)=\mathcal{L}_{\mu}(\mathcal{L}(S)).$$

{\em Step II.}  Next, define the map $\Omega: \mathbb{S}^{d-1} \to G(1,d)$ by $\Omega(u) = {\rm span}(u)$.  The map $\Omega$ is surjective, and $\Omega(u) = \Omega(v)$
if and only if $u = c v$ for some unimodular scalar $c \in \mathbb{H}$ with $|c|=1$.
Let $W \in G(1,d)$ and $x \in \mathbb{S}^{d-1}$ be arbitrary, and pick any $y \in \mathbb{S}^{d-1}$ such that $W = \Omega (y)$.
Note that 
\begin{align}
\| \mathcal{L}(W) - \mathcal{L}(\Omega(x))\|^2 & = \| yy^* - xx^*\|^2_{\rm Frob}\notag\\
&= {\rm Tr} \left( (yy^* - xx^*)^*(yy^* - xx^*) \right)\notag\\
&=2 - {\rm Tr} (yy^*xx^*) - {\rm Tr} (xx^*yy^*) \notag \\
&= 2 - 2| \langle x,y \rangle |^2\notag\\
&= 2 - 2 \|P_W(x)\|^2.\label{frob-eq}
\end{align}

\vspace{.1in}

{\em Step III.}  To prove part (1) of Theorem \ref{thm:unique}, recall that the invariant measure $\Gamma=\Gamma_{1,d}\in \mathcal{M}(G(1,d))$ is a 
minimizer of \eqref{kacz-s-prob} by Corollary \ref{inv-meas-min}.  Suppose that $\nu \in \mathcal{M}(G(1,d))$ is also a minimizer of \eqref{kacz-s-prob}.   
By Theorem \ref{thm_argmin}, there exists a constant $C \in \R$ such that
\begin{equation} \label{nu-gamma-C-eq}
\forall x \in \mathbb{S}^{d-1}, \ \ \ \int_{G(1,d)} (1 - \|P_W(x) \|^2)^s d \nu(W)  = \int_{G(1,d)} (1 - \|P_W(x) \|^2)^s d \Gamma (W) = C.
\end{equation}
This will imply
\begin{equation} \label{L-C-eq}
\forall Z \in \mathcal{E}, \ \ \  \int_{\mathcal{E}} \|Y -Z \|^{2s} d \mathcal{L}_\nu(Y) =  \int_{\mathcal{E}} \|Y -Z \|^{2s} d \mathcal{L}_\Gamma(Y)  =  2^s C.
\end{equation}

Equation \eqref{L-C-eq} is a result of \eqref{frob-eq}, \eqref{nu-gamma-C-eq}, and change of variables. Indeed, for any $Z\in\mathcal{E}$, there exists $x\in\Sb^{d-1}$ such that $\mathcal{L}(\Omega(x))=Z$, and 
\begin{align}\notag
\int_{\mathcal{E}} \|Y -Z \|^{2s} d \mathcal{L}_\nu(Y)
=& \int_{\mathcal{E}} \|Y -\mathcal{L}(\Omega(x)) \|^{2s} d \mathcal{L}_\nu(Y) \\\notag
=& \int_{\mathcal{L}^{-1}(\mathcal{E})}\|\mathcal{L}(W)-\mathcal{L}(\Omega(x))\|^{2s} d \nu(W)\notag \\\label{equ:sconst}
=& 2^s\int_{G(1,d)} (1 - \|P_W(x) \|^2)^s d \nu(W) =2^sC.
\end{align}
An identical computation for $\mathcal{L}_\Gamma$ yields \eqref{L-C-eq}.

Now define the signed measure $\eta = \mathcal{L}_{\nu} - \mathcal{L}_{\Gamma}$ on the Borel subsets of $\mathcal{E}$.  Since $\mathcal{L}_{\nu}$ and $\mathcal{L}_{\Gamma}$ are probability measures supported on $\mathcal{E}$, $\eta$ has zero total mass $\int_{\mathcal{E}} d \eta =0$.  Also, by \eqref{L-C-eq},
\begin{align}
\int_{\mathcal{E}} \int_{\mathcal{E}} \| Z - Y\|^{2s} d \eta(Y) d \eta(Z) & =
\int_{\mathcal{E}} \left(  \int_{\mathcal{E}} \| Z - Y\|^{2s} d \mathcal{L}_\nu(Y) -  \int_{\mathcal{E}} \| Z - Y\|^{2s} d \mathcal{L}_{\Gamma}(Y)\right)d \eta(Z) \notag \\
&=\int_{\mathcal{E}} \left( 2^sC - 2^sC \right) d \eta(Z)  = 0. \label{energy-eta-nonzero}
\end{align}
By Lemma \ref{pot-theory-lemma}, equation \eqref{energy-eta-nonzero} implies that the signed measure $\eta$ is identically zero.  Thus $\mathcal{L}_{\nu} = \mathcal{L}_{\Gamma}$.  Since the correspondence between a measure
$\mu \in \mathcal{M}(G(1,d))$ and the measure $\mathcal{L}_\mu \in  \mathcal{M}(\mathcal{E})$ is bijective, it follows that $\nu = \Gamma$.  
This establishes part (1) of Theorem \ref{thm:unique}.\\

{\em Step IV.}  The proof of part (2) of Theorem \ref{thm:unique} proceeds  similarly as Step III. 
Suppose that $\nu \in \mathcal{M}(G(1,d))$ is also a minimizer of \eqref{kacz-log-prob}.   
By Theorem \ref{thm_argmin}, there exists a constant $C_1 \in \R$ such that
\begin{equation}  \label{nu-gamma-C-eq2}
\forall x \in \mathbb{S}^{d-1}, \ \ \ \int_{G(1,d)} \log(1 - \|P_W(x) \|^2) d \nu(W)  = \int_{G(1,d)} \log(1 - \|P_W(x) \|^2) d \Gamma (W) = C_1.
\end{equation}
This will imply
\begin{equation} \label{L-C-eq2}
\forall Z \in \mathcal{E}, \ \ \  \int_{\mathcal{E}}\log \|Y -Z \| d \mathcal{L}_\nu(Y) = \int_{\mathcal{E}} \log \|Y -Z \| d \mathcal{L}_\Gamma(Y)  = 2^{-1}( C_1 + \log 2).
\end{equation}
This is a similar argument as \eqref{equ:sconst}. For any $Z\in\mathcal{E}$, we find $x$ such that $\mathcal{L}(\Omega(x))=Z$, then \eqref{frob-eq} and \eqref{nu-gamma-C-eq2} imply that
\begin{align}\notag
\int_{\mathcal{E}} \log\|Y -Z \| d \mathcal{L}_\nu(Y) 
=& \int_{\mathcal{E}} \log\|Y -\mathcal{L}(\Omega(x)) \| d \mathcal{L}_\nu(Y) \\\notag
=& \int_{\mathcal{L}^{-1}(\mathcal{E})}\log\|\mathcal{L}(W)-\mathcal{L}(\Omega(x))\|d \nu(W)\notag \\\label{equ:logconst}
=& 2^{-1}\int_{G(1,d)} \log(2 - 2\|P_W(x) \|^2) d \nu(W) =2^{-1}(\log2+C_1).
\end{align}
An identical computation for $\mathcal{L}_\Gamma$ yields \eqref{L-C-eq2}.

Now define the signed measure $\eta = \mathcal{L}_{\nu} - \mathcal{L}_{\Gamma}$ on the Borel subsets of $\mathcal{E}$.  
Since $\mathcal{L}_{\nu}$ and $\mathcal{L}_{\Gamma}$ are probability measures supported on $\mathcal{E}$, $\eta$ has zero total mass $\int_{\mathcal{E}} d \eta =0$.  Also, by \eqref{L-C-eq},
\begin{align}
\int_{\mathcal{E}} \int_{\mathcal{E}} \log \| Z - Y\| d \eta(Y) d \eta(Z) & =
\int_{\mathcal{E}} \left(  \int_{\mathcal{E}} \log\| Z - Y\| d \mathcal{L}_\nu(Y) -  \int_{\mathcal{E}} \log\| Z - Y\| d \mathcal{L}_{\Gamma}(Y)\right)d \eta(Z) \notag \\
&=\int_{\mathcal{E}} \left( 2^{-1}(C_1+\log2) - 2^{-1}(C_1+\log2) \right) d \eta(Z)  = 0. \label{energy-eta-nonzero2}
\end{align}

Note that $\int_{\mathcal{E}} \int_{\mathcal{E}} \log \| Z - Y\| d \eta(Y) d \eta(Z)$ is absolutely integrable.  To see this it suffices to show that
$\int_{\mathcal{E}}\left|\log\|Z-Y\|\right|d\mathcal{L}_{\nu}(Y)<\infty$ and $\int_{\mathcal{E}}\left|\log\|Z-Y\|\right|d\mathcal{L}_{\Gamma}(Y)<\infty$.
A similar computation as in \eqref{equ:logconst} shows that
$$\int_{\mathcal{E}}\left|\log\|Z-Y\|\right|d\mathcal{L}_{\nu}(Y)\leq 2^{-1}\left(\log2+\int_{G(1,d)} |\log(1 - \|P_W(x) \|^2)| d \nu(W)\right)=2^{-1}(\log2-C_1).$$
An identical computation also shows that $\int_{\mathcal{E}}\left|\log\|Z-Y\|\right|d\mathcal{L}_{\Gamma}(Y)<\infty.$

By Lemma \ref{pot-theory-lemma}, equation \eqref{energy-eta-nonzero2} implies that the signed measure $\eta$ is identically zero.  Thus $\mathcal{L}_{\nu} = \mathcal{L}_{\Gamma}$.  Since the correspondence between a measure
$\mu \in \mathcal{M}(G(1,d))$ and the measure $\mathcal{L}_\mu \in  \mathcal{M}(\mathcal{E})$ is bijective, it follows that $\nu = \Gamma$.  
This establishes part (2) of Theorem \ref{thm:unique}.\\
\qed

We conclude this section with some questions.
Theorem \ref{thm:unique} shows that the invariant measure $\Gamma_{k,d}$ is the unique minimizer of both \eqref{kacz-s-prob} with $0<s<1$ and \eqref{kacz-log-prob} 
for subspaces of dimension $k=1$, but it is not clear what happens for general values of $k$.
\begin{question}
Is the invariant measure $\Gamma_{k,d} \in \mathcal{M}(G(k,d))$ the unique minimizer of the problems \eqref{kacz-s-prob} with $0<s<1$ 
and \eqref{kacz-log-prob} for each $1 \leq k \leq d$?
\end{question}

Example \ref{non-unique-example} shows that when $s=1$ the minimizers of  \eqref{kacz-s-prob} are not unique.  
It would be interesting to understand the issue of uniqueness in \eqref{kacz-s-prob} when $s>1$.
\begin{question}
Is the invariant measure $\Gamma_{k,d} \in \mathcal{M}(G(k,d))$ the unique minimizer of the problem \eqref{kacz-s-prob} when $s>1$?
\end{question}

\section{Kaczmarz algorithm} \label{Kacz-section-at-end}
It is useful to briefly mention how the preceding results relate to the Kaczmarz algorithm \eqref{kacz-algorithm} for recovering $x\in \R^d$ from 
linear measurements $y_n = \langle x, \varphi_n \rangle, n \geq 1,$ 
when $\varphi_n \in \mathbb{S}^{d-1}$ are i.i.d. versions of a unit-norm random vector $\varphi \in \mathbb{S}^{d-1}$.

The random vector $\varphi \in \mathbb{S}^{d-1}$ induces a random subspace $W_\varphi \in G(1,d)$ by $W_\varphi={\rm span}(\varphi)$, 
and the Kacmarz bound $\alpha_s$ of order $s>0$ for $\varphi$ takes the following form, see Definition 2.3 in \cite{CP},
\begin{equation} \label{frame-kacz-bnd}
\alpha_s = \alpha_{s,\varphi}= \sup_{x \in \mathbb{S}^{d-1}} \left( \mathbb{E}[(1- | \langle x, \varphi \rangle|^2)^s] \right)^{1/s}
=\sup_{x \in \mathbb{S}^{d-1}} \left( \mathbb{E}[(1- \|P_{W_\varphi}(x)\|^2)^s] \right)^{1/s}.
\end{equation}
With slight abuse of notation, we say that the random vector $\varphi\in \mathbb{S}^{d-1}$ achieves minimal Kaczmarz bound for \eqref{frame-kacz-bnd}
if the probability measure $\mu_\varphi \in \mathcal{M}(G(1,d))$ associated to $W_\varphi$ is a minimizer of \eqref{kacz-s-prob}.
By Theorem 4.1 in \cite{CP}, the error for the Kaczmarz algorithm \eqref{kacz-algorithm} satisfies 
\begin{equation} \label{up-bound-in-kacz-section}
\left( \mathbb{E} \| x - x_n\|^{2s} \right)^{1/s} \leq \alpha_s^n \| x - x_0\|^{2}.
\end{equation}
Those $\varphi$ with minimal Kaczmarz bound give best control on the upper bound in \eqref{up-bound-in-kacz-section}.

For any fixed $s>0$, it follows from Corollary \ref{cor_argmin} that if $\varphi$ is uniformly distributed on $\mathbb{S}^{d-1},$ 
then $\varphi$ achieves the minimal Kaczmarz bound.
Moreover, when $0<s<1,$ Theorem \ref{thm:unique} shows that $\varphi$ achieves minimal Kaczmarz bound
only when $W_\varphi$ is distributed according to the invariant measure on $G(1,d)$.
For example, if $\psi$ is uniformly distributed on a hemisphere of $\mathbb{S}^{d-1},$ and $\varphi$ is uniformly distributed on $\mathbb{S}^{d-1}$,
then both $\varphi$ and $\psi$ achieve the minimal Kaczmarz bound since $W_\psi=W_\varphi$ are both distributed according to the invariant measure on $G(1,d)$.

\section{Numerical examples} \label{numerics-sec}

The numerical examples in this section plot error moments $\left( \mathbb{E}\|x - x_n\|^{2s} \right)^{1/s}$ of the subspace action 
algorithm \eqref{ff-kacz} for different random subspace distributions and different values of $s\geq 0$ (where
${\rm exp} (\mathbb{E} \log \|x - x_n\|^2)$ corresponds to $s=0$).
The expectations are approximated by averaging over 3000 trials in the first three examples, and by averaging over 9000 trials in the final example.  
All examples are done in real space $\R^d$, and the algorithm \eqref{ff-kacz} is implemented with the initial estimate $x_0=0$.

\begin{example}  \label{numexample-roots-unif}
Figure \ref{fig:dis} compares error moments for 
three different types of one-dimensional random subspaces in $\R^2$.  Define the $K$th roots of unity
frame $\Phi_K = \{\varphi^K_k\}_{k=1}^K \subset \R^2$ by
\begin{equation*}
\forall 1 \leq k \leq K, \ \ \ \varphi^K_k = ( \cos(2 \pi k/K) , \sin(2 \pi k/K) ).
\end{equation*}
It is well known that $\Phi_K$ is a unit-norm tight frame for $\R^2$ when $K \geq 3$, e.g., \cite{BPY}.  

When $K=3$ and $K=5$ we consider the random subspace distributions that are defined as in Example \ref{non-unique-example} 
by randomly selecting at uniform the span of an element in $\Phi_K$.  We also consider the invariant distribution $\Gamma_{1,2}$.  

Note that the Kaczmarz bounds of $\Gamma_{1,2}$ can  be explicitly computed when $s=2,1,1/2,0$.  
When $s=0$, Example 5.6 in \cite{CP} shows that $\alpha_{\log} = 1/4$.  For $s>0$ and $u\in \mathbb{S}^{d-1}$,
$$\left( \mathbb{E} (1 - \|P_W(u)\|^2)^s \right)^{1/s} = \left( \frac{1}{2\pi} \int_0^{2\pi} |\sin \theta|^{2s}  \ d \theta \right)^{1/s},$$
so that $\alpha_2 =\sqrt{3/8}$, $\alpha_1=1/2$, and $\alpha_{1/2}=(2/\pi)^2$.

The subplots in Figure \ref{fig:dis} show error moments for the different values of $s=2,1,1/2,0$ when $x=(0.2296, 0.9361)$.
Each subplot plots error moments
for each of the three random subspace distributions considered (randomized 3rd roots of unity, randomized 5th roots of unity, and the invariant
distribution), and for comparison also plots the theoretical upper bounds $\alpha_s^n$ from 
Theorem \ref{moment-err-thm} and Theorem \ref{log-moment-err-thm}.
Note that the uniform distribution has the smallest error among these three distribution and roughly coincides with $\alpha_s^n$.
In the case $s=1$, all plots are very close to each other, which agrees with the fact
that all three distributions have the same tight Kaczmarz bound $\alpha_1 = 1/2$.
\end{example}

\begin{example}\label{exa:fds}
Figure \ref{fig:fdis}  compares  error moments for three different types of two-dimensional random subspaces in $\R^5$. 
Draw 5 two-dimensional subspaces $W^5_n \subset \R^5, 1 \leq n \leq 5,$ independently at random according to the uniform distribution on $G(2,5)$.  
The resulting subspaces are now a deterministic collection.
Let $U$ be the random subspace defined by
$$\forall \thinspace 1 \leq n \leq 5, \ \ \ \Pr [ U = W^5_n ] = 1/5.$$
This will be referred to as the randomized 5 subspace distribution in this example. 
Similarly, draw 8 two-dimensional subspaces $W^8_n \subset \R^5, 1 \leq n \leq 8,$ independently at random according to the uniform distribution on $G(2,5)$,
and let $V$ be the random subspace defined by
$$\forall \thinspace 1 \leq n \leq 8, \ \ \ \Pr [ V = W^8_n ] = 1/8.$$
This will be referred to as the randomized 8 subspace distribution in this example.  See \cite{B13} for results concerning approximate tightness of randomly drawn fusion frames.

The subplots in Figure \ref{fig:fdis} show error moments for the different values of $s=2,1,1/2,0$ for the randomized 5 subspace distribution, the randomized 8 subspace distribution, and the invariant distribution $\Gamma_{2,5}$.  The signal $x$ was taken to be $x=(1,1,1,1,1)$.  In this experiment, the 8 subspace distribution outperforms the 5 subspace distribution, but the invariant distribution has the smallest error for each of the four values of $s$.
\end{example}

\begin{example}  \label{numexample-RONB}
Figure \ref{fig:ONBU} illustrates Example \ref{exa:onb} in $\R^{100}$, by comparing the invariant distribution $\Gamma_{1,100}$
with the randomized orthonormal basis distribution (RONB) defined by \eqref{onb-kacz-bnd-example-defeq}.
The figure compares error moments when $s=2,1,1/2$.  The signal $x$ was taken to be $x=(1,1,1,\cdots,1)$.
As expected from Example \ref{exa:onb}, the RONB distribution outperforms the invariant distribution when $s=1/2$ even though the invariant
distribution has a smaller Kaczmarz bound of order $1/2$.
When $s=1$ both distributions appear to give roughly the same error; this is consistent with the fact that both distributions have tight Kaczmarz bounds when $s=1$, cf. Corollary \ref{cor_argmin} and Example \ref{non-unique-example}.
Finally, note that the invariant distribution outperforms the RONB distribution when $s=2$.
\end{example}

\begin{example}\label{exa:fonb}
Figure \ref{fig:fONBU} illustrates an example that is similar to Example \ref{numexample-RONB}, but with four-dimensional subspaces in $\R^{100}$. 
Let $\{e_n\}_{n=1}^{100}$ be the canonical basis for $\R^{100}$, and for $1 \leq i \leq 25$, let $W_i = {\rm span} \{e_n\}_{n=4i-3}^{4i}$.
Note that $\{W_i\}_{i=1}^{25}$ is a tight fusion frame for $\R^{100}$ (with weights $v_i=1$).  Let $W$ be the random  four-dimensional subspace
defined by $\Pr[W = W_i] = 1/25$, and refer to this as the ONB distribution.  
The signal $x$ was taken to be $x=(1,1,1,\cdots,1)$.
Figure \ref{fig:fONBU} compares the ONB distribution with the invariant distribution $\Gamma_{4,100}$.
Similar to Example \ref{numexample-RONB}, the ONB distribution has smaller error moments than the invariant distribution when $0<s<1$ (even though the
invariant distribution uniquely achieves minimal Kaczmarz bounds when $0<s<1$).
\end{example}

\section*{Appendix}
\begin{proof}[Proof of Theorem \ref{thm:noise}] 
Since the random subspaces $\{W_j\}_{j=1}^n$ are independent, it follows that $W_n$ is independent of the random vector $x_{n-1}^*$.
Let $y_n=P_{W_n}(x)$ and $z_n = x_{n-1}^*+y_n-P_{W_n}(x_{n-1}^*)$.
It follows from Theorem \ref{moment-err-thm} (thinking of $x_1=z_n$ and $x_0=x^*_{n-1}$) that for all $s>0$
\begin{equation}\label{equ:normal}
\left( \E_{W_n}\|z_n-x\|^{2s} \right)^{1/s} \leq \alpha_s\|x_{n-1}^*-x\|^{2},
\end{equation}
where the expectation in \eqref{equ:normal} is with respect to $W_n$ only.

From the definition of $x^*_n$ we have $x_n^*=x_{n-1}^*+y_n+\epsilon_n-P_{W_n}(x_{n-1}^*)=z_n+\epsilon_n$, and hence
$x^*_n - x=z_n - x +\epsilon_n$.  Since $z_n-x=P_{W_n^{\perp}}(x_{n-1}^*-x)\in W_n^{\perp}$ and $\epsilon_n\in W_n$, it follows that 
\begin{equation} \label{pythag-eq}
\|x_n^*-x\|^2=\|z_n-x\|^2+\|\epsilon_n\|^2.
\end{equation}

{\em Case 1.} If $0<s\leq 1$ then \eqref{pythag-eq} gives
\begin{equation}\label{equ:s}
\|x_n^*-x\|^{2s}\leq\|z_n-x\|^{2s}+\|\epsilon_n\|^{2s}.
\end{equation} 
Since $\{W_j\}_{j=1}^n$ are independent, the expectation $\mathbb{E}\|x_n^*-x\|^{2s}$ can be written
as an iterated expectation $\mathbb{E}_{W_1, \cdots, W_{n-1}} \mathbb{E}_{W_n} \|x_n^*-x\|^{2s}$.
Combining  \eqref{equ:normal} and \eqref{equ:s} gives that 
\begin{align}
\E \|x_n^*-x\|^{2s} 
&= \E_{W_1, \cdots, W_{n-1}} \mathbb{E}_{W_n} \|x_n^*-x\|^{2s} \notag \\
& \leq \E_{W_1, \cdots, W_{n-1}}  \mathbb{E}_{W_n} \left( \|z_n-x\|^{2s} + \|\epsilon_n\|^{2s} \right)  \notag \\
& \leq \E_{W_1, \cdots, W_{n-1}} \left( \alpha_s^s \|x_{n-1}^*-x\|^{2s} + \epsilon^{2s}\right)  \notag \\
& = \alpha_s^s \thinspace \mathbb{E} \|x_{n-1}^*-x\|^{2s}+\epsilon^{2s}. \label{case1-iter-eq}
\end{align}
Iterating \eqref{case1-iter-eq} yields
\begin{align*}
\E\|x_n^*-x\|^{2s}&\leq \alpha_s^{ns}\|x^*_0-x\|^{2s}+\epsilon^{2s}\sum_{j=0}^{n-1}\alpha_s^{sj}
\leq \alpha_s^{ns}\|x^*_0-x\|^{2s}+\frac{1}{1-\alpha_s^s}\epsilon^{2s}.
\end{align*}

{\em Case 2.} If $s \geq 1$ then by \eqref{equ:normal}, \eqref{pythag-eq}, and Minkowski's inequality,
\begin{align}
(\E \|x_n^*-x\|^{2s} )^{1/s} 
&= \left( \E[ (\|z_n-x\|^{2}+\|\epsilon_n\|^{2})^s ]\right)^{1/s} \notag \\
&\leq \left( \E \|z_n-x\|^{2s} \right)^{1/s} + \left( \E \|\epsilon_n\|^{2s}\right)^{1/s} \notag \\
&= \left( \E_{W_1, \cdots, W_{n-1}} \mathbb{E}_{W_n} \|z_n-x\|^{2s} \right)^{1/s} + \epsilon^2 \notag \\
& \leq \left( \E_{W_1, \cdots, W_{n-1}}(\alpha_s^s \|x^*_{n-1}  - x\|^{2s}) \right)^{1/s} + \epsilon^2 \notag \\
&= \alpha_s \left( \E\|x^*_{n-1}  - x\|^{2s}\right)^{1/s} + \epsilon^2. \label{case2-iter-eq}
\end{align}
Iterating \eqref{case2-iter-eq} yields
$$(\E \|x_n^*-x\|^{2s} )^{1/s} \leq \alpha_s^n \|x_{0}^*-x\|^{2}  + \frac{1}{1-\alpha_s} \epsilon^2.$$

\end{proof}

\begin{proof}[Proof of Theorem \ref{log-moment-err-thm}]
Let $\epsilon>0$ be arbitrary.  Using Lemma \ref{log-lim-lemma}, for every $x\in \mathbb{S}^{d-1}$
there exists $s_x>0$ such that 
\begin{equation*}
\left( \mathbb{E} ( 1 - \| P_W(x)\|^2)^{s_x} ) \right)^{1/s_x} \leq \alpha_{\log}+\epsilon.
\end{equation*}
By the Lebesgue dominated convergence theorem, for every $x \in \mathbb{S}^{d-1}$
\begin{equation*}
\lim_{\|y\|=1, y \to x} \left( \mathbb{E} ( 1 - \| P_W(y)\|^2)^{s_x} ) \right)^{1/s_x}
= \left( \mathbb{E} ( 1 - \| P_W(x)\|^2)^{s_x} ) \right)^{1/s_x} \leq \alpha_{\log} + \epsilon.
\end{equation*}
Thus for every $x\in \mathbb{S}^{d-1}$ there exists an open neighborhood $U_x \subset \mathbb{S}^{d-1}$ about $x$ such that
\begin{equation*}
\forall y \in U_x, \ \ \ 
\left( \mathbb{E} ( 1 - \| P_W(y)\|^2)^{s_x} ) \right)^{1/s_x} \leq \alpha_{\log} + 2\epsilon.
\end{equation*}
By compactness there exists a finite set $\{x_n\}_{n=1}^N \subset \mathbb{S}^{d-1}$ such that $\cup_{n=1}^N U_{x_n}$ covers $\mathbb{S}^{d-1}$.  Letting
$s^*_{\epsilon} = \min\{s_{x_n}\}_{n=1}^N >0$ and using \eqref{lyap-ineq} gives
\begin{equation*}
\forall x \in \mathbb{S}^{d-1}, \ \ \ 
\left( \mathbb{E} ( 1 - \| P_W(x)\|^2)^{s^*_\epsilon} ) \right)^{1/s^*_{\epsilon}} \leq \alpha_{\log} + 2\epsilon.
\end{equation*}
Thus by Lemma \ref{log-lim-lemma} and Theorem \ref{moment-err-thm}
\begin{equation} \label{plus-eps-bnd}
{\rm exp} \left( \mathbb{E}\log \|x-x_n\|^2 \right) \leq 
\left( \mathbb{E}\|x-x_n\|^{2s^*_{\epsilon}} \right)^{1/s^*_{\epsilon}} 
\leq \left( \alpha_{\log} + 2\epsilon \right)^{n} \|x - x_0\|^2.
\end{equation}
Since $\epsilon>0$ is arbitrary, \eqref{plus-eps-bnd} yields \eqref{log-moment-err-bnd} as required.
\end{proof}

\section*{Acknowledgements}

The authors thank Jameson Cahill for valuable suggestions in the proof of Theorem \ref{thm:unique}.
The authors also thank Pete Casazza, Doug Hardin, Jiayi Jiang, Michael Northington and Ed Saff for helpful conversations
related to the material.  The authors were supported in part by 
NSF DMS Grant 1211687.  A.~Powell gratefully acknowledges the hospitality and support
of the Academia Sinica Institute of Mathematics (Taipei, Taiwan) and the City University of Hong Kong.

\begin{figure}[b]
        \centering
        \begin{subfigure}[b]{0.5\textwidth}
                \centering
                \includegraphics[width=\textwidth]{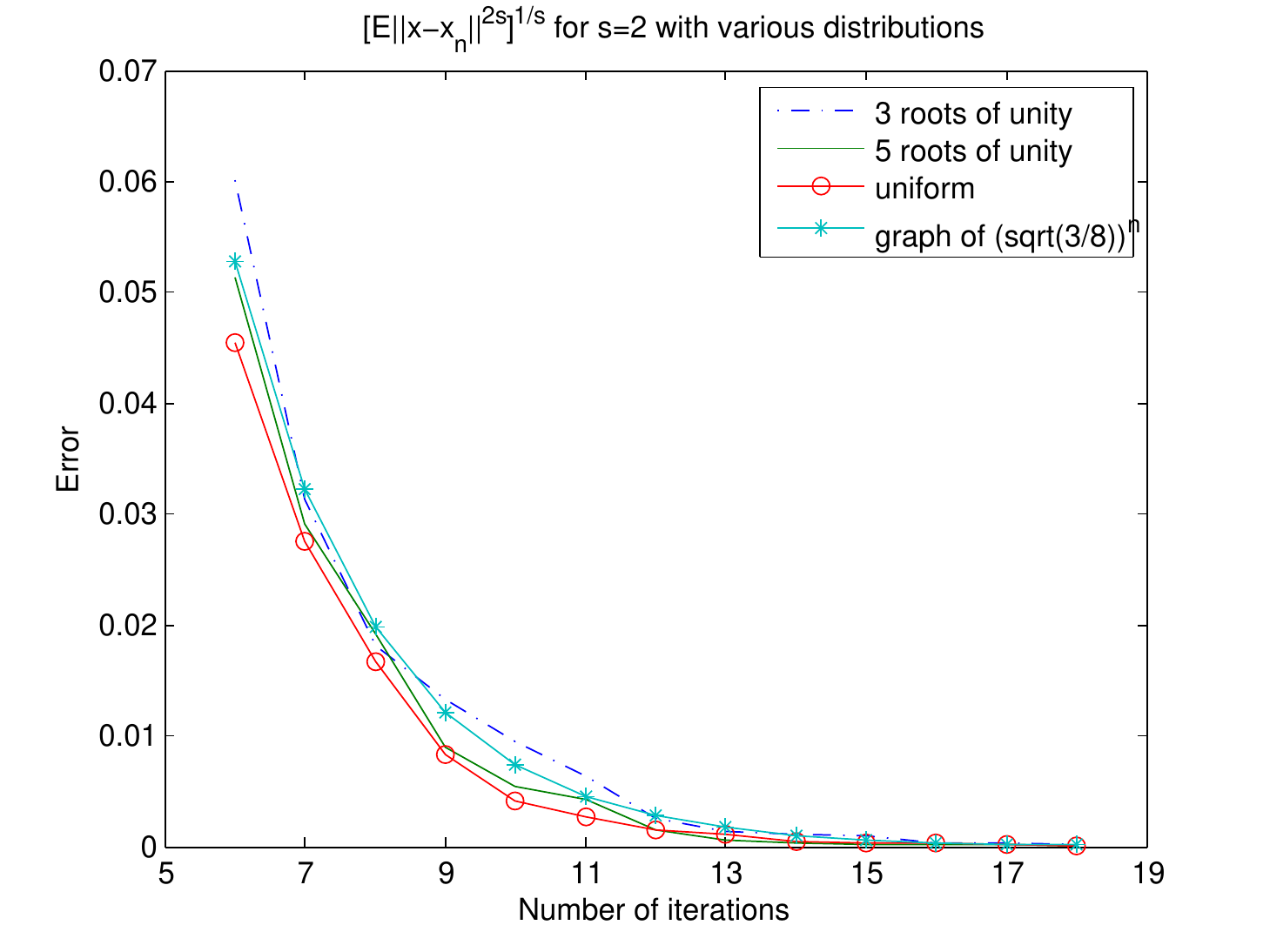}
               
                \label{fig:s2}
        \end{subfigure}
        ~\begin{subfigure}[b]{0.5\textwidth}
                \centering
                \includegraphics[width=\textwidth]{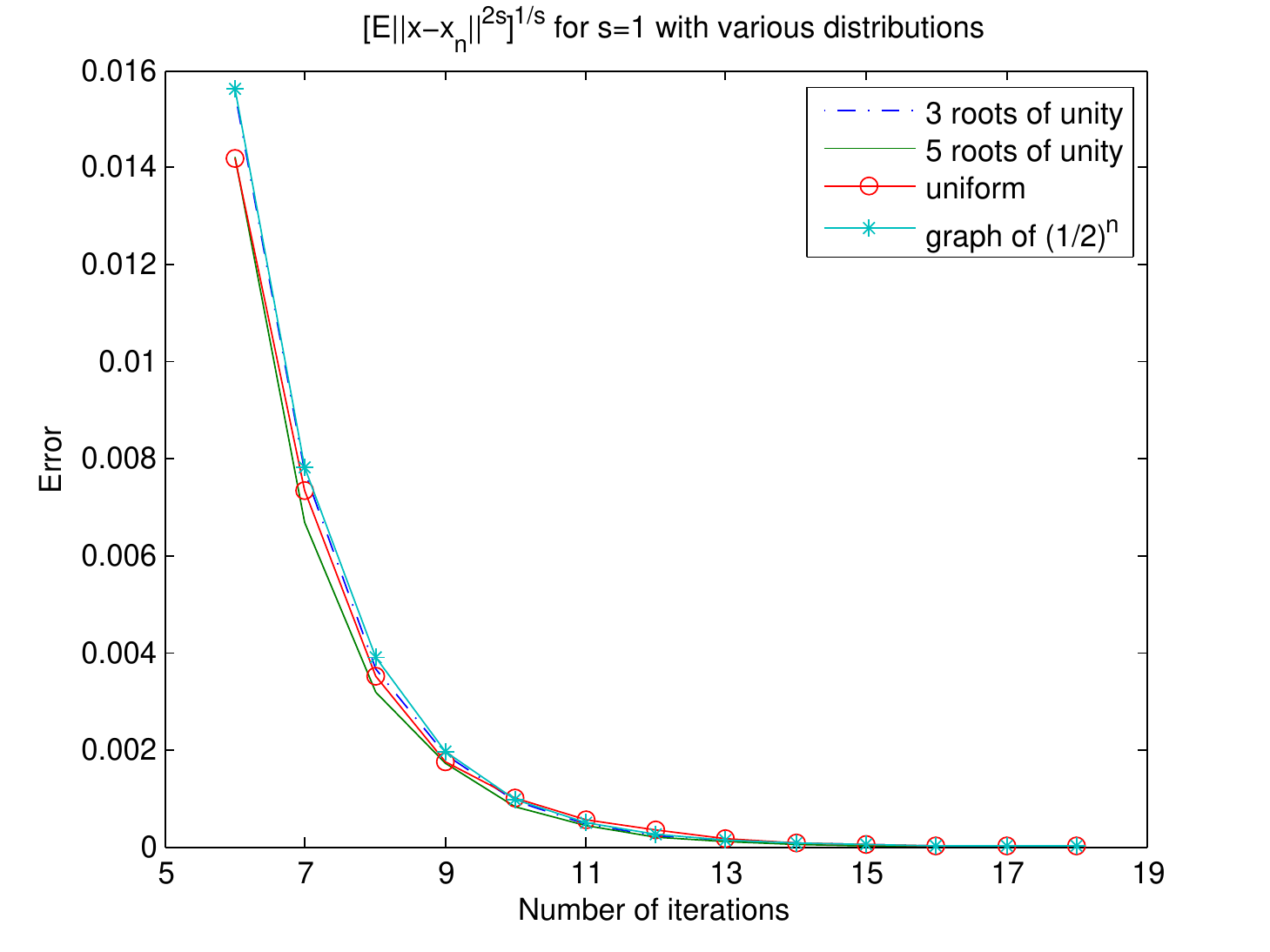}
                
                \label{fig:s1}
        \end{subfigure}
       
        \begin{subfigure}[b]{0.5\textwidth}
                \centering                \includegraphics[width=\textwidth]{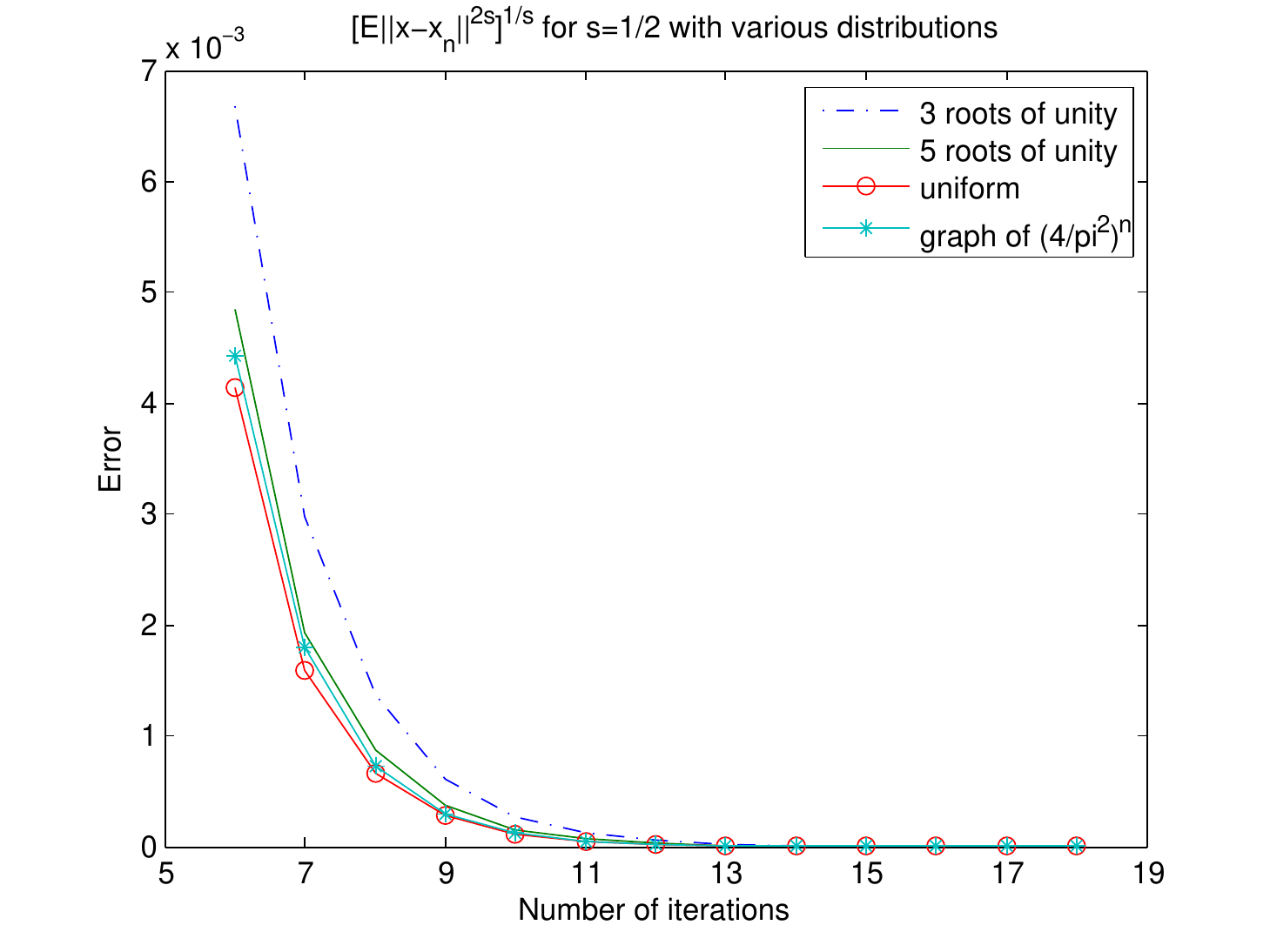}
                
                \label{fig:s05}
        \end{subfigure}
        ~\begin{subfigure}[b]{0.5\textwidth}
                \centering
                \includegraphics[width=\textwidth]{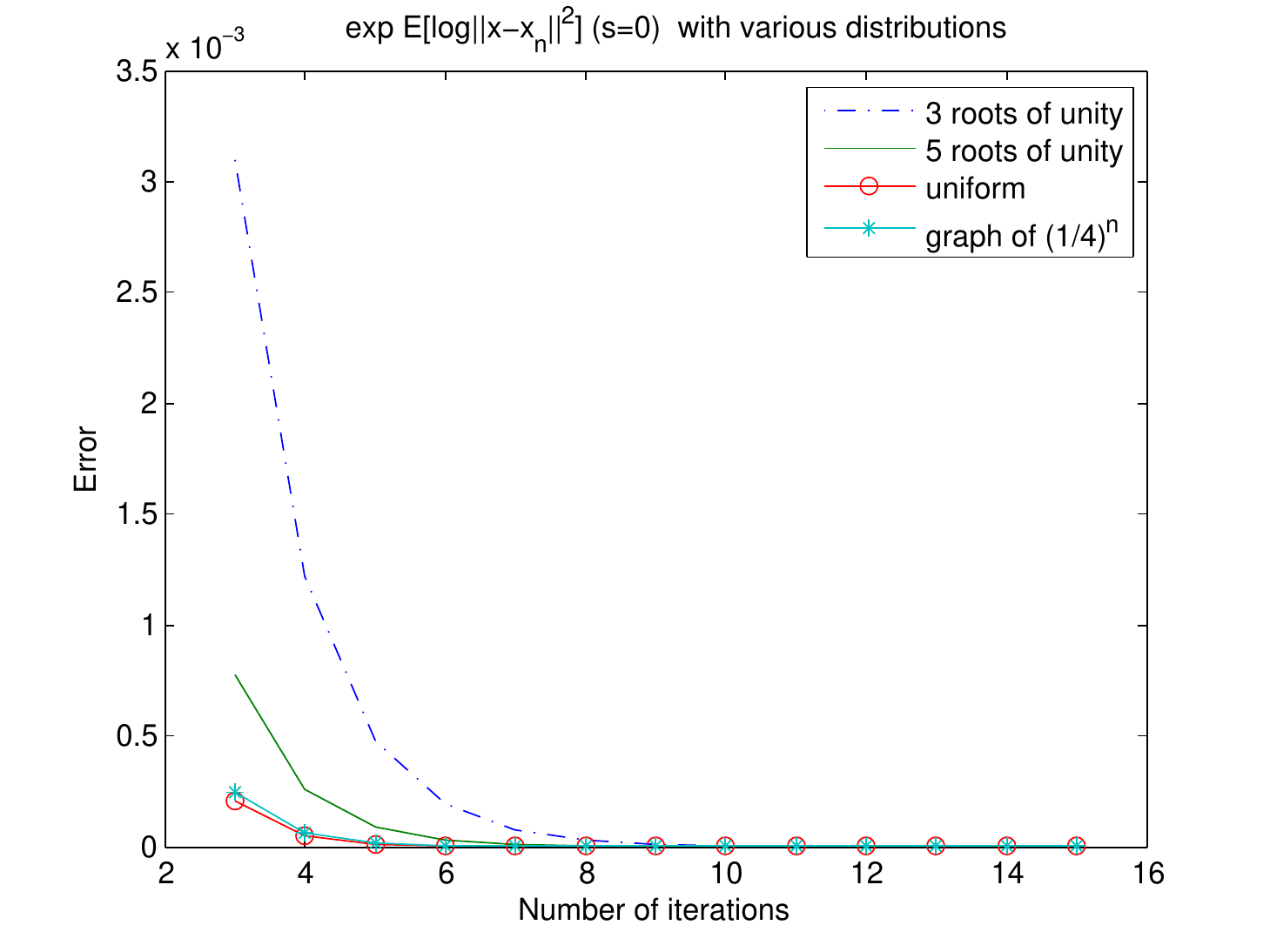}
                
                \label{fig:s0}
        \end{subfigure}
        \caption{Error moments for the invariant distribution $\Gamma_{1,2}$, the randomized 3rd roots of unity distribution, and the randomized 5th roots of unity distribution, along with the Kaczmarz upper bounds for $\Gamma_{1,2}$, see Example \ref{numexample-roots-unif}.}\label{fig:dis}
\end{figure}

\begin{figure}[b]
        \centering
        \begin{subfigure}[b]{0.5\textwidth}
                \centering
                \includegraphics[width=\textwidth]{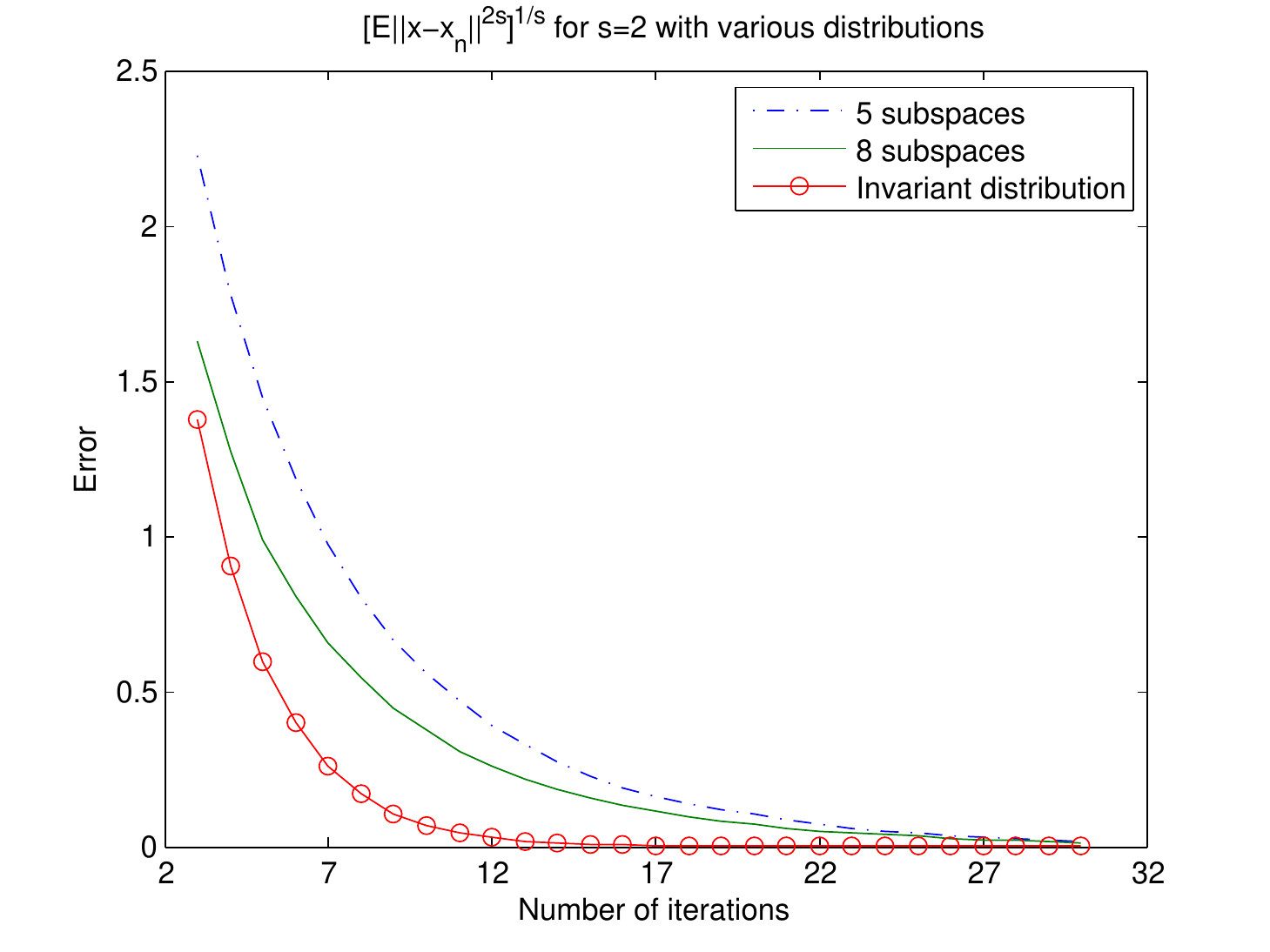}
               
                \label{fig:s2}
        \end{subfigure}
        ~\begin{subfigure}[b]{0.5\textwidth}
                \centering
                \includegraphics[width=\textwidth]{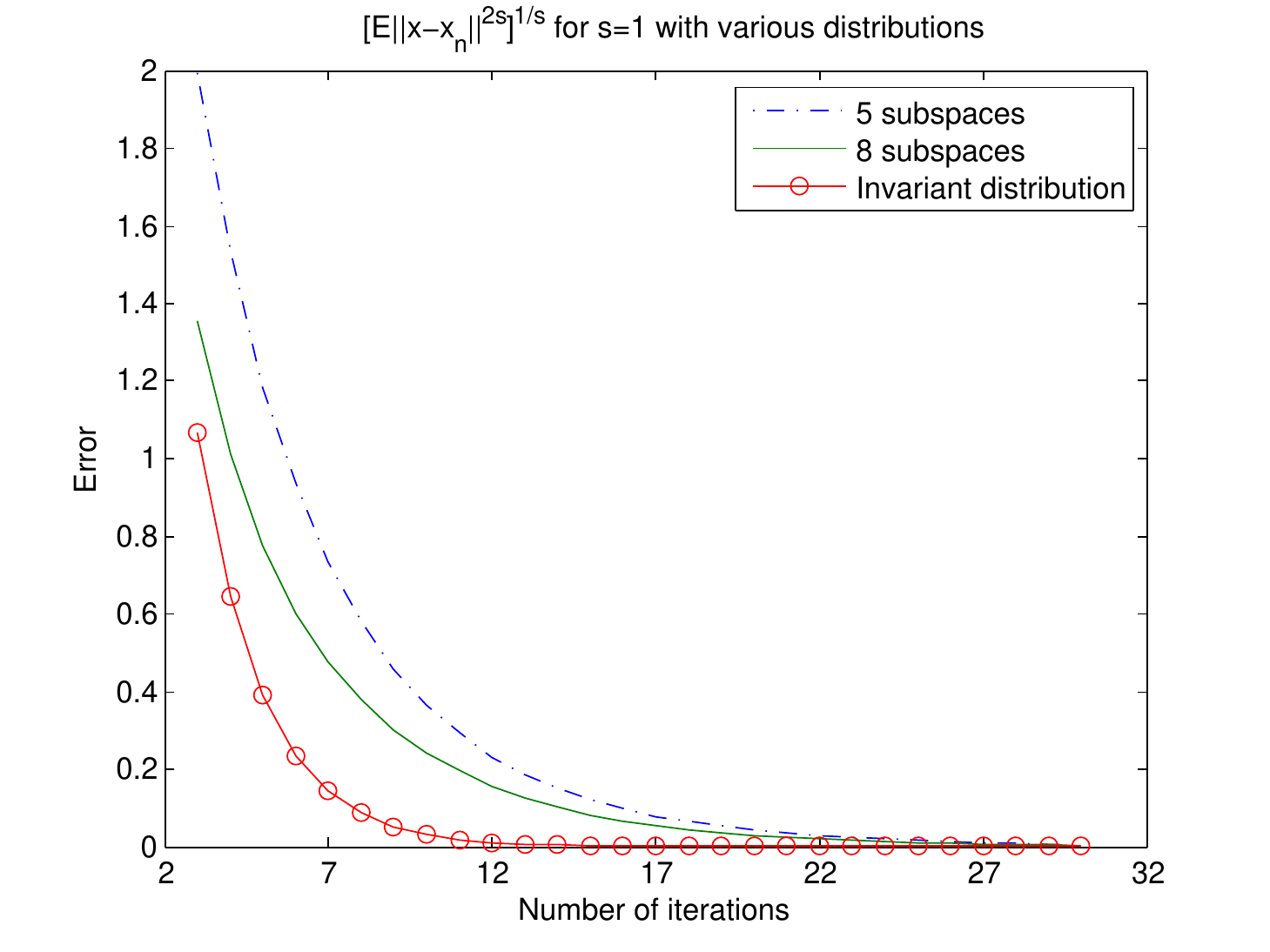}
                
                \label{fig:s1}
        \end{subfigure}
       
        \begin{subfigure}[b]{0.5\textwidth}
                \centering                \includegraphics[width=\textwidth]{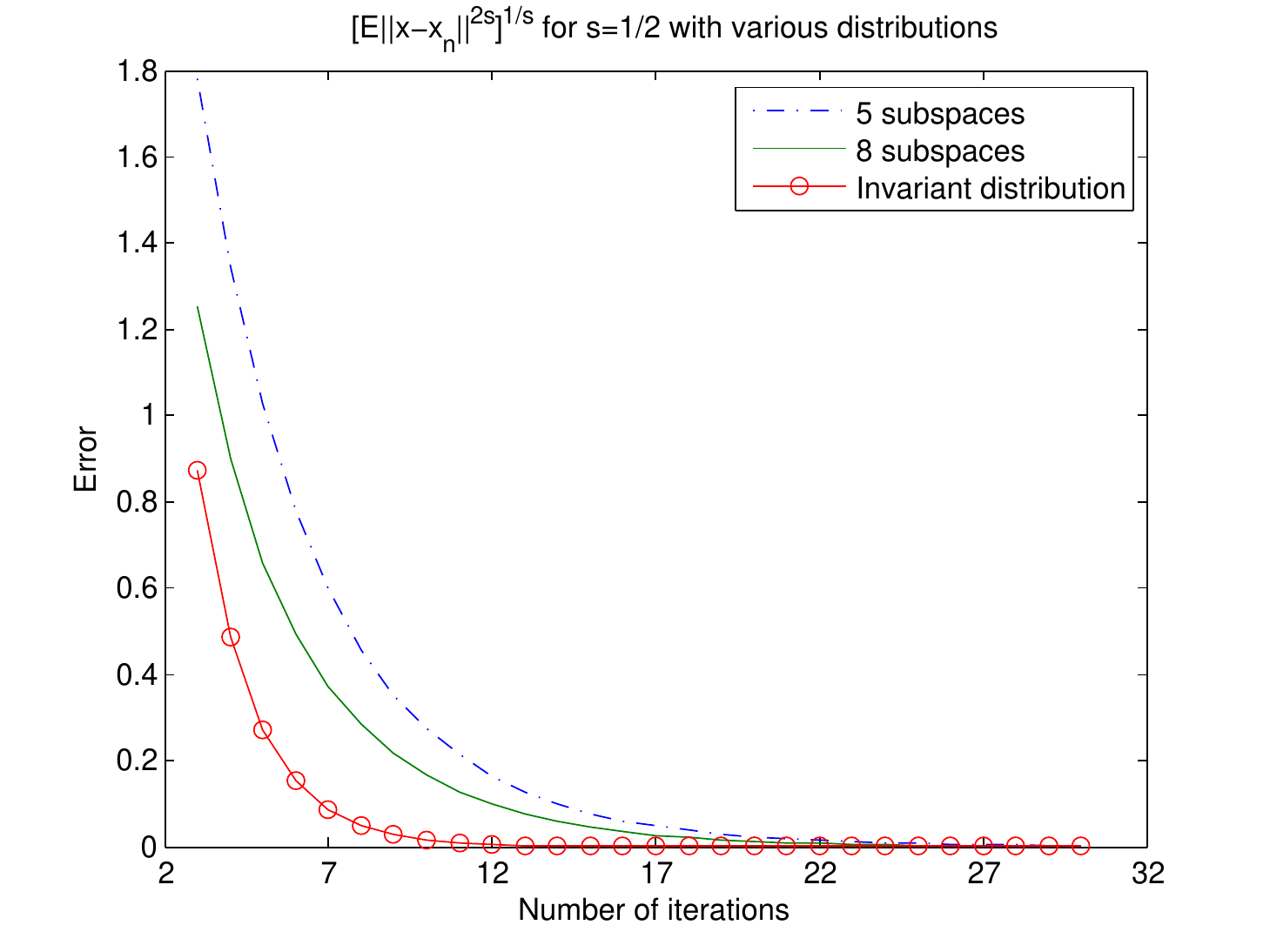}
                
                \label{fig:s05}
        \end{subfigure}
        ~\begin{subfigure}[b]{0.5\textwidth}
                \centering
                \includegraphics[width=\textwidth]{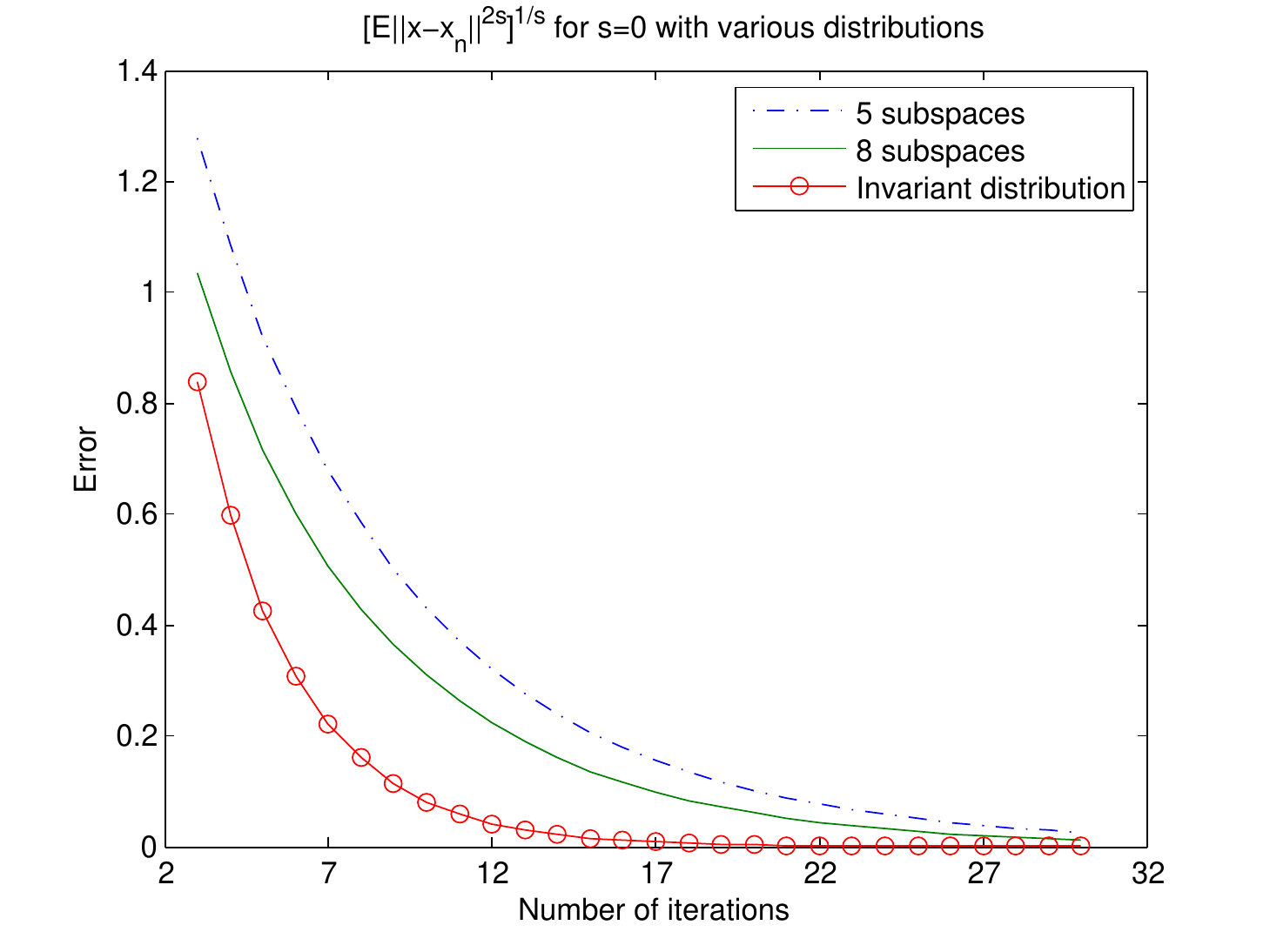}
                
                \label{fig:s0}
        \end{subfigure}
        \caption{Error moments for the invariant distribution $\Gamma_{2,5}$, the randomized 5 subspace distribution, and the randomized 8 subspace distribution, see Example \ref{exa:fds}.}\label{fig:fdis}
\end{figure}

\begin{figure}[b]
        \centering
        \begin{subfigure}[b]{0.33\textwidth}
                \centering
                \includegraphics[width=\textwidth]{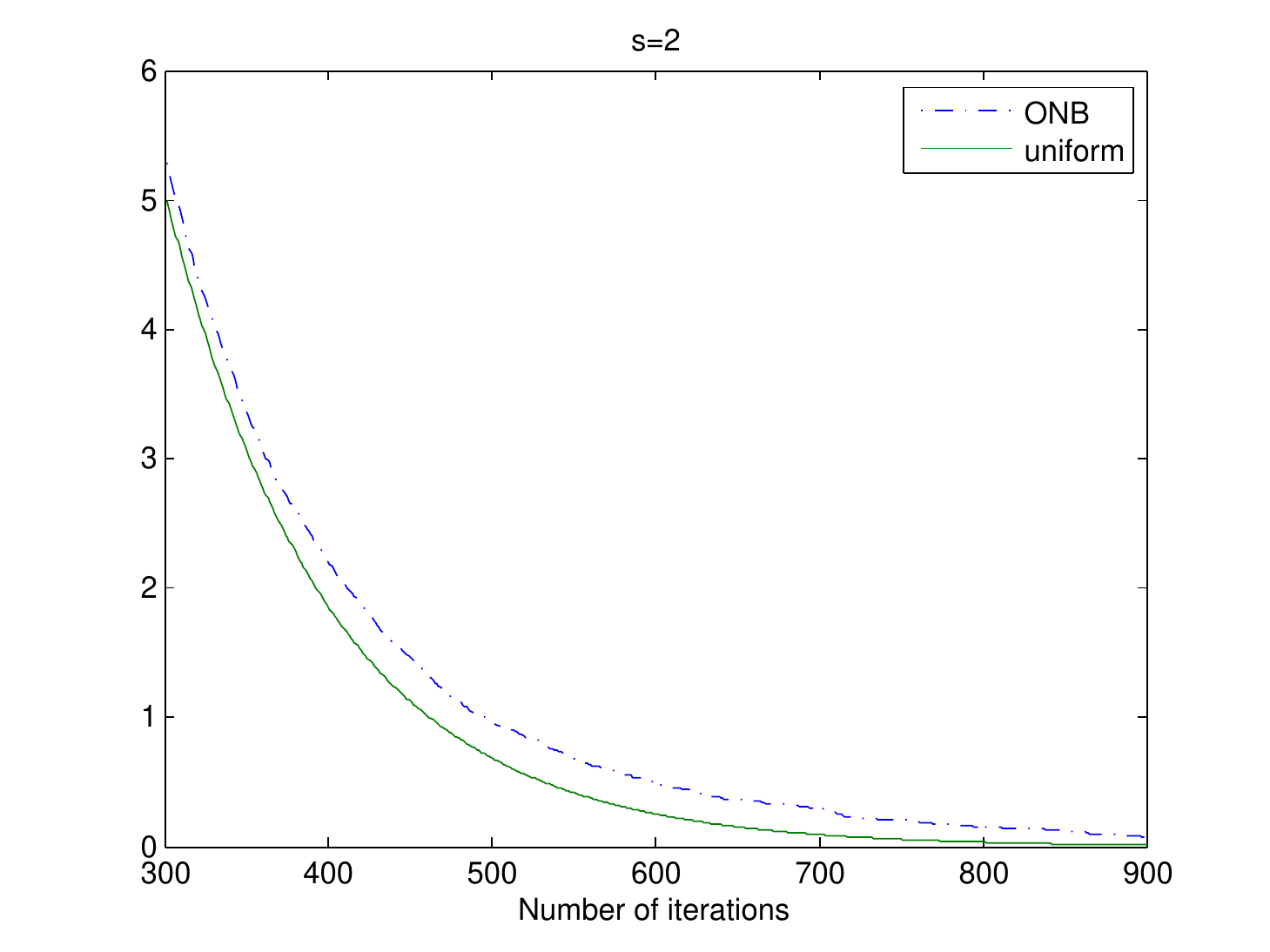}
                
                \label{fig:s2}
        \end{subfigure}
        ~\begin{subfigure}[b]{0.33\textwidth}
                \centering
                \includegraphics[width=\textwidth]{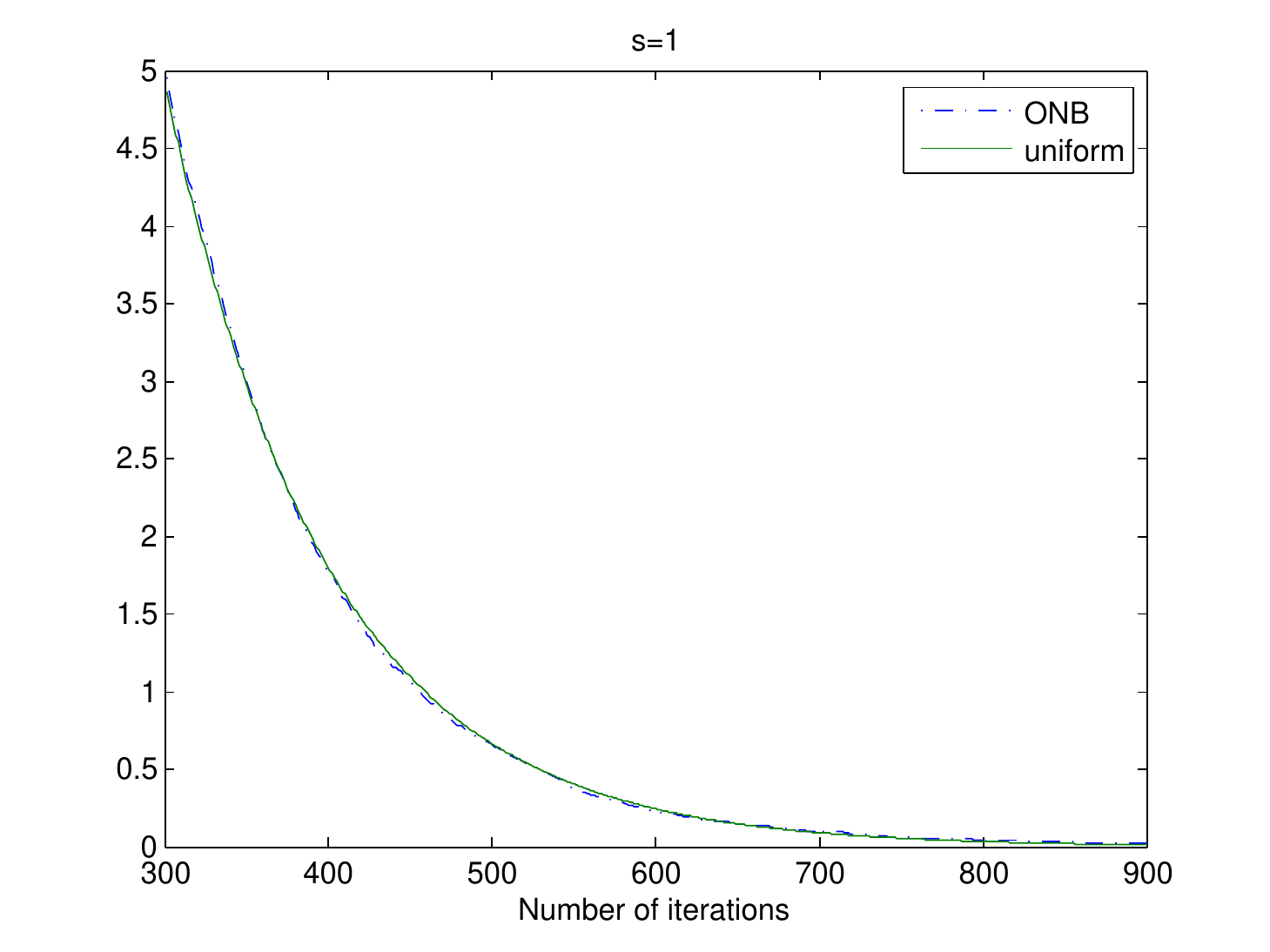}
                
                \label{fig:s1}
        \end{subfigure}
        ~\begin{subfigure}[b]{0.33\textwidth}
                \includegraphics[width=\textwidth]{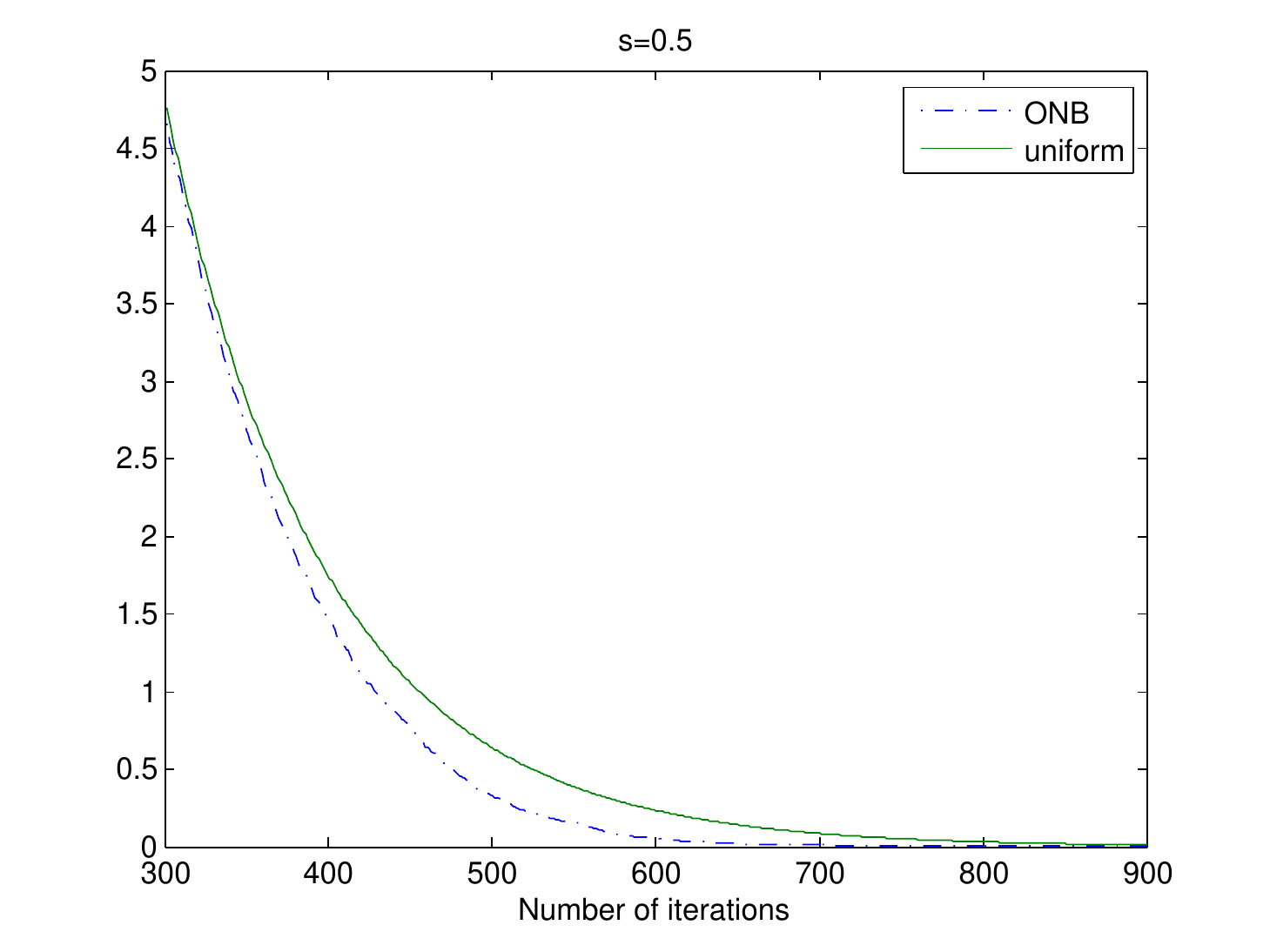}
         \end{subfigure}
       
        \caption{Error moments for the invariant distribution $\Gamma_{1,100}$ and the RONB distribution, see Example \ref{numexample-RONB}.}\label{fig:ONBU}
\end{figure}

\begin{figure}[b]
        \centering
        \begin{subfigure}[b]{0.33\textwidth}
                \centering
                \includegraphics[width=\textwidth]{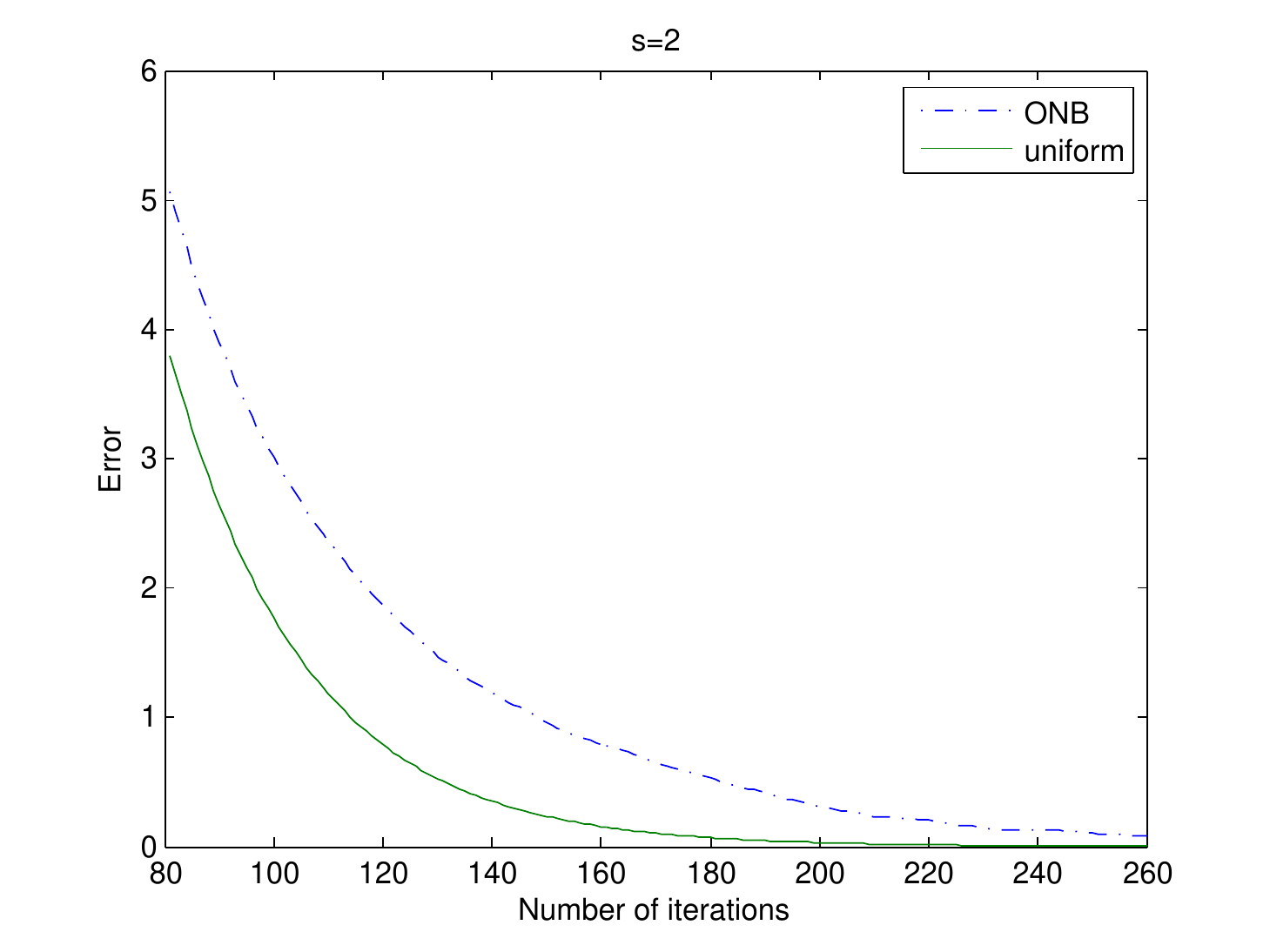}
                
                \label{fig:s2}
        \end{subfigure}
        ~\begin{subfigure}[b]{0.33\textwidth}
                \centering
                \includegraphics[width=\textwidth]{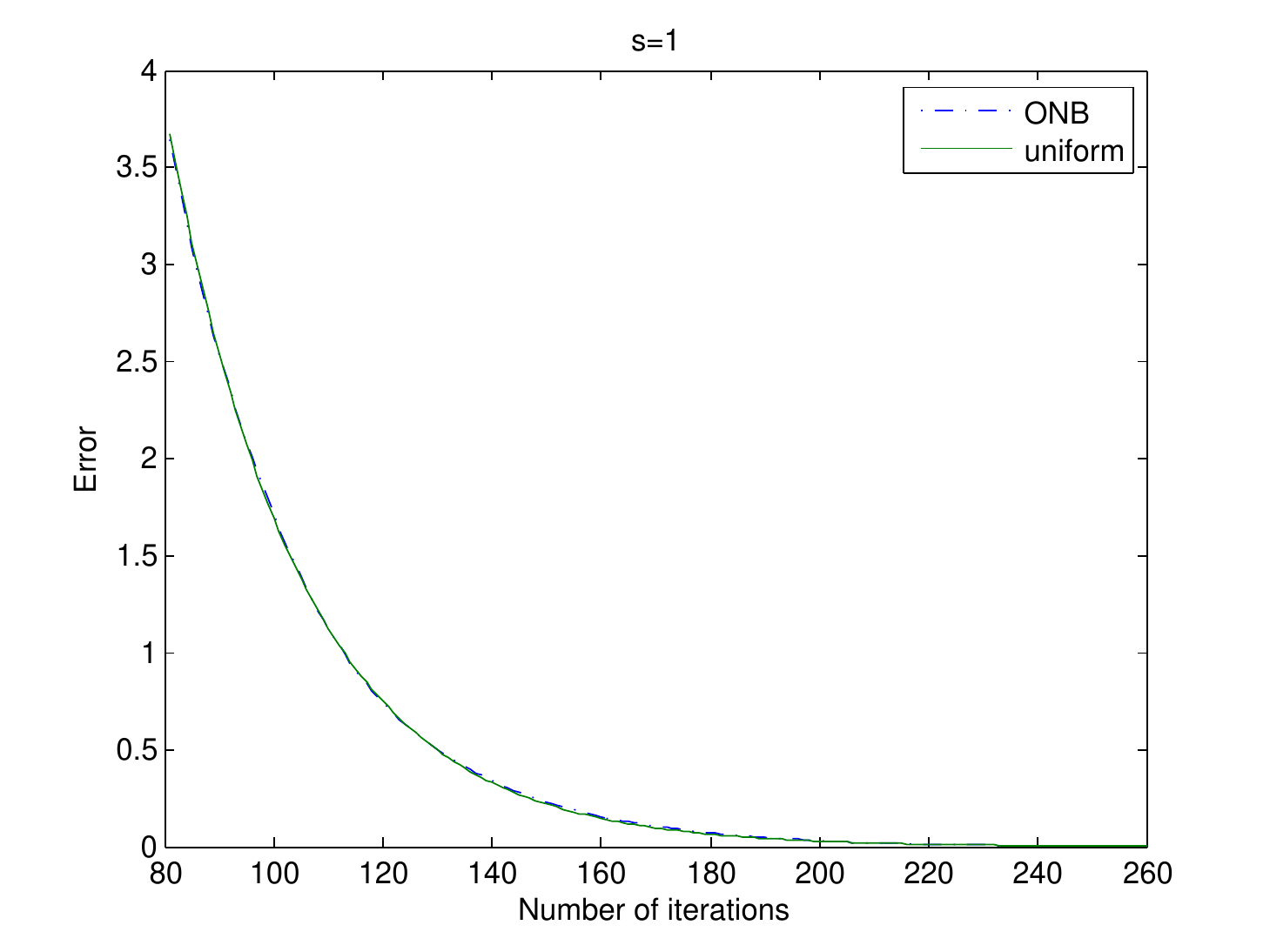}
                
                \label{fig:s1}
        \end{subfigure}
        ~\begin{subfigure}[b]{0.33\textwidth}
                \includegraphics[width=\textwidth]{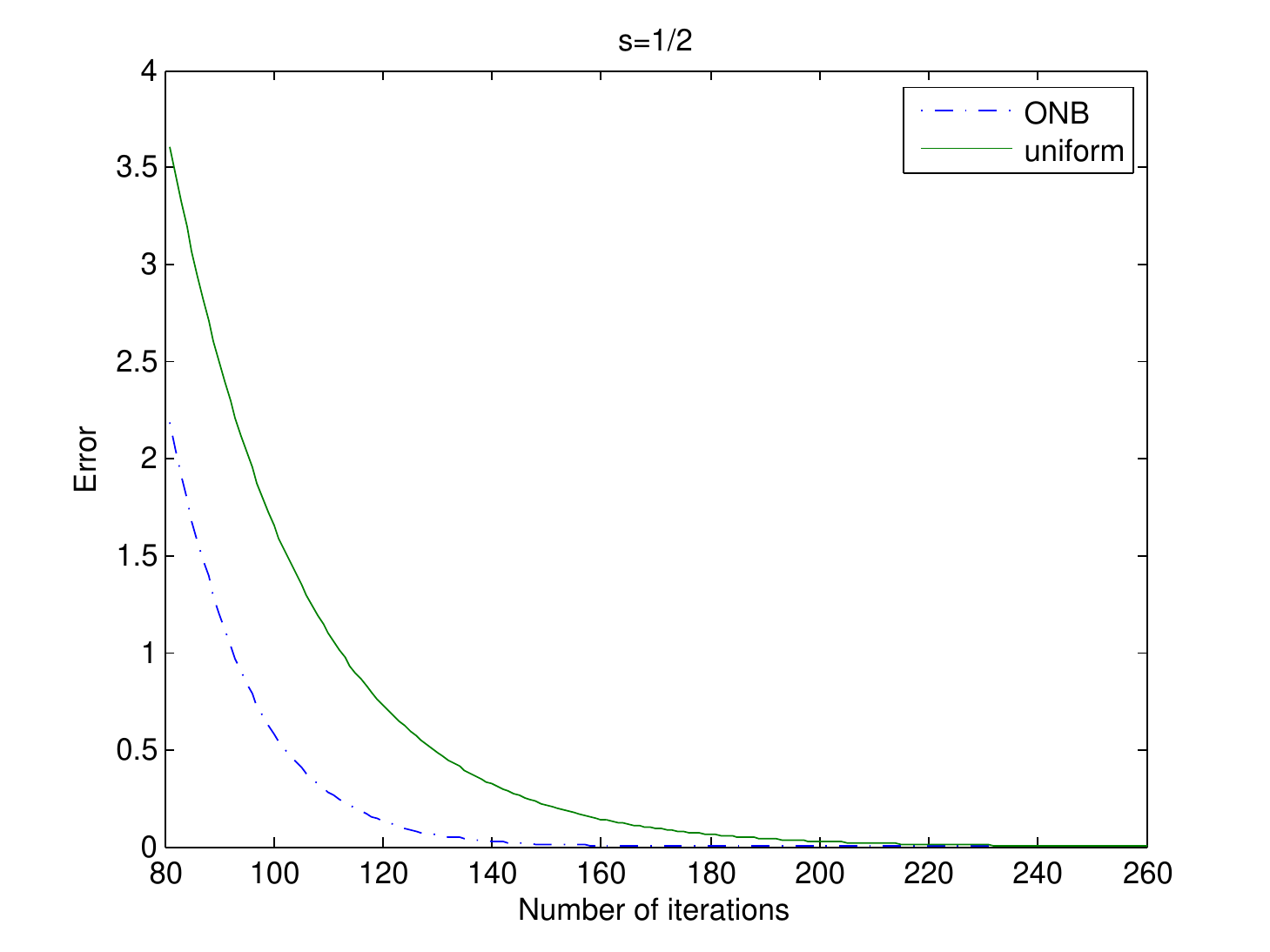}
         \end{subfigure}
       
        \caption{Error moments for the invariant distribution $\Gamma_{4,100}$ and the fusion frame ONB distribution, see Example \ref{exa:fonb}.}\label{fig:fONBU}
\end{figure}

\end{document}